\theoremstyle{plain}
\newtheorem{athm}{Theorem}
\newtheorem{sublemma}{Sublemma}[section]
\newtheorem{theorem}{Theorem}[section]
\newtheorem{proposition}[theorem]{Proposition}
\newtheorem{corollary}[theorem]{Corollary}
\newtheorem{lemma}[theorem]{Lemma}
\theoremstyle{definition}
\newtheorem{example}{Example}[section]
\newtheorem{definition}[theorem]{Definition}
\newtheorem{remark}[theorem]{Remark}
\DeclareMathOperator{\esssup}{ess \ sup}
\DeclareMathOperator{\e}{e}
\DeclareMathOperator{\id}{id}
\DeclareMathOperator{\inte}{int}
\chardef\@x10\chardef\@xv60
\def\tcitime{
\def\@time{%
  \@minute\time\@hour\@minute\divide\@hour\@xv
  \ifnum\@hour<\@x 0\fi\the\@hour:%
  \multiply\@hour\@xv\advance\@minute-\@hour
  \ifnum\@minute<\@x 0\fi\the\@minute
  }}%
\def\x@hyperref#1#2#3{%
   \catcode`\~ = 12
   \catcode`\$ = 12
   \catcode`\_ = 12
   \catcode`\# = 12
   \catcode`\& = 12
   \y@hyperref{#1}{#2}{#3}%
}
\def\y@hyperref#1#2#3#4{%
   #2\ref{#4}#3
   \catcode`\~ = 13
   \catcode`\$ = 3
   \catcode`\_ = 8
   \catcode`\# = 6
   \catcode`\& = 4
}
\def\QCTOpt[#1]#2{%
  \def\QCTOptB{#1}
  \def\QCTOptA{#2}
}
\def\QCTNOpt#1{%
  \def\QCTOptA{#1}
  \let\QCTOptB\empty
}
\def\Qct{%
  \@ifnextchar[{%
    \QCTOpt}{\QCTNOpt}
}
\def\QCBOpt[#1]#2{%
  \def\QCBOptB{#1}%
  \def\QCBOptA{#2}%
}
\def\QCBNOpt#1{%
  \def\QCBOptA{#1}%
  \let\QCBOptB\empty
}
\def\Qcb{%
  \@ifnextchar[{%
    \QCBOpt}{\QCBNOpt}%
}
\def\PrepCapArgs{%
  \ifx\QCBOptA\empty
    \ifx\QCTOptA\empty
      {}%
    \else
      \ifx\QCTOptB\empty
        {\QCTOptA}%
      \else
        [\QCTOptB]{\QCTOptA}%
      \fi
    \fi
  \else
    \ifx\QCBOptA\empty
      {}%
    \else
      \ifx\QCBOptB\empty
        {\QCBOptA}%
      \else
        [\QCBOptB]{\QCBOptA}%
      \fi
    \fi
  \fi
}
\def\GRAPHICSPS#1{%
 \ifcase\GRAPHICSTYPE
   \special{ps: #1}%
 \or
   \special{language "PS", include "#1"}%
 \fi
}%
\def\graffile#1#2#3#4{%
    \bgroup
	   \@inlabelfalse
       \leavevmode
       \@ifundefined{bbl@deactivate}{\def~{\string~}}{\activesoff}%
        \raise -#4 \BOXTHEFRAME{%
           \hbox to #2{\raise #3\hbox to #2{\null #1\hfil}}}%
    \egroup
}%
\def\draftbox#1#2#3#4{%
 \leavevmode\raise -#4 \hbox{%
  \frame{\rlap{\protect\tiny #1}\hbox to #2%
   {\vrule height#3 width\z@ depth\z@\hfil}%
  }%
 }%
}%
\let\nographics=\@msidraft
\newif\ifwasdraft
\def\GRAPHIC#1#2#3#4#5{%
   \ifnum\@msidraft=\@ne\draftbox{#2}{#3}{#4}{#5}%
   \else\graffile{#1}{#3}{#4}{#5}%
   \fi
}
\def\addtoLaTeXparams#1{%
    \edef\LaTeXparams{\LaTeXparams #1}}%
\newif\ifBoxFrame \BoxFramefalse
\newif\ifOverFrame \OverFramefalse
\newif\ifUnderFrame \UnderFramefalse
\def\BOXTHEFRAME#1{%
   \hbox{%
      \ifBoxFrame
         \frame{#1}%
      \else
         {#1}%
      \fi
   }%
}
\def\doFRAMEparams#1{\BoxFramefalse\OverFramefalse\UnderFramefalse\readFRAMEparams#1\end}%
\def\readFRAMEparams#1{%
 \ifx#1\end%
  \let\next=\relax
  \else
  \ifx#1i\dispkind=\z@\fi
  \ifx#1d\dispkind=\@ne\fi
  \ifx#1f\dispkind=\tw@\fi
  \ifx#1t\addtoLaTeXparams{t}\fi
  \ifx#1b\addtoLaTeXparams{b}\fi
  \ifx#1p\addtoLaTeXparams{p}\fi
  \ifx#1h\addtoLaTeXparams{h}\fi
  \ifx#1X\BoxFrametrue\fi
  \ifx#1O\OverFrametrue\fi
  \ifx#1U\UnderFrametrue\fi
  \ifx#1w
    \ifnum\@msidraft=1\wasdrafttrue\else\wasdraftfalse\fi
    \@msidraft=\@ne
  \fi
  \let\next=\readFRAMEparams
  \fi
 \next
 }%
\def\IFRAME#1#2#3#4#5#6{%
      \bgroup
      \let\QCTOptA\empty
      \let\QCTOptB\empty
      \let\QCBOptA\empty
      \let\QCBOptB\empty
      #6%
      \parindent=0pt
      \leftskip=0pt
      \rightskip=0pt
      \setbox0=\hbox{\QCBOptA}%
      \@tempdima=#1\relax
      \ifOverFrame
          \typeout{This is not implemented yet}%
          \show\HELP
      \else
         \ifdim\wd0>\@tempdima
            \advance\@tempdima by \@tempdima
            \ifdim\wd0 >\@tempdima
               \setbox1 =\vbox{%
                  \unskip\hbox to \@tempdima{\hfill\GRAPHIC{#5}{#4}{#1}{#2}{#3}\hfill}%
                  \unskip\hbox to \@tempdima{\parbox[b]{\@tempdima}{\QCBOptA}}%
               }%
               \wd1=\@tempdima
            \else
               \textwidth=\wd0
               \setbox1 =\vbox{%
                 \noindent\hbox to \wd0{\hfill\GRAPHIC{#5}{#4}{#1}{#2}{#3}\hfill}\\%
                 \noindent\hbox{\QCBOptA}%
               }%
               \wd1=\wd0
            \fi
         \else
            \ifdim\wd0>0pt
              \hsize=\@tempdima
              \setbox1=\vbox{%
                \unskip\GRAPHIC{#5}{#4}{#1}{#2}{0pt}%
                \break
                \unskip\hbox to \@tempdima{\hfill \QCBOptA\hfill}%
              }%
              \wd1=\@tempdima
           \else
              \hsize=\@tempdima
              \setbox1=\vbox{%
                \unskip\GRAPHIC{#5}{#4}{#1}{#2}{0pt}%
              }%
              \wd1=\@tempdima
           \fi
         \fi
         \@tempdimb=\ht1
         \advance\@tempdimb by -#2
         \advance\@tempdimb by #3
         \leavevmode
         \raise -\@tempdimb \hbox{\box1}%
      \fi
      \egroup%
}%
\def\DFRAME#1#2#3#4#5{%
  \hfil\break
  \bgroup
     \leftskip\@flushglue
	 \rightskip\@flushglue
	 \parindent\z@
	 \parfillskip\z@skip
     \let\QCTOptA\empty
     \let\QCTOptB\empty
     \let\QCBOptA\empty
     \let\QCBOptB\empty
	 \vbox\bgroup
        \ifOverFrame 
           #5\QCTOptA\par
        \fi
        \GRAPHIC{#4}{#3}{#1}{#2}{\z@}%
        \ifUnderFrame 
           \break#5\QCBOptA
        \fi
	 \egroup
   \egroup
   \break
}%
\def\FFRAME#1#2#3#4#5#6#7{%
  \@ifundefined{floatstyle}
    {
     \begin{figure}[#1]%
    }
    {
	 \ifx#1h
      \begin{figure}[H]%
	 \else
      \begin{figure}[#1]%
	 \fi
	}
  \let\QCTOptA\empty
  \let\QCTOptB\empty
  \let\QCBOptA\empty
  \let\QCBOptB\empty
  \ifOverFrame
    #4
    \ifx\QCTOptA\empty
    \else
      \ifx\QCTOptB\empty
        \caption{\QCTOptA}%
      \else
        \caption[\QCTOptB]{\QCTOptA}%
      \fi
    \fi
    \ifUnderFrame\else
      \label{#5}%
    \fi
  \else
    \UnderFrametrue%
  \fi
  \begin{center}\GRAPHIC{#7}{#6}{#2}{#3}{\z@}\end{center}%
  \ifUnderFrame
    #4
    \ifx\QCBOptA\empty
      \caption{}%
    \else
      \ifx\QCBOptB\empty
        \caption{\QCBOptA}%
      \else
        \caption[\QCBOptB]{\QCBOptA}%
      \fi
    \fi
    \label{#5}%
  \fi
  \end{figure}%
 }%
\def\makeactives{
  \catcode`\"=\active
  \catcode`\;=\active
  \catcode`\:=\active
  \catcode`\'=\active
  \catcode`\~=\active
}
   \gdef\activesoff{%
      \def"{\string"}%
      \def;{\string;}%
      \def:{\string:}%
      \def'{\string'}%
      \def~{\string~}%
    }
\def\FRAME#1#2#3#4#5#6#7#8{%
 \bgroup
 \ifnum\@msidraft=\@ne
   \wasdrafttrue
 \else
   \wasdraftfalse%
 \fi
 \def\LaTeXparams{}%
 \dispkind=\z@
 \def\LaTeXparams{}%
 \doFRAMEparams{#1}%
 \ifnum\dispkind=\z@\IFRAME{#2}{#3}{#4}{#7}{#8}{#5}\else
  \ifnum\dispkind=\@ne\DFRAME{#2}{#3}{#7}{#8}{#5}\else
   \ifnum\dispkind=\tw@
    \edef\@tempa{\noexpand\FFRAME{\LaTeXparams}}%
    \@tempa{#2}{#3}{#5}{#6}{#7}{#8}%
    \fi
   \fi
  \fi
  \ifwasdraft\@msidraft=1\else\@msidraft=0\fi{}%
  \egroup
 }%
\def\TEXUX#1{"texux"}
\def\func#1{\mathop{\rm #1}\nolimits}%
\long\def\QQQ#1#2{%
     \long\expandafter\def\csname#1\endcsname{#2}}%
\long\def\QQA#1#2{}%
\def\QTR#1#2{{\csname#1\endcsname {#2}}}%
\def\EXPAND#1[#2]#3{}%
\def\NOEXPAND#1[#2]#3{}%
\def\LaTeXparent#1{}%
\def\ChildStyles#1{}%
\def\ChildDefaults#1{}%
\def\QTagDef#1#2#3{}%
  \providecommand{\UNICODE}[2][]{\protect\rule{.1in}{.1in}}
  \providecommand{\U}[1]{\protect\rule{.1in}{.1in}}
\def\QQfnmark#1{\footnotemark}
 \def\abstract{%
  \if@twocolumn
   \section*{Abstract (Not appropriate in this style!)}%
   \else \small 
   \begin{center}{\bf Abstract\vspace{-.5em}\vspace{\z@}}\end{center}%
   \quotation 
   \fi
  }%
   \def\registered{\relax\ifmmode{}\r@gistered
                    \else$\m@th\r@gistered$\fi}%
 \def\r@gistered{^{\ooalign
  {\hfil\raise.07ex\hbox{$\scriptstyle\rm\text{R}$}\hfil\crcr
  \mathhexbox20D}}}}{}%
\newdimen\theight
\def\newfmtname{LaTeX2e}
  \DeclareOldFontCommand{\rm}{\normalfont\rmfamily}{\mathrm}
  \DeclareOldFontCommand{\sf}{\normalfont\sffamily}{\mathsf}
  \DeclareOldFontCommand{\tt}{\normalfont\ttfamily}{\mathtt}
  \DeclareOldFontCommand{\bf}{\normalfont\bfseries}{\mathbf}
  \DeclareOldFontCommand{\it}{\normalfont\itshape}{\mathit}
  \DeclareOldFontCommand{\sl}{\normalfont\slshape}{\@nomath\sl}
  \DeclareOldFontCommand{\sc}{\normalfont\scshape}{\@nomath\sc}
\def\alpha{{\Greekmath 010B}}%
\def\beta{{\Greekmath 010C}}%
\def\gamma{{\Greekmath 010D}}%
\def\delta{{\Greekmath 010E}}%
\def\epsilon{{\Greekmath 010F}}%
\def\zeta{{\Greekmath 0110}}%
\def\eta{{\Greekmath 0111}}%
\def\theta{{\Greekmath 0112}}%
\def\iota{{\Greekmath 0113}}%
\def\kappa{{\Greekmath 0114}}%
\def\lambda{{\Greekmath 0115}}%
\def\mu{{\Greekmath 0116}}%
\def\nu{{\Greekmath 0117}}%
\def\xi{{\Greekmath 0118}}%
\def\pi{{\Greekmath 0119}}%
\def\rho{{\Greekmath 011A}}%
\def\sigma{{\Greekmath 011B}}%
\def\tau{{\Greekmath 011C}}%
\def\upsilon{{\Greekmath 011D}}%
\def\phi{{\Greekmath 011E}}%
\def\chi{{\Greekmath 011F}}%
\def\psi{{\Greekmath 0120}}%
\def\omega{{\Greekmath 0121}}%
\def\varepsilon{{\Greekmath 0122}}%
\def\vartheta{{\Greekmath 0123}}%
\def\varpi{{\Greekmath 0124}}%
\def\varrho{{\Greekmath 0125}}%
\def\varsigma{{\Greekmath 0126}}%
\def\varphi{{\Greekmath 0127}}%
\def\nabla{{\Greekmath 0272}}
\def\FindBoldGroup{%
   {\setbox0=\hbox{$\mathbf{x\global\edef\theboldgroup{\the\mathgroup}}$}}%
}
\def\Greekmath#1#2#3#4{%
    \if@compatibility
        \ifnum\mathgroup=\symbold
           \mathchoice{\mbox{\boldmath$\displaystyle\mathchar"#1#2#3#4$}}%
                      {\mbox{\boldmath$\textstyle\mathchar"#1#2#3#4$}}%
                      {\mbox{\boldmath$\scriptstyle\mathchar"#1#2#3#4$}}%
                      {\mbox{\boldmath$\scriptscriptstyle\mathchar"#1#2#3#4$}}%
        \else
           \mathchar"#1#2#3#4%
        \fi 
    \else 
        \FindBoldGroup
        \ifnum\mathgroup=\theboldgroup 
           \mathchoice{\mbox{\boldmath$\displaystyle\mathchar"#1#2#3#4$}}%
                      {\mbox{\boldmath$\textstyle\mathchar"#1#2#3#4$}}%
                      {\mbox{\boldmath$\scriptstyle\mathchar"#1#2#3#4$}}%
                      {\mbox{\boldmath$\scriptscriptstyle\mathchar"#1#2#3#4$}}%
        \else
           \mathchar"#1#2#3#4%
        \fi     	    
	  \fi}
\newif\ifGreekBold  \GreekBoldfalse
\let\SAVEPBF=\pbf
\def\pbf{\GreekBoldtrue\SAVEPBF}%
  \newcounter{equationnumber}  
  \def\mathletters{%
     \addtocounter{equation}{1}
     \edef\@currentlabel{\theequation}%
     \setcounter{equationnumber}{\c@equation}
     \setcounter{equation}{0}%
     \edef\theequation{\@currentlabel\noexpand\alph{equation}}%
  }
    \def\BibTeX{{\rm B\kern-.05em{\sc i\kern-.025em b}\kern-.08em
                 T\kern-.1667em\lower.7ex\hbox{E}\kern-.125emX}}}{}%
\def\AmS{{\protect\usefont{OMS}{cmsy}{m}{n}%
                A\kern-.1667em\lower.5ex\hbox{M}\kern-.125emS}}}{}%
\def\@@eqncr{\let\@tempa\relax
    \ifcase\@eqcnt \def\@tempa{& & &}\or \def\@tempa{& &}%
      \else \def\@tempa{&}\fi
     \@tempa
     \if@eqnsw
        \iftag@
           \@taggnum
        \else
           \@eqnnum\stepcounter{equation}%
        \fi
     \fi
     \global\tag@false
     \global\@eqnswtrue
     \global\@eqcnt\z@\cr}
\def\TCItag{\@ifnextchar*{\@TCItagstar}{\@TCItag}}
\def\@TCItag#1{%
    \global\tag@true
    \global\def\@taggnum{(#1)}}
\def\@TCItagstar*#1{%
    \global\tag@true
    \global\def\@taggnum{#1}}
\def\ExitTCILatex{\makeatother }
\let\DOTSI\relax
\def\RIfM@{\relax\ifmmode}%
\def\FN@{\futurelet\next}%
\def\iint{\DOTSI\intno@\tw@\FN@\ints@}%
\def\iiint{\DOTSI\intno@\thr@@\FN@\ints@}%
\def\iiiint{\DOTSI\intno@4 \FN@\ints@}%
\def\idotsint{\DOTSI\intno@\z@\FN@\ints@}%
\def\ints@{\findlimits@\ints@@}%
\newif\iflimtoken@
\newif\iflimits@
\def\findlimits@{\limtoken@true\ifx\next\limits\limits@true
 \else\ifx\next\nolimits\limits@false\else
 \limtoken@false\ifx\ilimits@\nolimits\limits@false\else
 \ifinner\limits@false\else\limits@true\fi\fi\fi\fi}%
\def\multint@{\int\ifnum\intno@=\z@\intdots@                          
 \else\intkern@\fi                                                    
 \ifnum\intno@>\tw@\int\intkern@\fi                                   
 \ifnum\intno@>\thr@@\int\intkern@\fi                                 
 \int}
\def\multintlimits@{\intop\ifnum\intno@=\z@\intdots@\else\intkern@\fi
 \ifnum\intno@>\tw@\intop\intkern@\fi
 \ifnum\intno@>\thr@@\intop\intkern@\fi\intop}%
\def\intic@{%
    \mathchoice{\hskip.5em}{\hskip.4em}{\hskip.4em}{\hskip.4em}}%
\def\negintic@{\mathchoice
 {\hskip-.5em}{\hskip-.4em}{\hskip-.4em}{\hskip-.4em}}%
\def\ints@@{\iflimtoken@                                              
 \def\ints@@@{\iflimits@\negintic@
   \mathop{\intic@\multintlimits@}\limits                             
  \else\multint@\nolimits\fi                                          
  \eat@}
 \else                                                                
 \def\ints@@@{\iflimits@\negintic@
  \mathop{\intic@\multintlimits@}\limits\else
  \multint@\nolimits\fi}\fi\ints@@@}%
\def\intkern@{\mathchoice{\!\!\!}{\!\!}{\!\!}{\!\!}}%
\def\plaincdots@{\mathinner{\cdotp\cdotp\cdotp}}%
\def\intdots@{\mathchoice{\plaincdots@}%
 {{\cdotp}\mkern1.5mu{\cdotp}\mkern1.5mu{\cdotp}}%
 {{\cdotp}\mkern1mu{\cdotp}\mkern1mu{\cdotp}}%
 {{\cdotp}\mkern1mu{\cdotp}\mkern1mu{\cdotp}}}%
\def\RIfM@{\relax\protect\ifmmode}
\def\text{\RIfM@\expandafter\text@\else\expandafter\mbox\fi}
\let\nfss@text\text
\def\text@#1{\mathchoice
   {\textdef@\displaystyle\f@size{#1}}%
   {\textdef@\textstyle\tf@size{\firstchoice@false #1}}%
   {\textdef@\textstyle\sf@size{\firstchoice@false #1}}%
   {\textdef@\textstyle \ssf@size{\firstchoice@false #1}}%
   \glb@settings}
\def\textdef@#1#2#3{\hbox{{%
                    \everymath{#1}%
                    \let\f@size#2\selectfont
                    #3}}}
\newif\iffirstchoice@
\def\Let@{\relax\iffalse{\fi\let\\=\cr\iffalse}\fi}%
\def\vspace@{\def\vspace##1{\crcr\noalign{\vskip##1\relax}}}%
\def\multilimits@{\bgroup\vspace@\Let@
 \baselineskip\fontdimen10 \scriptfont\tw@
 \advance\baselineskip\fontdimen12 \scriptfont\tw@
 \lineskip\thr@@\fontdimen8 \scriptfont\thr@@
 \lineskiplimit\lineskip
 \vbox\bgroup\ialign\bgroup\hfil$\m@th\scriptstyle{##}$\hfil\crcr}%
\def\Sb{_\multilimits@}%
\def\endSb{\crcr\egroup\egroup\egroup}%
\def\Sp{^\multilimits@}%
\newdimen\ex@
\def\rightarrowfill@#1{$#1\m@th\mathord-\mkern-6mu\cleaders
 \hbox{$#1\mkern-2mu\mathord-\mkern-2mu$}\hfill
 \mkern-6mu\mathord\rightarrow$}%
\def\leftarrowfill@#1{$#1\m@th\mathord\leftarrow\mkern-6mu\cleaders
 \hbox{$#1\mkern-2mu\mathord-\mkern-2mu$}\hfill\mkern-6mu\mathord-$}%
\def\leftrightarrowfill@#1{$#1\m@th\mathord\leftarrow
\mkern-6mu\cleaders
 \hbox{$#1\mkern-2mu\mathord-\mkern-2mu$}\hfill
 \mkern-6mu\mathord\rightarrow$}%
\def\overrightarrow{\mathpalette\overrightarrow@}%
\def\overrightarrow@#1#2{\vbox{\ialign{##\crcr\rightarrowfill@#1\crcr
 \noalign{\kern-\ex@\nointerlineskip}$\m@th\hfil#1#2\hfil$\crcr}}}%
\def\overleftarrow{\mathpalette\overleftarrow@}%
\def\overleftarrow@#1#2{\vbox{\ialign{##\crcr\leftarrowfill@#1\crcr
 \noalign{\kern-\ex@\nointerlineskip}$\m@th\hfil#1#2\hfil$\crcr}}}%
\def\overleftrightarrow{\mathpalette\overleftrightarrow@}%
\def\overleftrightarrow@#1#2{\vbox{\ialign{##\crcr
   \leftrightarrowfill@#1\crcr
 \noalign{\kern-\ex@\nointerlineskip}$\m@th\hfil#1#2\hfil$\crcr}}}%
\def\underrightarrow{\mathpalette\underrightarrow@}%
\def\underrightarrow@#1#2{\vtop{\ialign{##\crcr$\m@th\hfil#1#2\hfil
  $\crcr\noalign{\nointerlineskip}\rightarrowfill@#1\crcr}}}%
\def\underleftarrow{\mathpalette\underleftarrow@}%
\def\underleftarrow@#1#2{\vtop{\ialign{##\crcr$\m@th\hfil#1#2\hfil
  $\crcr\noalign{\nointerlineskip}\leftarrowfill@#1\crcr}}}%
\def\underleftrightarrow{\mathpalette\underleftrightarrow@}%
\def\underleftrightarrow@#1#2{\vtop{\ialign{##\crcr$\m@th
  \hfil#1#2\hfil$\crcr
 \noalign{\nointerlineskip}\leftrightarrowfill@#1\crcr}}}%
\def\qopnamewl@#1{\mathop{\operator@font#1}\nlimits@}
\let\nlimits@\displaylimits
\def\setboxz@h{\setbox\z@\hbox}
\def\varlim@#1#2{\mathop{\vtop{\ialign{##\crcr
 \hfil$#1\m@th\operator@font lim$\hfil\crcr
 \noalign{\nointerlineskip}#2#1\crcr
 \noalign{\nointerlineskip\kern-\ex@}\crcr}}}}
 \def\rightarrowfill@#1{\m@th\setboxz@h{$#1-$}\ht\z@\z@
  $#1\copy\z@\mkern-6mu\cleaders
  \hbox{$#1\mkern-2mu\box\z@\mkern-2mu$}\hfill
  \mkern-6mu\mathord\rightarrow$}
\def\leftarrowfill@#1{\m@th\setboxz@h{$#1-$}\ht\z@\z@
  $#1\mathord\leftarrow\mkern-6mu\cleaders
  \hbox{$#1\mkern-2mu\copy\z@\mkern-2mu$}\hfill
  \mkern-6mu\box\z@$}
\def\projlim{\qopnamewl@{proj\,lim}}
\def\injlim{\qopnamewl@{inj\,lim}}
\def\varinjlim{\mathpalette\varlim@\rightarrowfill@}
\def\varprojlim{\mathpalette\varlim@\leftarrowfill@}
\def\varliminf{\mathpalette\varliminf@{}}
\def\varliminf@#1{\mathop{\underline{\vrule\@depth.2\ex@\@width\z@
   \hbox{$#1\m@th\operator@font lim$}}}}
\def\varlimsup{\mathpalette\varlimsup@{}}
\def\varlimsup@#1{\mathop{\overline
  {\hbox{$#1\m@th\operator@font lim$}}}}
\def\align{\@verbatim \frenchspacing\@vobeyspaces \@alignverbatim
You are using the "align" environment in a style in which it is not defined.}
\let\csname endalign*\endcsname =\endtrivlist
\def\alignat{\@verbatim \frenchspacing\@vobeyspaces \@alignatverbatim
You are using the "alignat" environment in a style in which it is not defined.}
\let\csname endalignat*\endcsname =\endtrivlist
\def\xalignat{\@verbatim \frenchspacing\@vobeyspaces \@xalignatverbatim
You are using the "xalignat" environment in a style in which it is not defined.}
\let\csname endxalignat*\endcsname =\endtrivlist
\def\gather{\@verbatim \frenchspacing\@vobeyspaces \@gatherverbatim
You are using the "gather" environment in a style in which it is not defined.}
\let\csname endgather*\endcsname =\endtrivlist
\def\multiline{\@verbatim \frenchspacing\@vobeyspaces \@multilineverbatim
You are using the "multiline" environment in a style in which it is not defined.}
\let\csname endmultiline*\endcsname =\endtrivlist
\def\arrax{\@verbatim \frenchspacing\@vobeyspaces \@arraxverbatim
You are using a type of "array" construct that is only allowed in AmS-LaTeX.}
\def\tabulax{\@verbatim \frenchspacing\@vobeyspaces \@tabulaxverbatim
You are using a type of "tabular" construct that is only allowed in AmS-LaTeX.}
\let\csname endarrax*\endcsname =\endtrivlist
\let\csname endtabulax*\endcsname =\endtrivlist
 \def\endequation{%
     \ifmmode\ifinner 
      \iftag@
        \addtocounter{equation}{-1} 
        $\hfil
           \displaywidth\linewidth\@taggnum\egroup \endtrivlist
        \global\tag@false
        \global\@ignoretrue   
      \else
        $\hfil
           \displaywidth\linewidth\@eqnnum\egroup \endtrivlist
        \global\tag@false
        \global\@ignoretrue 
      \fi
     \else   
      \iftag@
        \addtocounter{equation}{-1} 
        \eqno \hbox{\@taggnum}
        \global\tag@false%
        $$\global\@ignoretrue
      \else
        \eqno \hbox{\@eqnnum}
        $$\global\@ignoretrue
      \fi
     \fi\fi
 } 
 \newif\iftag@ \tag@false
 \def\TCItag{\@ifnextchar*{\@TCItagstar}{\@TCItag}}
 \def\@TCItag#1{%
     \global\tag@true
     \global\def\@taggnum{(#1)}}
 \def\@TCItagstar*#1{%
     \global\tag@true
     \global\def\@taggnum{#1}}
     \def\tag{\@ifnextchar*{\@tagstar}{\@tag}}
     \def\@tag#1{%
         \global\tag@true
         \global\def\@taggnum{(#1)}}
     \def\@tagstar*#1{%
         \global\tag@true
         \global\def\@taggnum{#1}}
\def\dfrac#1#2{{\displaystyle {#1 \over #2}}}%
\begin{document}

\begin{abstract}
We consider a class of endomorphisms that contains a set of piecewise partially hyperbolic dynamics semi-conjugated to non-uniformly expanding maps. Our goal is to study a class of endomorphisms that preserve a foliation that is almost everywhere uniformly contracted, with possible discontinuity sets parallel to the contracting direction. We apply the spectral gap property and the $\zeta$-H\"older regularity of the disintegration of its equilibrium states to prove a quantitative statistical stability statement. More precisely, under deterministic perturbations of the system of size $\delta$, we show that the $F$-invariant measure varies continuously with respect to a suitable anisotropic norm. Furthermore, we establish that certain interesting classes of perturbations exhibit a modulus of continuity estimated by $D_2\delta^\zeta \log \delta$, where $D_2$ is a constant.

\end{abstract}

\title[Quantitative Stability for Equilibrium States of Skew Products]{Quantitative Statistical Stability for Equilibrium States of Piecewise Partially Hyperbolic Maps.}
\author[Rafael A. Bilbao]{Rafael A. Bilbao}
\author[Ricardo Bioni]{Ricardo Bioni}
\author[Rafael Lucena]{Rafael Lucena}

\date{\today }
\keywords{Statistical Stability, Transfer
Operator, Equilibrium States, Skew Product.}

\address[Rafael A. Bilbao]{Universidad Pedag\'ogica y Tecnol\'ogica de Colombia, Avenida Central del Norte 39-115, Sede Central Tunja, Boyac\'a, 150003, Colombia.}
\email{rafael.alvarez@uptc.edu.co}

\address[Ricardo Bioni]{Rua Costa Bastos, 34, Santa Teresa, Rio de Janeiro-Brasil}
\email{ricardo.bioni@hotmail.com}

\address[Rafael Lucena]{Universidade Federal de Alagoas, Instituto de Matemática - UFAL, Av. Lourival Melo Mota, S/N
	Tabuleiro dos Martins, Maceio - AL, 57072-900, Brasil}
\email{rafael.lucena@im.ufal.br}
\urladdr{www.im.ufal.br/professor/rafaellucena}
\maketitle

\section{Introduction}\label{intro}

Understanding how statistical properties change when a system is perturbed is of significant interest in both pure and applied mathematics. When a statistical property of a system varies continuously after deterministic or even stochastic variations, we say it is \textit{statistically stable}. The study of these properties is motivated by the desire to understand how uncertainty impacts the quantitative and qualitative measurements of systems.

An important ergodic object of a dynamical system that makes interesting the investigation of its
stability is the invariant measure, given that it is key in understanding the long-term behavior of the dynamics. To do this, consider a one-parameter family of dynamics $\{F_{\delta }\}_{\delta \in [0,1)}$ as a perturbation of a system $F=F_0$. Suppose that $\{ F{_\delta }\}_{\delta \in [0,1)}$ admits a one-parameter family of invariant measures $\{\mu_\delta \}_{\delta \in [0,1)}$, i.e., $\mu_\delta$ is a $F_{\delta}$-invariant probability measure for all $\delta \in [0,1)$. We say that $\mu _0$ is \textit{statistically stable} if the function $\delta \mapsto \mu _\delta$ is continuous at $0$ in a suitable topology. In this paper, our aim is to prove the continuity and to estimate the modulus of continuity of the function $\delta \mapsto \mu _\delta$ at $0$.

To prove our results, we aim to construct a suitable vector space of signed measures that satisfy three main properties. This space includes the family $\{\mu_\delta \}_{\delta \in [0,1)}$, where $\mu_\delta$ is the unique $F_\delta$-invariant measure inside this space for all $\delta \in [0,1)$. Additionally, the function $\delta \longmapsto \mu _\delta$ is continuous at $0$, with a modulus of continuity of the order of $\delta^\zeta \log \delta$ (where $\zeta$ is a constant that depends on $F$). We use the functional analytic approach of \cite{GLU} to study the transfer operator of $\operatorname{F}_*$, which is the linear operator $\operatorname{F}_*$ that associates each signed measure $\mu$ with the signed measure $\operatorname{F}_* \mu$ defined by $\operatorname{F}_* \mu (A) = \mu (F^{-1}(A))$. Positive fixed points of $\operatorname{F}_*$ are $F$-invariant measures, so the problem is reduced to understanding how the eigenvectors of the induced family of transfer operators $\{\operatorname {F}_{\delta*}\}_{\delta \in [0,1)}$ associated with unitary eigenvalues (invariant measures) vary when the system changes.

In \cite{GLU}, the authors studied this property for Lorenz-like systems, $F=(f,G)$. In this case, the quotient the quotient map $f$ is a piecewise expanding map and the fiber function $G$ is Lipschitz in the first variable on each element of a finite family of vertical strips. This family covers a full measure set of the ambient space. Moreover, they defined anisotropic spaces where the action of the transfer operator had a spectral gap, which provided a quantitative stability statement, estimating the modulus of continuity of the function $\delta \longmapsto \mu _\delta$ at $0$. It is worth mentioning that the Bounded Variation regularity of the disintegration of the invariant measure was a crucial ingredient used in \cite{GLU} to obtain the stability result.

The dynamical system under consideration in this work is a skew-product of the form $F=(f,G)$, where the quotient map $f$ is a non-uniformly expanding system. Moreover, the fiber function $G$ is H\"older in the first variable on each element of a finite family of vertical strips. As in \cite{GLU}, this family covers a full measure set of the ambient space. To handle this system, we utilize the H\"older regularity of the disintegration of the invariant measure established in \cite{RRR}. In addition, to overcome the challenges posed by the new hypotheses, we introduce several new definitions, including the concepts of an \textbf{admissible $R(\delta)$-perturbation} and a \textbf{$(R(\delta), \zeta)$-family of operators} (see the next paragraphs for these definitions). Some of these definitions generalize the ones given in \cite{GLU}.

Although the uniform hyperbolic scenario is well understood, our understanding of partially hyperbolic systems, especially those that are non-invertible or have discontinuities, is far from complete. For further information on this topic, interested readers can refer to \cite{Gjep} and \cite{TC}. While the former deals with systems that allow discontinuities, the latter is restricted to smooth invertible systems.

In the approach presented here, a finite number of sets of discontinuities (lines) parallel to the contracting direction are allowed. In comparison with the works cited above, \cite{GLU} requires a uniform expansion (piecewise) on the base map $f$ despite allowing discontinuities. On the other hand, \cite{Gjep} obtains quantitative estimates for statistical stability for piecewise constant toral extensions $F = (f, G)$ with a uniform expanding quotient map $f$ and uses norms similar to those employed in our work.

In this paper, we study skew-product maps $F: \Sigma \longrightarrow \Sigma$, where $F(x,y)=(f(x),G(x,y))$, $\Sigma = M \times K$ is a product space, and $M$ is a compact and connected Riemannian manifold equipped with a Riemannian metric $d_1$, while $K$ is a compact metric space equipped with a metric $d_2$. The space $\Sigma$ is endowed with the metric $d_1 + d_2$. For simplicity, we assume that $\mathrm{diam}(M) = 1$, which is not restrictive but will avoid multiplicative constants. 

We also assume that $F$ contracts almost every vertical fiber $\gamma = \{x\}\times K$ and its quotient map $f: M\longrightarrow M$ is a non-uniformly expanding system. More precisely, $f:M \longrightarrow M$ is a local diffeomorphism, and there exists a continuous function $L:M\longrightarrow\mathbb{R}$ such that for every $x\in M$, there exists a neighborhood $U_x$ of $x$ such that $f_x:=f|_{U_x}: U_x \longrightarrow f(U_x)$ is invertible and satisfies
$$
d_1(f_x^{-1}(y), f_x^{-1}(z)) \leq L(x)d_1(y, z)
$$
for all $y,z \in f(U_x)$. In particular, $\#f^{-1}(x)$ is constant for all $x\in M$, and we set $\deg(f):=\#f^{-1}(x)$ as the degree of $f$.

Define the function $\rho: M \longrightarrow \mathbb{R}$ by 
\begin{equation*}
	\rho(x) := \dfrac{1}{|\det Df(x)|},
\end{equation*}
where $\det Df$ is the Jacobian of $f$ with respect to a fixed probability $m_1$ on $M$ (see \cite{Kva} for definitions and basic results on the Jacobian). We assume that $m_1$ is an equilibrium state for the potential $\phi = \log \dfrac{1}{|\det D f|}$. That is, $m_1$ satisfies 
\begin{equation}\label{eq1}
	\int{\log \dfrac{1}{|\det Df|}}dm_1 + h_{m_1}(f) = \sup _{f \ast \mu = \mu} \left\{\int {\log \dfrac{1}{|\det D f|}} d\mu + h_{\mu}(f)\right\},
\end{equation}
where $h_{\mu}(f)$ denotes the entropy of the system $(f, \mu)$. By \cite{VAC} and \cite{VM} a measure $m_1$ which satisfies (\ref{eq1}) exists. Note that $\rho$ is defined $m_1$-a.e. $x \in M$.

Suppose that there exists an open region $\mathcal{A} \subset M$ and constants $\sigma > 1$ and $L_1 \geq 1$ such that the following conditions hold:
\begin{enumerate}
	\item[(f1)] $L(x) \leq L_1$ for every $x \in \mathcal{A}$ and $L(x) < \sigma^{-1}$ for every $x \in \mathcal{A}^c$. Moreover, the constant $L_1$ satisfies the inequality given by equation (\ref{kdljfhkdjfkasd}) that will be presented ahead.
	\item[(f2)] There exists a finite covering $\mathcal{U}$ of $M$ by open domains of injectivity of $f$, such that $\mathcal{A}$ can be covered by $q < \deg(f)$ of these domains.
\end{enumerate}

Let $H_\zeta$ ($\zeta \leq 1$) represent the set of $\zeta$-Hölder functions $h: M \longrightarrow \mathbb{R}$. In other words, defining $$H_\zeta(h) := \sup_{x\neq y} \frac{|h(x) - h(y)|}{d_1(x,y)^\zeta},$$ we have $$H_\zeta := \{ h:M \longrightarrow \mathbb{R}: H_\zeta(h) < \infty\}.$$

Next, we require that condition (f3) holds, which is an open condition with respect to the Hölder norm. Equation (\ref{f32}), presented below, specifies that $\rho$ is $\zeta$-Holder and belongs to a small cone of Hölder continuous functions (see \cite{VAC}). For examples of non-uniformly expanding transformations that satisfy (f1), (f2) and (f3), the reader may refer to \cite{VAC} and \cite{VM}. In this paper, we also explore such maps in Section \ref{ooisidrosr} and provide Example \ref{new}, where (f1), (f2), and (f3) have been explicitly demonstrated. 

\begin{enumerate}	
	\item[(f3)] There exists a small enough $\epsilon _\rho >0$ such that
	\begin{equation}\label{f31}
		\sup \log (\rho) - \inf \log (\rho) <\epsilon _\rho;
	\end{equation}and
	\begin{equation}\label{f32}
		H_\zeta (\rho) < \epsilon_\rho \inf \rho.
	\end{equation}
\end{enumerate}

Precisely, we assume that the constants $\epsilon_\rho$ and $L_1$ satisfy the condition:
\begin{equation}\label{kdljfhkdjfkasd}
	\e ^{\epsilon_\rho} \cdot \left( \dfrac{(\deg(f) - q)\sigma ^{-\zeta} + qL_1^\zeta[1+(L_1-1)^\zeta] }{\deg(f)}\right)< 1.
\end{equation}

We assume that the fiber map $G: \Sigma \longrightarrow K$ satisfies:

\begin{enumerate}
	\item[(G1)] $G$ is uniformly contracting on $m_1$-a.e. vertical fiber $\gamma_x :={x}\times K$. Precisely, there exists $0< \alpha <1$ such that for $m_1$-a.e. $x\in M$, it holds that
	\begin{equation}
		d_2(G(x,z_{1}),G(x,z_2))\leq \alpha d_2(z_{1},z_{2}), \quad \forall z_{1},z_{2}\in K. \label{contracting1}
	\end{equation}
	We denote the set of all vertical fibers $\gamma_x$ by $\mathcal{F}^s$: $$\mathcal{F}^s:= \{\gamma _x:=\{ x\}\times K; x \in M \} .$$ When no confusion is present, the elements of $\mathcal{F}^s$ will be denoted simply by $\gamma$ instead of $\gamma _x$.
	\item[(G2)] Let $P_1, \cdots, P_{\deg(f)}$ be the partition of $M$ given in Remark \ref{chkjg}, and let $\zeta \leq 1$. Suppose that  
	\begin{equation*}
		|G_i|_\zeta:= \sup _y\sup_{x_1, x_2 \in P_i} \dfrac{d_2(G(x_1,y), G(x_2,y))}{d_1(x_1,x_2)^\zeta}< \infty.
	\end{equation*}
	Denote by $|G|_\zeta$ the following constant: 
	\begin{equation}\label{jdhfjdh}
		|G|_\zeta := \max_{i=1, \cdots, \deg(f)} \{|G_i|_\zeta\}.
	\end{equation}
\end{enumerate}

\begin{remark}
	The condition (G2) implies that $G$ may be discontinuous on the sets $\partial P_i \times K$ for all $i=1, \cdots, \deg(f)$, where $\partial P_i$ denotes the boundary of $P_i$.
\end{remark}

\begin{remark}\label{notation}
	Since there is a bijective correspondence between the elements $x \in M$ and the fibers $\gamma = \{x\}\times K$, from now on we also use $\gamma$ to denote the elements of $M$.
\end{remark}

For the system $F$ under consideration (see \cite{RRR}), the transfer operator $\operatorname {F}_\ast$ has a spectral gap on a space of signed measures, $\mu$, such that its projection ($\pi_ {1}(x,y)=x$ for all $x \in M$ and $y \in K$) onto the first coordinate, $\pi_ {1\ast} \mu$ (the pushforward), is absolutely continuous with respect to $m_1$ and its density satisfies $\frac{d \pi _{1\ast} \mu}{d m_1} \in H_\zeta$. We denote this space by $S^\infty$ (see definition \ref{sdfsdfsdasd}). It is worth mentioning that $S^\infty$ is a suitable anisotropic space of disintegrated measures. For these maps, we prove quantitative results on the statistical stability of the unique equilibrium state of $F$ in $S^\infty$ under a class of deterministic perturbations of the system, $\{F_{\delta }\}_{\delta \in [0,1)}, F_{\delta }=(f_\delta, G_\delta)$, $f_\delta:M \longrightarrow M$, $G_\delta : M \times K \longrightarrow K$ for all $\delta \in [0,1)$ and $F_0=F$.

For such a perturbation, we suppose the following conditions:

\begin{enumerate}
	\item [(U1)] There exists a small enough $\delta _1 $ such that for all $\delta \in (0, \delta_1)$,  it holds
	$$\displaystyle{ \deg (f_\delta)=\deg(f)},$$ for all $\delta \in (0, \delta_1).$
	
\end{enumerate}

\begin{enumerate}
	
	\item [(U2)] For every $\gamma \in M$ and for all $i=1,\cdots,\deg(f)$ denote by $\gamma_{\delta,i}$ the $i$-th pre-image of $\gamma$ by $f_\delta$ (see Remark \ref{notation}). Suppose there exists a real-valued function $\delta \longmapsto R(\delta) \in \mathbb{R}^+$ such that $$\lim_{\delta \rightarrow 0^+} {R(\delta)\log (\delta)}=0$$ and the following three conditions hold:

	\begin{enumerate}
		\item [(U2.1)]

		$\displaystyle{\sum_{i=1}^{\deg(f)}%
			\left\vert \dfrac{1}{\det Df_{\delta}(\gamma _{\delta,i})}
			-\dfrac{1}{\det Df_{0}(\gamma _{0,i})}\right\vert \leq R(\delta)}$;

	\end{enumerate}	
	
	\begin{enumerate}
		\item [(U2.2)] 
		
		$\esssup _\gamma \max_{i=1, \cdots, \deg(f)}	d_1(\gamma _{0,i},\gamma _{\delta,i}) \leq R(\delta);$
	\end{enumerate}

	\begin{enumerate}
		\item [(U2.3)] $G_0$ and $G_\delta$ are $R(\delta)$-close in the $\sup$ norm: for all $\delta$  $$d_2(G_{0}(x,y),G_{\delta }(x,y))\leq R(\delta) \ \forall (x,y) \in M\times K;$$ 
	\end{enumerate}
	
	\item [(U3)] For all $\delta \in (0,\delta_1)$, $f_\delta$ has an equilibrium state $m_{1,\delta}$, and $m_{1,\delta}$ is equivalent to $m_1$ for all $\delta \in [0,\delta_1)$. This implies that $m_1 \ll m_{1,\delta}$ and $m_{1,\delta} \ll m_1$ for all $\delta \in [0,\delta_1)$.  
\end{enumerate}

\begin{remark}\label{toyiout}
	By (U3), note that (see Remark \ref{notation}) $\sum_{i=1}^{\deg(f)}  \dfrac{1}{\det Df_{\delta}(\gamma _{\delta,i})} =1$ $m_1$-a.e., since $m_1 \ll m_{1,\delta}$ for all $\delta$ and $m_{1,\delta}$ is $f_\delta$-invariant. 
\end{remark}
\begin{enumerate}
	\item [(A1)] There exist constants $D>0$ and $0<\lambda<1$ such that for all $g \in H_\zeta$,  all $\delta \in [0,1)$,  and all $n \geq 1$, the following inequality holds:
	
	\begin{equation*}
		|\operatorname{P}_{f_\delta}^{n}g|_{\zeta} \leq D \lambda  ^n | g|_{\zeta} + D|g|_{\infty},
	\end{equation*}where $|g|_\zeta := H_\zeta (g) + |g|_{\infty}$ and $\operatorname{P}_{f_{\delta }}$ is the Perron-Frobenius operator of $f_{\delta }$. That is, for all $\delta \in [0,1)$,  $\operatorname{P}_{f_{\delta }}$ is the unique linear operator $\operatorname{P}_{f_{\delta }}: L^1_{m_1} \longrightarrow L^1_{m_1}$ such that for all $\psi \in L^{\infty} _{m_1}$ and all $\phi \in L^1 _{m_1}$ it holds that $$\int{\psi \operatorname{P}_{f_{\delta }} (\phi)}dm_1 = \int{(\psi \circ f_\delta) \phi}dm_1.$$ 
\end{enumerate}

\begin{enumerate}
	\item [(A2)] For all $\delta \in [0,1), $ let $\alpha _\delta$,  $L_{1,\delta}$ and $|G_\delta|_ \zeta$ be the contraction rate $\alpha$ given by Equation (\ref{contracting1}) for $G_\delta$,  the constant $L_1$ given by (f1) for $f_\delta$, and the constant $|G|_ \zeta$  defined by Equation $(\ref{jdhfjdh})$, respectively.  Set  $ \beta_\delta:= (\alpha_\delta L_{1,\delta})^\zeta$ and $D_{2, \delta}:=\{\epsilon _{\rho, \delta} L_{1,\delta}^\zeta + |G_\delta|_ \zeta L_{1,\delta}^\zeta\}$. Suppose that, $$\sup _ \delta \beta_\delta <1$$and $$\sup _ \delta D_{2, \delta} < \infty.$$ 
\end{enumerate}We define an \textbf{admissible $R(\delta)$-perturbation} as a family $\{F_{\delta }\}_{\delta \in [0,1)}$, where $F_{\delta }$ satisfies conditions (U1), (U2), (U3), (A1), (A2), as well as (f1), (f2), (f3), (G1), and (G2) for all $\delta$.

\noindent\textbf{Statements of the Main Results.} In this section, we present the main results of this article and provide an explanation of how Theorems \ref{belongss}, \ref{htyttigu}, and Corollary \ref{htyttigui} are proven.

The first result guarantees the existence and uniqueness of an invariant measure for $F$ in the space $S^{\infty}$, which is an equilibrium state if $F$ is continuous.  In particular, all admissible $R(\delta)$-perturbation, $\{F_{\delta }\}_{\delta \in [0,1)}$, has a family of $F_{\delta }$-invariant measures, $\{\mu_{\delta }\}_{\delta \in [0,1)}$.

\begin{athm}
	\label{belongss} The system $F$ has a unique invariant probability, $\mu_0 \in S^{\infty }$. If $F$ is continuous, then $\mu_0$ is an equilibrium state. 
\end{athm}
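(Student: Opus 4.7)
The plan is to realize $\mu_0$ as the unique probability fixed point of the transfer operator $F_\ast$ acting on $S^\infty$, and then exploit the skew-product structure to upgrade this to a physical/equilibrium statement. The proof is essentially a consequence of the machinery developed (or to be cited from \cite{RRR}) applied to the operator-theoretic picture outlined in the introduction.

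First, using the hypotheses (f1)--(f3) on the expanding base and (G1)--(G2) on the contracting fibers to be introduced in Section \ref{sec1}, I would show that $F_\ast : S^\infty \to S^\infty$ satisfies a Lasota--Yorke type inequality against a weaker $L^1$-like norm that is built into the construction of $S^\infty$. Combined with a compact embedding, this yields quasi-compactness via Hennion's theorem. The $\zeta$-H\"older regularity of disintegrations from \cite{RRR} (together with the spectral gap explicitly advertised in the abstract) allows one to rule out nontrivial peripheral spectrum. Then the one-dimensional $1$-eigenspace contains exactly one probability, which I label $\mu_0 \in S^\infty$; this yields existence and uniqueness in $S^\infty$.

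Next, for physicality, I would use that the contracting foliation is almost-everywhere uniformly contracted with discontinuities \emph{parallel} to the fibers, so the orbit of a generic $(x,y)$ is asymptotic to the orbit of $(x,y')$ on the same fiber. Spectral gap gives exponential convergence $F_\ast^n \nu \to \mu_0$ for every $\nu \in S^\infty$, in particular for the Lebesgue reference; the absolutely continuous, $\zeta$-H\"older disintegrations from \cite{RRR} then allow this convergence of densities to be transferred into convergence of Birkhoff averages along Lebesgue-typical orbits, with limit $\int \varphi \, d\mu_0$. Combined with $f$-ergodicity of $m_1$ on the base, this identifies $\mu_0$ as the unique physical measure. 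For the equilibrium state assertion when $F$ is continuous, I would invoke the variational principle, using that the contracting direction contributes zero entropy so that $h_{\mu_0}(F) = h_{m_1}(f)$, and that $m_1$ is by construction the equilibrium state for the base geometric potential; lifting this identity then shows $\mu_0$ attains the pressure of the corresponding potential on the full system.

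The main obstacle I anticipate is the \emph{uniqueness} part within $S^\infty$, not merely the simplicity of the $1$-eigenvalue of $F_\ast$. Because $S^\infty$ is $L^\infty$-like rather than $L^1$-like, the standard Perron--Frobenius passage from ``unique positive eigenvector'' to ``unique invariant probability in the space'' is not automatic, and one must verify that no other $F$-invariant probability has the fiberwise regularity placing it in $S^\infty$. I expect this to reduce to combining the Lasota--Yorke bounds of \cite{RRR} with base ergodicity of $(f, m_1)$, but the bookkeeping around the discontinuity set (which is allowed to be nontrivial) will be the delicate point.
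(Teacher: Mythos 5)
Your plan for existence and uniqueness takes a genuinely different route from the paper, and the route has a gap. You propose quasi-compactness of $\func{F}_\ast$ on $S^\infty$ via a Lasota--Yorke inequality plus a compact embedding, and then Hennion's theorem. But the embedding $S^\infty \hookrightarrow \mathcal{L}^\infty$ is \emph{not} compact: the norm $||\mu||_{S^\infty}=|\phi_x|_\zeta+||\mu||_\infty$ only adds $\zeta$-H\"older control on the marginal density $\phi_x$, while the fiberwise component $\gamma\mapsto\mu|_\gamma$ retains only an essential sup bound with no equicontinuity in $\gamma$ (that regularity is a separate quantity, $|\mu|_\zeta$, which is not part of the $S^\infty$ norm). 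So a bounded sequence in $S^\infty$ can oscillate freely fiber-to-fiber and need not have an $\mathcal{L}^\infty$-convergent subsequence, and the Hennion argument does not get off the ground. The paper avoids this entirely: existence comes from a direct construction (Lemma \ref{kjdhkskjfkjskdjf}, a standard contracting-fiber argument giving an $F$-invariant $\mu_0$ with $\pi_{x\ast}\mu_0=m_1$), membership $\mu_0\in S^\infty$ is a one-line estimate (since $\phi_x\equiv 1$ and $|\int g\,d(\mu_0|_\gamma)|\le|g|_\infty\le 1$), and uniqueness follows not from simplicity of a peripheral eigenvalue but from the exponential convergence to equilibrium of Theorem \ref{5.8} applied to the zero-average difference of two putative invariant probabilities in $S^\infty$. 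This also dissolves the ``Perron--Frobenius passage'' obstacle you flag at the end: uniqueness in $S^\infty$ is immediate once you have $||\func{F}_\ast^n\mu||_\infty\le D_2\beta_1^n||\mu||_{S^\infty}$ on $\mathcal{V}_s$, with no need to rule out nontrivial peripheral spectrum or track the discontinuity set.

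On the second assertion, your approach matches the paper's: both reduce to Ledrappier--Walters and the fact that the uniformly contracting fibers carry zero topological entropy, so that $h_\mu(F)=h_{m_1}(f)$ for any lift $\mu$ of $m_1$ and $P_t(F,\varphi)=P_t(f,\varphi^\ast)$ for potentials constant on fibers; since $m_1$ is the equilibrium state for $(f,\varphi^\ast)$, its unique lift $\mu_0$ is the equilibrium state for $(F,\varphi)$. One caution: you also sketch a physicality argument (convergence of Birkhoff averages for Lebesgue-typical orbits), but Theorem \ref{belongss} itself only asserts existence/uniqueness in $S^\infty$ and the equilibrium state property; physicality is not part of what needs to be proved here, and the paper's proof does not attempt it.
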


The next Theorem \ref{htyttigu} gives a relation between an admissible $R(\delta)$-perturbation, $\{F_{\delta }\}_{\delta \in [0,1)}$, and the variation of the induced family of invariant measures, $\{\mu_{\delta }\}_{\delta \in [0,1)}$. Moreover, it estimates the modulus of continuity on $0$ of the induced function $\delta \longmapsto  \mu_{\delta }$, given by 
\begin{equation*}
	\delta \longmapsto F_\delta \longmapsto \mu _\delta, \ \ \delta \in [0,1),
\end{equation*}with respect to the norm $||\cdot||_\infty$ defined by $$||\mu||_\infty:=\displaystyle {\sup _{\gamma, g}}\{ \left| \int {g} d\mu|_\gamma \right| \},$$ where $\gamma \in M$, $g$ ranges over the set $H_\zeta$ satisfying $H_\zeta(g)\leq 1$, $|g|_\infty \leq 1$ and $\mu|_\gamma$ is defined from the conditional measure $\mu _\gamma$ of the disintegration of $\mu$ along $\mathcal{F}^s$  (see Definition \ref{sdfsdfsdasd}). 

\begin{athm}[Quantitative stability for deterministic perturbations]
	Let $\{F_{\delta }\}_{\delta \in [0,1)}$ be an admissible $R(\delta)$-perturbation.  Denote by $\mu_\delta$ the invariant measure of $F_\delta$ in $S^\infty$, for all $\delta$. Then, there exist constants $D_2 < 0$ and $\delta _1 \in (0,\delta_0)$ such that for all $\delta \in [0,\delta _1)$, it holds
	\begin{equation}\label{stabll}
		||\mu_{\delta }-\mu_{0}||_{\infty}\leq D_2R(\delta)^\zeta \log \delta .
	\end{equation}
	\label{htyttigu}
\end{athm}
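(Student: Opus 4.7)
The plan is to deduce Theorem \ref{htyttigu} as a direct combination of the preceding three theorems, which do the heavy lifting. The strategy is: translate the deterministic perturbation into a stochastic (operator) perturbation via Theorem \ref{rr}, verify that the hypotheses needed to run Theorem \ref{dlogd} are met, and then read off the stability estimate in the weak norm, which in this setting is precisely $\|\cdot\|_\infty$.

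First, I would invoke Theorem \ref{belongss} to fix $\mu_0 \in S^\infty$ as the unique invariant probability for $F_0=F$, and then remark that (shrinking $\delta_1$ if necessary) the same theorem applied to each $F_\delta$ provides the fixed points $\mu_\delta \in S^\infty$ of the transfer operator $\func{F}_{\delta*}$ appearing in the statement. The definitions \ref{add} and \ref{UFL} have been arranged precisely so that the class of admissible $R(\delta)$-perturbations is stable under this interpretation; in particular the function $\delta \mapsto \mu_\delta$ is well defined on some interval $[0,\delta_1)$.

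Next, I would apply Theorem \ref{rr} to the family $\{F_\delta\}_{\delta\in [0,1)}$. This yields that $\{\func{F}_{\delta*}\}_{\delta\in[0,1)}$ is a $(R(\delta),\zeta)$-family of operators with weak space $(\mathcal{L}^\infty,\|\cdot\|_\infty)$ and strong space $(S^\infty,\|\cdot\|_{S^\infty})$. This is the crucial translation step: the deterministic hypothesis on the maps has now been converted into the quantitative operator-theoretic hypothesis required by Theorem \ref{dlogd}.

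Finally, I would apply Theorem \ref{dlogd} with $\func{L}_\delta := \func{F}_{\delta*}$ and the pair of norms above. The uniqueness of the fixed point of $\func{L}_0$ in $B_w$ needed by that theorem is precisely the uniqueness statement of Theorem \ref{belongss} for $F_0$, and the existence of fixed points $\mu_\delta$ of $\func{L}_\delta$ was arranged in the first step. Theorem \ref{dlogd} therefore produces $\delta_0 \in (0,\delta_1)$ such that for all $\delta \in [0,\delta_0)$,
\[
\|\mu_\delta-\mu_0\|_\infty = \|\mu_\delta-\mu_0\|_w \leq O\bigl(R(\delta)^\zeta \log \delta\bigr),
\]
which is exactly \eqref{stabll}. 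There is no substantive obstacle at this stage: all the analytic difficulty (the Lasota–Yorke-type estimates, the $\zeta$-H\"older regularity of the disintegration, and the spectral gap argument underpinning Theorem \ref{dlogd}) has been discharged earlier, and the present theorem is a clean packaging of those results into a statement about deterministic perturbations measured in the strong $L^\infty$-like norm.
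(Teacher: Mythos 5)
Your proposal is correct and takes essentially the same approach as the paper, which proves Theorem \ref{htyttigu} by applying Theorem \ref{rr} (to recognize $\{\func{F}_{\delta*}\}$ as an $(R(\delta),\zeta)$-family of operators on the weak space $(\mathcal{L}^\infty,\|\cdot\|_\infty)$ and strong space $(S^\infty,\|\cdot\|_{S^\infty})$) and then reading off the conclusion from Theorem \ref{dlogd}. Your write-up merely spells out the verification of hypotheses that the paper's one-line proof leaves implicit.
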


Many interesting perturbations of $F$ ensure the existence of a linear $R(\delta)$. For instance, perturbations with respect to topologies defined in the set of the skew-products, induced by the $C^r$ topologies. Therefore, if the function $R(\delta)$ is of the type, $$R(\delta)=K_6 \delta,$$ for all $\delta$ and a constant $K_6$ we immediately get the following corollary.  

\begin{corollary}[Quantitative stability for deterministic perturbations with a linear $R(\delta)$]
	Let $\{F_{\delta }\}_{\delta \in [0,1)} $ be an admissible $R(\delta)$-perturbation, where $R(\delta)$ is defined by $R(\delta)=K_5\delta$. Denote by $\mu_\delta$ the unique invariant probability of $F_\delta$ in $S^\infty$, for all $\delta$. Then, there exist constants $D_2 <0$ and $\delta _1 \in (0,\delta_0)$ such that for all $\delta \in [0,\delta _1)$, it holds\footnote{A question to be answered is: is $O(\delta^\zeta \log \delta )$ an optimal modulus of continuity?}
	\begin{equation*}
		||\mu_{\delta }-\mu_{0}||_{\infty}\leq D_2\delta^\zeta \log \delta.
	\end{equation*}
	\label{htyttigui}
\end{corollary}

\noindent\textbf{Plan of the paper.} The paper is structured as follows:

\begin{itemize}
	\item Section \ref{intro}: in this section, we introduce the type of systems we consider in the paper. Essentially, it is a class of systems that comprises a set of piecewise partially hyperbolic dynamics $F(x,y) = (f(x), G(x,y))$. The system $F$ has a non-uniformly expanding basis map $f$, and a fiber map $G$ that uniformly contracts $m_1$-a.e. vertical fiber $\gamma \in M$, where $m_1$ is an $f$-invariant equilibrium state. Additionally, in this section, we state the main results and definitions. For instance, we introduce the concept of \textbf{admissible $R(\delta)$-perturbations};
	
	\item Section \ref{ooisidrosr}: we present some examples;
	
	\item Section \ref{seccc}: we present some tools and preliminary results, some of which have already been published in the literature. Most of them are from \cite{GLU} and \cite{RRR}. We use these tools to introduce the functional spaces discussed in the previous paragraphs; 
	\item Section \ref{kjrthkje}: we prove some basic results satisfied by \textbf{admissible} \linebreak[4] $R(\delta)$-\textbf{perturbations} which are important to obtain Theorem \ref{htyttigu} and Lemma \ref{thshgf}; 
	\item Section \ref{perturbationoperators}: we introduce the definition of \textbf{$(R(\delta), \zeta)$-family of operators} and present results relating this family to \textbf{admissible $R(\delta)$-perturbations};
	\item Section \ref{sofkjsdkgfhksjfgd}: we prove Theorem \ref{belongss};
	\item Section \ref{kkdjfkshfdsdfsttr}: we prove Theorems \ref{htyttigu} and Corollary \ref{htyttigui}.
	
\end{itemize}

\noindent\textbf{Acknowledgments.} We are thankful to Stefano Galatolo,  Wagner Ranter and  Davi Lima for all the valuable comments and fruitful discussions regarding this work.

Special thanks are extended to Sophia Homolka, Phillip Homolka, Krerley Oliveira and Gabriel Montoya.

This work was partially supported by CNPq (Brazil) Grants 409198/2021-8 and Alagoas Research Foundation-FAPEAL (Brazil) Grants 60030.0000000161/2022.

\section{Examples}\label{ooisidrosr}
In what follows, we present some examples which satisfy the assumptions described in the previous section. 

\begin{example} \label{new}

	Let $f_0: M \longrightarrow M$ be a map defined by $f_0(x,y)=(\id(x),3y\mod1)$, where $M:=[0,1]^2$ is endowed with the $\mathbb{T}^2$ topology and $\id$ is the identity map on $[0,1]$. On $[0,1]^2$, we consider the metric $d_1((x_0, y_0), (x_1,y_1)) = \max {d(x_0,y_0), d(x_1,y_1)}$, where $d$ is the metric of $[0,1]$. This system has $(0,0) = (1,1)$ and all points of the horizontal segment $[0,1]\times \{1/2\}$ as fixed points.
	
	Consider the partition $P_0= [0,1/3] \times [0,1]$, $P_1= [1/3,2/3] \times [0,1]$, and $P_2= [2/3,1]\times[0,1]$. In particular, the fixed point $p_0=(1/2,1/2) \in \mathcal{A}:= \inte P_1$ (where $\inte P_1$ means the interior of $P_1$).
	
	For a given $\delta>0$, consider a perturbation $f$ of $f_0$, given by $f(x,y)=(g(x), 3y\mod1)$ such that $g(1/2)=1/2$, $0< g'(1/2) <1$ and $g$ is $\delta$-close to $\id(x)=x$ in the $C^2$ topology. Moreover, suppose that $|g'(x)| \geq k_0 > 1$ for all $x \in P_0 \cup P_2$. In particular, without loss of generality, suppose that $1 - \delta < g'(1/2) < 1+ \delta <3$ and $\deg(g)=1$. Below, the reader can see the graph of such a function $g$.

	\begin{figure}[!htb]
		\centering
		\includegraphics[width=0.5\textwidth]{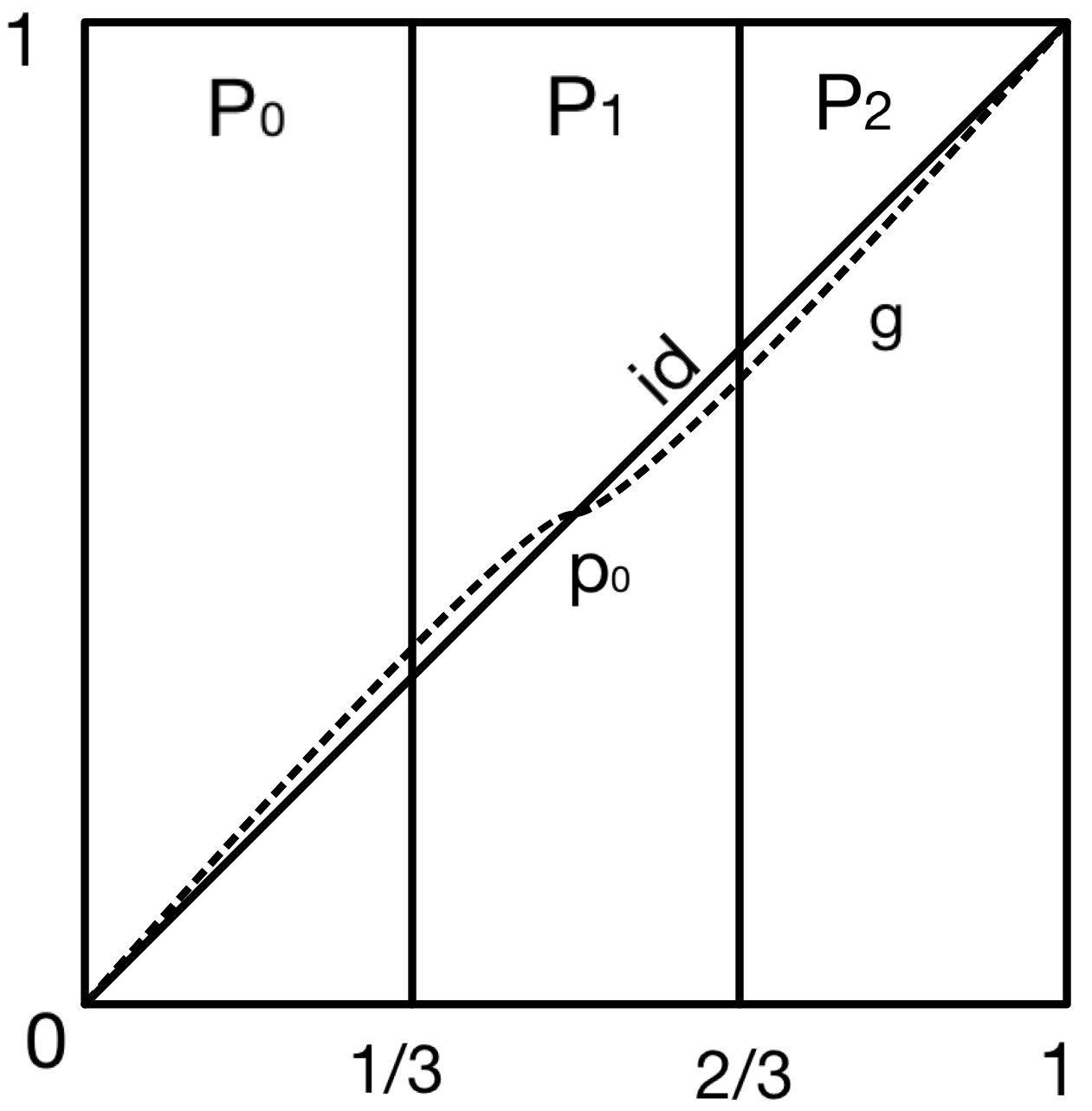}
		\caption{The graph of the perturbed map $g$.}
	\end{figure}Thus, we have
	$$
	Df (1/2, 1/2)=\left( \begin{array}{ll}
		g'(1/2) & 0 \\
		0 & 3
	\end{array}\right).
	$$And since $p_0=(1/2,1/2)$ is still a fixed for $f$ we have that $p_0$ becomes a saddle point (for $f$) as in the next Example \ref{sesprowerpo}. Moreover, since $\deg(g)=1$, we have that $\deg(f_0)=\deg(f)=3$, $q=1$, $\sigma = 3$, $L(x,y) := 1/ g'(x), \forall (x,y) \in \mathbb{T}^2$. In general, we have the following expression for the derivative of $f$:
	$$
	Df(x,y)= \left( \begin{array}{ll}
		g'(x) & 0 \\
		0 & 3
	\end{array}\right),
	$$ for all $(x,y) \in [0,1]^2$. This expression ensures that $\rho$ is $\zeta$-Holder for $0< \zeta \leq 1$. Therefore, for every $\epsilon>0$, there exists $\delta>0$ such that $L(x,y) \in (1-\epsilon, 1+\epsilon)$ for all $(x,y) \in [0,1]^2$, so that $L_1$ can be defined as $L_1 := 1 + \epsilon$.
	
	Since $g:[0,1] \longrightarrow [0,1]$ is $\delta$-close to $\id:[0,1] \longrightarrow [0,1]$, we have that $1-\delta < g'(x) < 1+\delta$ for all $x \in [0,1]$ and $3(1-\delta) < \det Df(x,y) < 3(1+\delta)$ for all $(x,y) \in [0,1]^2$. Thus, 
\begin{eqnarray*}
\sup _{(x,y)} \log \dfrac{1}{\det Df (x,y)} - \inf _{(x,y)} \log \dfrac{1}{\det Df (x,y)} &\leq& \log \dfrac{1}{3(1-\delta)} - \log \dfrac{1}{3(1+\delta)} \\&=& \log \dfrac{(1+\delta)}{(1-\delta)}.
\end{eqnarray*}
Therefore, it holds
\begin{equation}\label{dfdfds}
\sup _{(x,y) \in [0,1]^2} \log \dfrac{1}{\det Df (x,y)} - \inf _{(x,y) \in [0,1]^2} \log \dfrac{1}{\det Df (x,y)} \leq \log \dfrac{(1+\delta)}{(1-\delta)}.
\end{equation}Note that, since $\e ^{\epsilon _\rho} \approx 1$, $L_1 \approx 1$, $0<\zeta \leq 1$ and $q(L_1^\zeta[1+(L_1-1)^\zeta]) \approx 1$ we have that 
\begin{equation}\label{jhdjhjagsdfh}
\e ^{\epsilon_\rho} \cdot \left( \dfrac{(\deg(f) - q)\sigma ^{-\zeta} + qL_1^\zeta[1+(L_1-1)^\zeta] }{\deg(f)}\right) \approx \dfrac{2(3^{-\zeta}) + 1}{3} < 1.
\end{equation}The above relation shows that the system satisfies (\ref{kdljfhkdjfkasd}).

 Now we will prove that this system satisfies (\ref{f32}) of (f3). Note that,
\begin{equation}\label{dkjdk}
\rho (x,y) := \dfrac{1}{|\det Df (x,y)|} = \dfrac{1}{3g'(x)}. 
\end{equation}Besides that,  since $g$ is $\delta$-close to $\id$ in the $C^2$ topology, we have 
\begin{equation}\label{dkjdk}
-\delta \leq g(x) -x \leq \delta,
\end{equation}
\begin{equation}\label{dkjdk}
1-\delta \leq g'(x) \leq1+ \delta
\end{equation}and
\begin{equation}\label{dkjdk}
-\delta \leq g''(x) \leq \delta.
\end{equation}In what follows, the point $x_2$ is obtained by an application of the Mean Value Theorem. Then, we have

\begin{eqnarray*}
\dfrac{\left| \dfrac{1}{\left| \det Df(x_0,y_0)\right|}- \dfrac{1}{\left| \det Df(x_1,y_1)\right|}\right|}{d_2((x_0,y_0), (x_1,y_1))^\zeta}&=&\dfrac{\left| \dfrac{1}{\left| 3g'(x_0)\right|}- \dfrac{1}{\left|3g'(x_1)\right|}\right|}{\max \{d_1(x_0,x_1), d_1(y_0,y_1)\}^\zeta}\\&\leq&\dfrac{1}{3}\dfrac{\left| g'(x_1)-g'(x_0)\right|}{d_1(x_0,x_1)^\zeta}\dfrac{1}{|g'(x_1)g'(x_0)|}\\&\leq&\dfrac{1}{3}\left| g''(x_2)\right|\dfrac{1}{|g'(x_1)g'(x_0)|}\\&\leq&\dfrac{1}{3}\delta \dfrac{1}{(1-\delta)^2}=\dfrac{1}{3}\sqrt{\delta}\sqrt{\delta} \dfrac{1}{(1-\delta)^2}\\&\leq&\dfrac{1}{3}\dfrac{1}{1+\delta}\sqrt{\delta} \dfrac{1}{(1-\delta)^2}; \ \text{for small} \ \delta \\&\leq&\dfrac{\sqrt{\delta}}{(1-\delta)^2} \inf_{x\in [0,1]} \dfrac{1}{3g'(x)}.
\end{eqnarray*}Thus,

\begin{equation}\label{jhsdgjfsa}
H_\zeta (\rho) \leq \dfrac{\sqrt{\delta}}{(1-\delta)^2} \inf_{x\in [0,1]} \rho.
\end{equation}If $\delta$ is small enough, by equations (\ref{dfdfds}) and (\ref{jhsdgjfsa}), the perturbed system satisfies (\ref{f31}) and (\ref{f32}) of (f3).

We emphasize that this example satisfies the hypotheses of both articles, \cite{VAC} and \cite{VM}. More precisely, it satisfies (H1), (H2) and (P) of \cite{VAC} and (H1), (H2) and (P) of \cite{VM}. In fact, by the variational principle, we have that $h(f)>0$.  
\end{example}

\begin{example}\label{sesprowerpo} Now we present a general idea to generate examples by perturbing the identity or an expanding map close to the identity.
	
	Let $f_0: \mathbb{T}^d \longrightarrow \mathbb{T}^d$ be an expanding map. Choose a covering $\mathcal{P}$ and an atom $P_1 \in \mathcal{P}$ that contains a periodic point (possibly a fixed point) $p$. Next, consider a perturbation $f$, of $f_0$, within $P_1$ using a pitchfork bifurcation in such a way that $p$ becomes a saddle point for $f$. Consequently, $f$ coincides with $f_0$ in $P_1^c$, where we have uniform expansion. The perturbation can be designed to satisfy condition (f1). This ensures that $f$ is never overly contracting in $P_1$, and it remains topologically mixing. However, it is important to note that a small perturbation with these properties may not always exist. If such a perturbation does exist, then condition (f3) can be satisfied. In this case, $m_1$ is absolutely continuous with respect to the Lebesgue measure, which is an expanding conformal and positive measure on open sets. Consequently, there can be no periodic attractors.
\end{example}
\begin{example}\label{sesprowerpoo}
In the previous example, assume that $f_0$ is diagonalizable, with eigenvalues $1< 1+ a<\lambda$, associated with $e_1, e_2$, respectively, where $x_0$ is a fixed point. Fix $a,\epsilon> 0$ such that $\log(\frac{1+ a}{1- a})<\epsilon$ and
\begin{equation*}
	\e ^\epsilon\left(\frac{(\deg(f_0)- 1)(1+ a)^{-\zeta}+ (1/(1- a))^{\zeta}[1+ (a/(1- a))^\zeta]}{\deg(f_0)}\right)< 1.
\end{equation*}

Note that any smaller $a> 0$ will still satisfy these equations.

Let $\mathcal U$ be a finite covering of $M$ by open domains of injectivity for $f_0$. Redefining sets in $\mathcal U$, we may assume $x_0= (m_0, n_0)$ belongs to exactly one such domain $U$. Let $r> 0$ be small enough that $B_{2r}(x_0)\subset U$. Define $\rho=\eta_r\ast g$, where $\eta_r(z)= (1/r^2)\eta(z/r)$, $\eta$ denotes the standard mollifier, and
\begin{equation*}
	g(m,n)=\begin{cases}
		\lambda(1- a),&\text{if }(m, n)\in B_r(x_0);\\
		\lambda(1+ a),&\text{otherwise.}
	\end{cases}
\end{equation*}
Finally, define a perturbation $f$ of $f_0$ by
\begin{equation*}
	f(m, n)= (m_0+\lambda (m- m_0),n_ 0+(\rho(m, n)/\lambda) (n- n_0)).
\end{equation*}
Then $x_0$ is a saddle point of $f$ and the desired conditions are satisfied for $\mathcal A= B_{2r}(x_0)$, $L_1= 1/(1- a)$ and $\sigma= 1+ 2a$. The only non-trivial condition is (f3). To show it, note that

\begin{equation*}
	\rho(x)-\rho(y)=\int_S \frac{2a}{\lambda(1- a^2)}\eta_r(z)\,dz-\int_{S'}\frac{2a}{\lambda(1- a^2)}\eta_r(z)\,dz,
\end{equation*}
where $S=\{z\in\mathbb R^2: x- z\in B_r(x_ 0), y- z\notin B_r(x_ 0)\}$ and $S'=\{z\in\mathbb R^2: y- z\in B_r(x_ 0), x- z\notin B_r(x_ 0)\}$. Take $x, y\in\mathbb R^2$ and write $|x- y|= qr$, $A_q=\{z\in\mathbb R^2: 1- q< |z|< 1\}$. We have
\begin{equation*}
	\frac{|\rho(x)-\rho(y)|}{|x- y|^\zeta}\leq\frac{2a\eta_r(S)}{\lambda(1- a^2)q^\zeta r^\zeta}\leq\frac{2a\eta(A_q)/q^\zeta}{\lambda(1- a^2)}.
\end{equation*}
Since $N=\sup_{q> 0}\eta(A_q)/q^\zeta< +\infty$, we can take $a$ so small that $2aN/(1- a)<\epsilon$, therefore $H_\zeta(\rho)<\epsilon\inf\rho$.
\end{example}

\begin{example}{(Discontinuous Maps)} Let $F=(f, G)$ be the measurable map, where $f$ is from the previous Example \ref{new}. Consider the real numbers $\alpha _1$ and $\alpha_2$ s.t $ 0\leq \alpha _1 < \alpha _2 < 1$. Let  $G:[0,1] \times [0,1] \longrightarrow [0,1]$ be the function defined by
		$$
	G(x,y)=\left\{\begin{array}{ll}
	\alpha_1 y & \text { if } 0 \leqslant x \leqslant \frac{1}{2}, \\
		\alpha_2 y & \text { if } \frac{1}{2}<x \leqslant 1.
	\end{array}\right.
	$$It is easy to see that $G$ is discontinuous on the set $\{\frac{1}{2}\} \times [0,1] $. Moreover, $G$ satisfies (H2) since $|G|_\zeta =0$ (see equation (\ref{jdhfjdh})), for all $\zeta$. Thus, $G$ is a $\alpha _3$-contraction, where $\alpha_3 = \max \{\alpha_1, \alpha _2\}$. Since $L_1=1$ we have that $(\alpha_3L_1)^\zeta <1$, for all $\zeta$. Therefore, $F$ satisfies all hypothesis (f1), (f2), (f3), (G1), (G2) and $(\alpha_3L_1)^\zeta <1$.

\end{example}

\begin{example}{(Discontinuous Maps)} Let $F=(f, G)$ be the measurable map, where $f$ is again from the previous Example \ref{new}. Consider a real number $0 \leq \alpha_2 < 1$ and $\zeta$-H\"older functions $h_1:[0,\frac{1}{2}] \longrightarrow [0,1]$, $h_2:[\frac{1}{2},1] \longrightarrow [0,1]$ such that $h_1(\frac{1}{2}) \neq h_2(\frac{1}{2}) $ and $ 0\leq h_1, h_2 < \alpha _2 < 1$. Let  $G:[0,1] \times [0,1] \longrightarrow [0,1]$ be the function defined by
	$$
	G(x,y)=\left\{\begin{array}{ll}
		h_1(x) y & \text { if } 0 \leqslant x \leqslant \frac{1}{2}, \\
		h_2(x) y & \text { if } \frac{1}{2}<x \leqslant 1.
	\end{array}\right.
	$$It is easy to see that $G$ is discontinuous on the set $\{\frac{1}{2}\} \times [0,1] $. Moreover, $G$ satisfies (H2) since $|G|_\zeta \leq \max \{ |h_1|_\zeta, |h_2|_\zeta\}$. Thus, $G$ is a $\alpha _2$-contraction. Since $L_1=1$ we have that $(\alpha_2L_1)^\zeta <1$. Therefore, $F$ satisfies all hypothesis (f1), (f2), (f3), (G1), (G2) and $(\alpha_2L_1)^\zeta <1$.

\end{example}

While Theorem \ref{htyttigui} deals with a linear $R(\delta)$, this function can take other forms, as shown in the next example.
\begin{example}
Let us consider $F:M\times [0,1] \longrightarrow M\times [0,1]$ such that $F(x,y) = (f(x), G(x,y))$, where $G(x,y) = \lambda y$ for all $(x,y) \in M\times [0,1]$ and $0 < \lambda <1$. Suppose that $\delta_0$ is small enough in a way that $0 < \lambda + \sqrt{\delta}<1$ for all $\delta \in (0, \delta _0]$. Define $\{F_\delta\}_{\delta \in [0,1)}$ by $f_\delta := f$ for all $\delta \in [0,1)$, $G_\delta (x,y) = \lambda y$ for all $(x,y) \in M\times [0,1]$ if $\delta > \delta _0$ and $G_\delta(x,y) = (\sqrt{\delta} +\lambda)y$ for all $(x,y) \in M\times [0,1]$ if $\delta \in (0, \delta _0]$. 

We have that $\{F_\delta\}_{\delta \in [0,1)}$ is an \textbf{$R(\delta)$-perturbation} with $R(\delta) := \sqrt{\delta}$. Indeed,

\begin{eqnarray*}
 |G(x,y) - G_\delta(x,y)| &=&| \lambda y -  (\sqrt{\delta} +\lambda)y| 
 \\ &\leq&|\sqrt{\delta}y|
 \\ &\leq&\sqrt{\delta},
\end{eqnarray*}for all $\delta \in [0,1)$. Thus (U2.3) is satisfied. The other conditions are straightforward to check. 
\end{example}

\section{Preliminary Results}\label{seccc}

In this section, we present some preliminary and well-established results and introduce the functional analytic framework suitable for our approach. Some of these results are taken from \cite{RRR} and \cite{GLU}.

\subsection{Weak and Strong Spaces}
\subsubsection{$L^{\infty}$-like spaces.}\label{spa}

In this subsection, we define the vector spaces of signed measures that we will be working with. Specifically, we define the norm of the left-hand side of equation (\ref{stabll}). To do this, we need to briefly review some facts about the disintegration of measures, state Rokhlin's Disintegration Theorem, and establish certain notations.

\subsubsection*{Rokhlin's Disintegration Theorem}

Consider a probability space $(\Sigma,\mathcal{B}, \mu)$ and a partition $%
\Gamma$ of $\Sigma$ into measurable sets $\gamma \in \mathcal{B}$. Denote by $%
\pi : \Sigma \longrightarrow \Gamma$ the projection that associates to each
point $x \in M$ the element $\gamma _x$ of $\Gamma$ that contains $x$. That is, 
$\pi(x) = \gamma _x$. Let $\widehat{\mathcal{B}}$ be the $\sigma$-algebra of 
$\Gamma$ provided by $\pi$. Precisely, a subset $\mathcal{Q} \subset \Gamma$
is measurable if, and only if, $\pi^{-1}(\mathcal{Q}) \in \mathcal{B}$. We
define the \textit{quotient} measure $\mu _x$ on $\Gamma$ by $\mu _x(%
\mathcal{Q})= \mu(\pi ^{-1}(\mathcal{Q}))$.

The proof of the following theorem can be found in \cite{Kva}, Theorem
5.1.11 (items a), b) and c)) and Proposition 5.1.7 (item d)).

\begin{theorem}
	(Rokhlin's Disintegration Theorem) Suppose that $\Sigma $ is a complete and
	separable metric space, $\Gamma $ is a measurable partition of $\Sigma $ and $\mu $ is a probability on $\Sigma $. Then, $\mu $ admits a
	disintegration relative to $\Gamma $. That is, there exists a family $\{\mu _{\gamma}\}_{\gamma \in \Gamma }$ of probabilities on $\Sigma $ and a quotient measure $\mu _{x}$, such that:
	
	\begin{enumerate}
		\item[(a)] $\mu _\gamma (\gamma)=1$ for $\mu _x$-a.e. $\gamma \in \Gamma$;
		
		\item[(b)] for all measurable set $E\subset \Sigma $ the function $\Gamma
		\longrightarrow \mathbb{R}$ defined by $\gamma \longmapsto \mu _{\gamma
		}(E), $ is measurable;
		
		\item[(c)] for all measurable set $E\subset \Sigma $, it holds $\mu (E)=\int 
		{\mu _{\gamma }(E)}d\mu _{x}(\gamma )$.

	\label{rok}
	\item [(d)] If the $\sigma $-algebra $\mathcal{B}$ on $\Sigma $ has a countable
	generator, then the disintegration is unique in the following sense. If $(\{\mu _{\gamma }^{\prime }\}_{\gamma \in \Gamma },\mu _{x})$ is another disintegration of the measure $\mu $ relative to $\Gamma $, then $\mu
	_{\gamma }=\mu _{\gamma }^{\prime }$, for $\mu _{x}$-almost every $\gamma
	\in \Gamma $.
	\end{enumerate}
\end{theorem}

\subsubsection{The $\mathcal{L}^{\infty}$ and $S^\infty$ spaces}\label{jdfjdhkjf}

Let $\mathcal{SB}(\Sigma )$ be the space of Borel signed measures on $\Sigma : = M \times K$. Given $\mu \in \mathcal{SB}(\Sigma )$, denote by $\mu ^{+}$ and $\mu ^{-}$
the positive and the negative parts of its Jordan decomposition, $\mu =\mu
^{+}-\mu ^{-}$ (see remark {\ref{ghtyhh}). Let $\pi _{1}:\Sigma \longrightarrow M$ be the projection defined by $\pi_x (x,y)=x$, denote by $\pi _{1\ast }:$}$\mathcal{SB}(\Sigma )\rightarrow \mathcal{SB}(M)${\ the pushforward map associated to $\pi _{1}$. Denote by $\mathcal{AB}$ the set of signed measures $\mu \in \mathcal{SB}(\Sigma )$ such that its associated positive and negative marginal measures, $\pi _{1\ast }\mu ^{+}$ and $\pi _{1\ast }\mu ^{-},$ are absolutely continuous with respect to $m_{1}$. That is,
\begin{equation*}
	\mathcal{AB}=\{\mu \in \mathcal{SB}(\Sigma ):\pi _{1\ast }\mu ^{+}<<m_{1}\ \ 
	\mathnormal{and}\ \ \pi _{1\ast }\mu ^{-}<<m_{1}\}.  \label{thespace1}
\end{equation*}%
}Given a \emph{probability measure} $\mu \in \mathcal{AB}$ on $\Sigma$,
Theorem \ref{rok} describes a disintegration $\left( \{\mu _{\gamma
}\}_{\gamma },\mu _{x}\right)$ along $\mathcal{F}^{s}$ by a family of probability measures $\{\mu _{\gamma }\}_{\gamma }$, defined on the stable leaves.  Moreover, since 
$\mu \in \mathcal{AB}$, $\mu _{x}$ can be identified with a non-negative
marginal density $\phi _{1}:M\longrightarrow \mathbb{R}$, defined almost
everywhere, where $|\phi _{1}|_{1}=1$. For a non-normalized
positive measure $\mu \in \mathcal{AB}$ we can define its disintegration following the same idea.  In this case, $\{ \mu _{\gamma } \}$ is still a family of probability measures, $\phi _{1}$ is still defined and $|\phi _{1}|_{1}=\mu (\Sigma )$.

\begin{definition}
	Let $\pi _{2}:\Sigma \longrightarrow K$ be the projection defined by $
	\pi _{2}(x,y)=y$. Consider $\pi
	_{\gamma ,2}:\gamma \longrightarrow K$, the restriction of the map $\pi
	_{2}$ to the vertical leaf $\gamma $, and the
	associated pushforward map $\pi _{\gamma ,2\ast }$. Given a positive measure 
	$\mu \in \mathcal{AB}$ and its disintegration along the stable leaves $%
	\mathcal{F}^{s}$, $\left( \{\mu _{\gamma }\}_{\gamma },\mu _{x}=\phi
	_{1}m_{1}\right) $, we define the \textbf{restriction of $\mu $ on $\gamma $}
	and denote it by $\mu |_{\gamma }$ as the positive measure on $K$ (not
	on the leaf $\gamma $) defined, for all mensurable set $A\subset K$, as 
	\begin{equation*}
	\mu |_{\gamma }(A)=\pi _{\gamma ,2\ast }(\phi _{1}(\gamma )\mu _{\gamma
	})(A).
	\end{equation*}%
	For a given signed measure $\mu \in \mathcal{AB}$ and its Jordan
	decomposition $\mu =\mu ^{+}-\mu ^{-}$, define the \textbf{restriction of $%
		\mu $ on $\gamma $} by%
	\begin{equation*}
	\mu |_{\gamma }=\mu ^{+}|_{\gamma }-\mu ^{-}|_{\gamma }.
	\end{equation*}%
	\label{restrictionmeasure}
\end{definition}

\begin{remark}
	\label{ghtyhh}As proved in Appendix 2 of \cite {GLU},  restriction $%
	\mu |_{\gamma }$ does not depend on decomposition. Precisely, if $\mu
	=\mu _{1}-\mu _{2}$, where $\mu _{1}$ and $\mu _{2}$ are any positive
	measures, then $\mu |_{\gamma }=\mu _{1}|_{\gamma }-\mu _{2}|_{\gamma }$ $%
	m_{1}$-a.e. $\gamma \in M$. 
\end{remark}

Let $(X,d)$ be a compact metric space, $g:X\longrightarrow \mathbb{R}$ be a
$\zeta$-H\"older function, and $H_\zeta(g)$ be its best $\zeta$-H\"older's constant. That is,
\begin{equation}\label{lipsc}
\displaystyle{H_\zeta(g)=\sup_{x,y\in X,x\neq y}\left\{ \dfrac{|g(x)-g(y)|}{d(x,y)^\zeta}%
	\right\} }.
\end{equation}In what follows, we present a generalization of the Wasserstein-Kantorovich-like metric given in \cite{GLU} and \cite{GP}. 
\begin{definition}
	Given two signed measures, $\mu $ and $\nu $ on $X,$ we define the \textbf{\
		Wasserstein-Kantorovich-like} distance between $\mu $ and $\nu $ by 
	\begin{equation*}
	W_{1}^{\zeta}(\mu ,\nu ):=\sup_{H_\zeta(g)\leq 1,|g|_{\infty }\leq 1}\left\vert \int {\
		g}d\mu -\int {g}d\nu \right\vert .
	\end{equation*}%
	\label{wasserstein}
\end{definition}Since  $\zeta$ is a constant,  we denote%

\begin{equation}
||\mu ||_{W}:=W_{1}^{\zeta}(0,\mu ),  \label{WW}
\end{equation}and observe that $||\cdot ||_{W}$ defines a norm on the vector space of signed measures defined on a compact metric space. It is worth remarking that this norm is equivalent to the standard norm of the dual space of $\zeta$-H\"older functions.

\begin{definition}\label{sdfsdfsdasd}
	Let $\mathcal{L}^{\infty }\subseteq \mathcal{AB}(\Sigma )$ be the set of signed measures defined as%
	\begin{equation*}
	\mathcal{L}^{\infty }=\left\{ \mu \in \mathcal{AB}:\esssup ({W_{1}^{\zeta}(\mu
		^{+}|_{\gamma },\mu ^{-}|_{\gamma }))}<\infty \right\},
	\end{equation*}%
	where the essential supremum is taken over $M$ with respect to $m_{1}$.
	Define the function $||\cdot ||_{\infty }:\mathcal{L}^{\infty
	}\longrightarrow \mathbb{R}$ by%
	\begin{equation*}
	||\mu ||_{\infty }=\esssup ({W_{1}^{\zeta}(\mu ^{+}|_{\gamma },\mu ^{-}|_{\gamma
		}))}.
	\end{equation*}Finally, consider the following set of signed measures on $\Sigma $%
\begin{equation}\label{sinfi}
S^{\infty }=\left\{ \mu \in \mathcal{L}^{\infty };\phi _{1}\in
H_\zeta \right\},
\end{equation}%
and the function, $||\cdot ||_{S^{\infty }}:S^{\infty }\longrightarrow 
\mathbb{R}$, defined by%
\begin{equation*}
||\mu ||_{S^{\infty }}=|\phi _{1}|_{\zeta}+||\mu ||_{\infty }.
\end{equation*}
\end{definition}

\begin{remark}
	A straightforward computation yields $||\cdot ||_W \leq ||\cdot||_\infty$. Then, supposing that $\{F_\delta\}_{\delta \in [0,1)}$ satisfies Theorem (\ref{htyttigu}), it holds $$||\mu_{\delta }-\mu_{0}||_{W}\leq AR(\delta)^\zeta \log \delta ,$$for some $A>0$. Therefore, for all $\zeta$-Holder function $g:\Sigma \longrightarrow \mathbb{R}$, the following estimate holds $$\left|\int{g}d\mu_\delta - \int{g}d\mu_0\right| \leq A ||g||_{\zeta} R(\delta) ^\zeta \log \delta,$$where $||g||_{\zeta} = ||g||_\infty + H_\zeta(g)$ (see equation (\ref{lipsc}), for the definition of $H_\zeta(g)$). Thus, for all $\zeta$-Holder function, $g:\Sigma \longrightarrow\mathbb{R}$, the limit $\displaystyle{\lim _{\delta \longrightarrow 0} {\int{g}d\mu_\delta} = \int{g}d\mu_0}$ holds, with a rate of convergence smaller than or equal to $R(\delta)^\zeta \log \delta$.
\end{remark}

The proof of the next proposition is straightforward and can be found in 
\cite{L}.

\begin{proposition}
	$\left( \mathcal{L}^{\infty },||\cdot ||_{\infty }\right) $ and $\left(
	S^{\infty },||\cdot||_{S^{\infty }}\right) $ are normed vector spaces.
\end{proposition}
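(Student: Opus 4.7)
The plan is to reduce both statements to two observations: (i) $||\cdot||_W$ from \eqref{WW} is a norm on the vector space of signed Borel measures on the compact fiber $K$, and (ii) the leaf-restriction map $\mu\mapsto\mu|_\gamma$ is linear in $\mu$ for $m_1$-a.e. $\gamma$, which is precisely Remark \ref{ghtyhh}. Combined with the trivial identity
\begin{equation*}
W_1^\zeta(\mu^+|_\gamma,\mu^-|_\gamma) = ||\mu^+|_\gamma - \mu^-|_\gamma||_W = ||\mu|_\gamma||_W,
\end{equation*}
this realizes $||\cdot||_\infty$ as the essential supremum of the leafwise norm $\gamma\mapsto||\mu|_\gamma||_W$, so the norm axioms for $\mathcal L^\infty$ should descend almost automatically.

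For the vector-space structure, linearity of the marginal pushforward $\pi_{x\ast}$ together with the Radon--Nikodym theorem shows that $\mathcal{AB}$ is a subspace of $\mathcal{SB}(\Sigma)$. Given $\mu,\nu\in\mathcal L^\infty$ and scalars $a,b$, my plan is to decompose $a\mu+b\nu$ as a difference of positive measures in any convenient way (grouping positive and negative parts of $a\mu$ and $b\nu$ after absorbing signs) and invoke Remark \ref{ghtyhh} to conclude $(a\mu+b\nu)|_\gamma = a\mu|_\gamma + b\nu|_\gamma$ for $m_1$-a.e. $\gamma$. Taking $||\cdot||_W$ leafwise and essential suprema then yields $||a\mu+b\nu||_\infty \le |a|\,||\mu||_\infty + |b|\,||\nu||_\infty < \infty$, so closure under linear combinations, the triangle inequality, and absolute homogeneity (the last by setting $\nu=0$) all follow simultaneously.

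The delicate point is positive definiteness: I must show that $||\mu||_\infty=0$ forces $\mu=0$. I would fix a Hahn decomposition $\Sigma=P^+\sqcup P^-$ associated to the Jordan decomposition $\mu=\mu^+-\mu^-$, so that $\mu^-(P^+)=\mu^+(P^-)=0$. Disintegrating $\mu^\pm$ along $\mathcal F^s$, the probability $\mu^\pm_\gamma$ is concentrated on $P^\pm\cap\gamma$ for $m_1$-a.e. $\gamma$. The vanishing hypothesis, pushed to $K$, gives $\phi^+_x(\gamma)\mu^+_\gamma = \phi^-_x(\gamma)\mu^-_\gamma$ as measures on the fiber; evaluating this equality on $\pi_{\gamma,y}(P^-\cap\gamma)$ forces $\phi^-_x(\gamma)=0$, and symmetrically $\phi^+_x(\gamma)=0$, so that Theorem \ref{rok}(c) yields $\mu^+=\mu^-=0$. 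This interplay between the Jordan decomposition and the Rokhlin disintegration is the main technical obstacle, but it uses only standard measure theory together with the mutual singularity of $\mu^+$ and $\mu^-$.

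For $(S^\infty,||\cdot||_{S^\infty})$ everything is inherited. The marginal density map $\mu\mapsto\phi_x$ is linear by linearity of pushforward and of Radon--Nikodym derivatives, and $|\cdot|_\zeta=H_\zeta(\cdot)+|\cdot|_\infty$ is a standard norm on $H_\zeta$. Hence $||\cdot||_{S^\infty}$ is a sum of a norm on marginal densities and the norm $||\cdot||_\infty$ on $\mathcal L^\infty$; the three norm axioms pass termwise, with positive definiteness following immediately from the corresponding property of $||\cdot||_\infty$ already established.
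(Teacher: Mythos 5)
Your proposal is correct, and since the paper delegates the proof to the cited thesis and only calls it straightforward, there is no in-paper argument to compare against; your route is the natural one. The key reduction — realizing $||\mu||_{\infty}$ as $\esssup_{\gamma}||\mu|_{\gamma}||_{W}$ with $\mu|_{\gamma}$ well defined and linear in $\mu$, so that the norm axioms descend from those of $||\cdot||_{W}$ on $\mathcal{SB}(K)$ — is exactly what is needed.

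Two points deserve a line each. First, Remark \ref{ghtyhh} only asserts that $\mu|_{\gamma}$ is independent of the chosen decomposition into positive measures; to pass from this to the linearity identity $(a\mu + b\nu)|_{\gamma} = a\,\mu|_{\gamma} + b\,\nu|_{\gamma}$ you also need additivity and positive homogeneity of $\lambda \mapsto \lambda|_{\gamma}$ on positive measures of $\mathcal{AB}$, which follow from uniqueness of the Rokhlin disintegration (Theorem \ref{rok}, item (d)) applied to $\lambda_{1}+\lambda_{2}$ and to $c\lambda$, but this step is implicit in your sketch and should be made explicit. Second, the Hahn-decomposition detour for positive definiteness is correct but longer than needed: once $||\mu||_{\infty}=0$ forces $\mu^{+}|_{\gamma}=\mu^{-}|_{\gamma}$ for $m_{1}$-a.e.\ $\gamma$ (because $W_{1}^{\zeta}$ separates finite Borel measures on $K$, $\zeta$-H\"older functions being uniformly dense in $C(K)$), integrating the Rokhlin formula in Theorem \ref{rok}(c) for $\mu^{+}$ and $\mu^{-}$ against any Borel set $E\subset\Sigma$ already gives $\mu^{+}(E)=\mu^{-}(E)$, hence $\mu=0$, with no need to track where $\mu^{\pm}_{\gamma}$ concentrate; this also sidesteps the subtlety that $\mu^{\pm}_{\gamma}(P^{\mp})=0$ only holds for $\mu^{\pm}_{x}$-a.e.\ $\gamma$ rather than $m_{1}$-a.e.\ $\gamma$ (though, as you implicitly use, that suffices since $\phi^{\pm}_{x}(\gamma)\mu^{\pm}_{\gamma}$ vanishes wherever $\phi^{\pm}_{x}(\gamma)=0$). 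The treatment of $S^{\infty}$ as a direct sum of seminorms with positive definiteness inherited from $||\cdot||_{\infty}$ is exactly right.
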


\subsection{The transfer operator associated to $F$}

In this section, we examine the transfer operator associated with skew-product maps, $F=(f, G)$, as defined in Section \ref{intro}. We analyze its action on our disintegrated measure spaces, $\mathcal{L}^\infty$ and $S^\infty$, which were introduced in Section \ref{jdfjdhkjf}. For the transfer operator applied to measures, a type of Perron-Frobenius formula holds (see Corollary \ref{oierew}). This formula bears some resemblance to the one that applies to one-dimensional maps.

Consider the pushforward map (also known as the "transfer operator") $\func{F}_{\ast }$ associated with $F$, defined
by 
\begin{equation*}
\lbrack \func{F}_{\ast }\mu ](E)=\mu (F^{-1}(E)),
\end{equation*}%
for each signed measure $\mu \in \mathcal{SB}(\Sigma )$ and for all
measurable set $E\subset \Sigma $, where $\Sigma:=M\times K$.

The reader can find the proofs of the following three results in Lemma 4.1, Proposition 2, and Corollary 2 of \cite{RRR}, respectively.

\begin{lemma}
	\label{transformula}For every probability $\mu \in \mathcal{AB}$ disintegrated
	by $(\{\mu _{\gamma }\}_{\gamma },\phi _1)$, the disintegration $(\{(\func{%
		F}_{\ast }\mu )_{\gamma }\}_{\gamma },(\func{F}_{\ast }\mu )_{x})$ of the
	pushforward $\func{F}_{\ast }\mu $ \ satisfies the following relations%
	\begin{equation}
	(\func{F}_{\ast }\mu )_{x}=\func{P}_{f}(\phi _1)m_{1}  \label{1}
	\end{equation}
	and
	\begin{equation}
	(\func{F}_{\ast }\mu )_{\gamma }=\nu _{\gamma }:=\frac{1}{\func{P}_{f}(\phi
		_1)(\gamma )}\sum_{i=1}^{\deg(f)}{\frac{\phi _1}{|\det Df_{i}|}\circ
		f_{i}^{-1}(\gamma )\cdot \chi _{f_{i}(P_{i})}(\gamma )\cdot \func{F}_{\ast
		}\mu _{f_{i}^{-1}(\gamma )}}  \label{2}
	\end{equation}
	when $\func{P}_{f}(\phi _x)(\gamma )\neq 0$. Otherwise, if $\func{P}%
	_{f}(\phi _1)(\gamma )=0$, then $\nu _{\gamma }$ is the Lebesgue\footnote{There is nothing special about the Lebesgue measure here. We could replace it with any other positive measure.} measure
	on $\gamma $ (the expression $\displaystyle{\frac{\phi _1}{|\det Df_{i}|}%
		\circ f_{i}^{-1}(\gamma )\cdot \frac{\chi _{f_{i}(P_{i})}(\gamma )}{\func{P}%
			_{f}(\phi _1)(\gamma )}\cdot \func{F}_{\ast }\mu _{f_{i}^{-1}(\gamma )}}$
	is understood to be zero outside $f_{i}(P_{i})$ for all $i=1,\cdots ,\deg(f)$).
	Here and above, $\chi _{A}$ is the characteristic function of the set $A$.
\end{lemma}

\begin{proposition}
	\label{niceformulaab}Let $\gamma \in \mathcal{F}^{s}$ be a stable leaf. Let
	us define the map $F_{\gamma }:K\longrightarrow K$ by 
	\begin{equation}\label{ritiruwt}
	F_{\gamma }=\pi _{2}\circ F|_{\gamma }\circ \pi _{\gamma ,2}^{-1}.
	\end{equation}%
	Then, for each $\mu \in \mathcal{L}^{\infty}$ and for almost all $\gamma \in
	M$ it holds 
	\begin{equation}
	(\func{F}_{\ast }\mu )|_{\gamma }=\sum_{i=1}^{\deg(f)}{\func{F}%
		_{\gamma _i \ast }\mu |_{\gamma _i }\rho _i(\gamma _i)\chi _{f_{i}(P_{i})}(\gamma )}\ \ m_{1}%
	\mathnormal{-a.e.}\ \ \gamma \in M  \label{niceformulaa}
	\end{equation}%
	where $\func{F}_{\gamma_i \ast }$ is the pushforward map
	associated to $\func{F}_{\gamma_i}$, $\gamma _i = f_{i}^{-1}(\gamma )$ when $\gamma \in f_i (P_i)$ and $\rho_i(\gamma)= \dfrac{1}{|\det (f_i^{'}(\gamma))|}$, where $f_i = f|_{P_i}$.
\end{proposition}

\begin{remark}\label{chkjg} 
	By (f2),  (see \cite{RRR}) there exists a disjoint finite family, $\mathcal{P}$, of open sets, $P_1, \cdots, P_{\deg{(f)}}$, s.t. $\bigcup_{i=1}^{\deg{(f)}} P_i=M$ $m_1$-a.e., and $f|_{P_{i}}:P_i \longrightarrow f(P_i)$ is a diffeomorfism for all $i=1, \cdots \deg{(f)}$. Moreover, $f(P_i)=M$ $m_1$-a.e., for all $i=1, \cdots, \deg(f)$. Therefore, it holds that $$\func{P}_f(\varphi)(x) = \sum _{i=1}^{\deg{(f)}} {\varphi (x_i)\rho (x_i)},$$ for $m_1$-a.e. $x \in M$, where $$\rho_i(\gamma):= \frac{1}{|\det (f_i^{'}(\gamma))|}$$ and $f_i = f|_{P_i}$. This expression will be used later on.
\end{remark}

Sometimes it will be convenient to use the following expression for $(\func{F}_{\ast } \mu )|_{\gamma }$, which is a consequence of Remark \ref{chkjg} and Proposition \ref{niceformulaab}.
\begin{corollary}\label{oierew}
	For each $\mu \in \mathcal{L}^{\infty}$ it holds 
	\begin{equation}
	(\func{F}_{\ast }\mu )|_{\gamma }=\sum_{i=1}^{\deg(f)}{\func{F}%
		_{\gamma _i \ast }\mu |_{\gamma _i }\rho _i(\gamma _i)}\ \ m_{1}%
	\mathnormal{-a.e.}\ \ \gamma \in M,  \label{niceformulaaareer}
	\end{equation}%
	where $\gamma _i$ is the $i$-th pre image of $\gamma$, $i=1,\cdots, \deg(f)$.
\end{corollary}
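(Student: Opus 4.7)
The plan is to deduce Corollary \ref{oierew} directly from Proposition \ref{niceformulaab} by removing the cut-off characteristic functions $\chi_{f_i(P_i)}$, which is legitimate once we observe that those functions equal $1$ almost everywhere with respect to $m_1$.

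First I would invoke Remark \ref{chkjg}, which asserts that the finite family $\mathcal{P} = \{P_1,\dots,P_{\deg(f)}\}$ satisfies $f(P_i) = M$ $m_1$-a.e.\ for each $i$. Concretely, for every $i \in \{1,\dots,\deg(f)\}$ there is a measurable set $N_i \subset M$ with $m_1(N_i) = 0$ such that $f_i(P_i) \supseteq M \setminus N_i$. Setting $N := \bigcup_{i=1}^{\deg(f)} N_i$, the set $N$ is a finite union of $m_1$-null sets and so $m_1(N) = 0$. Therefore, for every $\gamma \in M \setminus N$ and every $i$, we have $\chi_{f_i(P_i)}(\gamma) = 1$.

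Next I would apply Proposition \ref{niceformulaab} to the measure $\mu \in \mathcal{L}^\infty$: there exists an $m_1$-null set $N' \subset M$ such that, for all $\gamma \in M \setminus N'$,
\begin{equation*}
(\func{F}_{\ast}\mu)|_{\gamma} = \sum_{i=1}^{\deg(f)} \func{F}_{\gamma_i \ast}\mu|_{\gamma_i}\, \rho_i(\gamma_i)\, \chi_{f_i(P_i)}(\gamma),
\end{equation*}
with $\gamma_i = f_i^{-1}(\gamma)$ whenever $\gamma \in f_i(P_i)$. Restricting to the full-measure set $M \setminus (N \cup N')$, each characteristic factor equals $1$ and every pre-image $\gamma_i$ is well defined, which yields
\begin{equation*}
(\func{F}_{\ast}\mu)|_{\gamma} = \sum_{i=1}^{\deg(f)} \func{F}_{\gamma_i \ast}\mu|_{\gamma_i}\, \rho_i(\gamma_i)
\end{equation*}
for $m_1$-a.e.\ $\gamma \in M$, which is precisely the claimed identity.

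I do not foresee any serious obstacle: the only subtlety is making sure that the exceptional set where some $\chi_{f_i(P_i)}$ fails to equal $1$ is genuinely $m_1$-null, which is immediate because a finite union of null sets is null. The formula of Proposition \ref{niceformulaab} already does all of the actual work of tracking the disintegration under $\func{F}_\ast$; the corollary is just a cosmetic simplification allowed by Remark \ref{chkjg}.
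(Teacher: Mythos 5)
Your proof is correct and takes essentially the same route the paper intends: the paper introduces the corollary immediately after pointing at Remark \ref{chkjg}, which is exactly the fact you invoke to drop the $\chi_{f_i(P_i)}$ factors on a full-measure set, and your bookkeeping with the null sets $N_i$, $N$, and $N'$ is the standard and correct way to make that rigorous.
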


\subsection{Basic properties of the norms and convergence to equilibrium}

In this part, we list the properties of the norms and their behavior concerning the action of the transfer operator.

According to \cite{VAC} and \cite{VM}, a map $f: M \longrightarrow M$ satisfying (f1), (f2), and (f3) has an invariant probability measure $m_1$ of maximal entropy. The Perron-Frobenius operator of $f$, denoted as $\func{P}_f: L^1_{m_1} \longrightarrow L^1_{m_1}$, satisfies the following two results, the proofs of which can be found in \cite{RRR}.

\begin{theorem}\label{loiub}
	There exist $0< r<1$ and $D>0$ such that for all $\varphi \in H_\zeta$,  and $\int{\varphi}dm_1 =0$, it holds $$|\func{P_f}^n(\varphi)|_\zeta \leq Dr^n|\varphi|_\zeta \ \ \forall \ n \geq 1,$$where $|\varphi|_\zeta := H_\zeta (\varphi) + |\varphi|_{\infty}$.  
\end{theorem}

\begin{theorem}\label{asewqtw} (Lasota-Yorke inequality) There exist $k\in \mathbb{N}$, $%
	0<\beta _{0}<1$ and $C>0$ such that, for all $g\in H_\zeta$, it holds 
	\begin{equation}
	|\func{P}_{f}^{k}g|_{\zeta}\leq \beta _{0}|g|_{\zeta}+C|g|_{\infty},  \label{LY1}
	\end{equation}where $|g|_\zeta := H_\zeta (g) + |g|_{\infty}$.
	
\end{theorem}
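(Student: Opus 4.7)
The plan is to prove the Lasota--Yorke inequality by controlling $|\func{P}_f^k g|_\infty$ and $H_\zeta(\func{P}_f^k g)$ separately, with the former being essentially free and the latter being the technical core that must use all three hypotheses (f1), (f2), (f3) in conjunction with the constant inequality \eqref{kdljfhkdjfkasd}.

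First, the $L^\infty$ bound is easy: since $m_1$ is $f$-invariant, $\func{P}_f 1 = 1$; combining this with the positivity of $\func{P}_f$ gives $|\func{P}_f^k g|_\infty \leq |g|_\infty$ for every $k\geq 1$. So only the Hölder seminorm requires work. For $x,y\in M$ lying in the same atom of the $k$-th step refinement $\mathcal{P}^{(k)} = \bigvee_{j=0}^{k-1} f^{-j}\mathcal{P}$ (with $\mathcal{P}$ as in Remark~\ref{chkjg}), the inverse branches of $f^k$ pair points $x_i\leftrightarrow y_i$ and
\begin{equation*}
\func{P}_f^k g(x)-\func{P}_f^k g(y) = \sum_i \rho^{(k)}(x_i)\bigl[g(x_i)-g(y_i)\bigr] + \sum_i g(y_i)\bigl[\rho^{(k)}(x_i)-\rho^{(k)}(y_i)\bigr],
\end{equation*}
where $\rho^{(k)}(z)=\prod_{j=0}^{k-1}\rho(f^j z)$. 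The strategy is to bound the first sum by $\beta^k H_\zeta(g)\,d_1(x,y)^\zeta$ and the second sum by $C|g|_\infty\, d_1(x,y)^\zeta$.

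For the first sum, I would use $|g(x_i)-g(y_i)|\leq H_\zeta(g)\,d_1(x_i,y_i)^\zeta$ together with (f1) to estimate $d_1(x_i,y_i)$ by a product of local inverse-Lipschitz constants: $L(x)\leq L$ on $\mathcal{A}$ and $L(x)\leq\sigma^{-1}$ on $\mathcal{A}^c$. Hypothesis (f2) then provides the combinatorial control: at each step only $q$ inverse branches can fall in $\mathcal{A}$, so if an inverse orbit visits $\mathcal{A}$ exactly $j$ times along $k$ iterations, its total contraction is at most $L^{j\zeta}\sigma^{-(k-j)\zeta}$ and there are at most $\binom{k}{j}q^j(\deg(f)-q)^{k-j}$ such branches. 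Weighing by $\rho^{(k)}(x_i)$ and using (f3) to replace $\sup\rho$ by $\exp(\epsilon_\rho)/\deg(f)$ (via $\sum_i\rho(x_i)=1$ and the small-oscillation condition~\eqref{f31}), the binomial expansion factors and yields the per-step contraction
\begin{equation*}
\sum_i\rho^{(k)}(x_i)\,d_1(x_i,y_i)^\zeta \le \Bigl[\exp(\epsilon_\rho)\tfrac{(\deg(f)-q)\sigma^{-\zeta}+qL^\zeta[1+(L-1)^\zeta]}{\deg(f)}\Bigr]^k d_1(x,y)^\zeta,
\end{equation*}
which is $\beta^k\,d_1(x,y)^\zeta$ with $\beta<1$ thanks to \eqref{kdljfhkdjfkasd}. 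The factor $[1+(L-1)^\zeta]$ is precisely what appears when one converts the iterated distortion of inverse branches on $\mathcal{A}$ into a Hölder bound (using $(1+t)^\zeta\leq 1+t^\zeta$ and telescoping).

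For the second sum, I would show that $\rho^{(k)}$ inherits uniform Hölder regularity independent of $k$. Writing $\log\rho^{(k)}(x_i)-\log\rho^{(k)}(y_i) = \sum_{j=0}^{k-1}[\log\rho(f^j x_i)-\log\rho(f^j y_i)]$, using $H_\zeta(\log\rho)<\epsilon_\rho$ (consequence of (f3)), and noting that forward iterates of paired points expand backward contractions — i.e.\ $d_1(f^j x_i, f^j y_i)$ is geometrically increasing toward $d_1(x,y)$ — the tail sum is dominated by a convergent geometric series in the per-step contraction factor. This yields $|\rho^{(k)}(x_i)-\rho^{(k)}(y_i)|\leq C'\rho^{(k)}(x_i)d_1(x_i,y_i)^\zeta$ uniformly in $k$, and summing over $i$ plus $|g(y_i)|\leq|g|_\infty$ gives the second sum $\leq C|g|_\infty\, d_1(x,y)^\zeta$. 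Combining the two estimates and choosing $k$ large so that $\beta_0:=\beta^k<1$ produces \eqref{LY1}. The main obstacle is the combinatorial/distortion bookkeeping in the first sum: one must simultaneously handle the counting of inverse branches via (f2), the cone control of densities via (f3), and the contraction tuning via (f1), and see that the precise algebraic shape of \eqref{kdljfhkdjfkasd} emerges naturally from the weighted sum over branch types.
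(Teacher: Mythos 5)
The paper does not give its own proof of Theorem~\ref{asewqtw}; it states that the proof can be found in \cite{RRR}, so there is no in-paper argument to compare yours against line-by-line. What I can do is evaluate your outline on its own terms against the hypotheses (f1)--(f3) and the constant inequality \eqref{kdljfhkdjfkasd} that it must exploit.

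Your overall architecture is sound and standard: $|\func{P}_f^k g|_\infty\le|g|_\infty$ from $\func{P}_f 1=1$ and positivity; then a two-sum split of $\func{P}_f^k g(x)-\func{P}_f^k g(y)$, bounding the $H_\zeta(g)$-part by a per-step contraction and the $\rho^{(k)}$-oscillation part by $C|g|_\infty$ via bounded distortion. The bounded-distortion argument for the second sum (small $H_\zeta(\log\rho)$ from \eqref{f32} summed along backward-contracting orbits) is the right lemma and should go through.

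The real gap is exactly where you flag it, but it is more serious than you indicate. Your stated per-step bookkeeping -- at most $\binom{k}{j}q^j(\deg(f)-q)^{k-j}$ branches contracting by $L^{j\zeta}\sigma^{-(k-j)\zeta}$, weighted by $\rho^{(k)}\le(e^{\epsilon_\rho}/\deg f)^k$ -- produces, via the binomial theorem, the factor
\begin{equation*}
\Bigl[e^{\epsilon_\rho}\tfrac{(\deg f - q)\sigma^{-\zeta}+qL^{\zeta}}{\deg f}\Bigr]^k,
\end{equation*}
which has $qL^\zeta$ where the paper's condition \eqref{kdljfhkdjfkasd} has $qL^\zeta\bigl[1+(L-1)^\zeta\bigr]$. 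You assert the missing factor $[1+(L-1)^\zeta]$ ``is precisely what appears when one converts the iterated distortion of inverse branches on $\mathcal{A}$ into a H\"older bound,'' but as written your computation simply does not produce it: the binomial expansion closes with $qL^\zeta$ and leaves no room for an extra multiplicative $[1+(L-1)^\zeta]$. This factor almost certainly comes from a genuinely different decomposition -- plausibly from the need to handle, at each step through $\mathcal A$, both the H\"older increment $d_1(x_i,y_i)^\zeta$ \emph{and} the growth of the increment $d_1(x_i,y_i)^\zeta-d_1(x,y)^\zeta\le((L-1)d_1(x,y))^\zeta$, or from working in a projective H\"older cone \`a la Castro--Varandas \cite{VAC} where the cone-invariance estimate naturally pairs $L^\zeta$ with $(L-1)^\zeta$. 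Either way this is the technical heart of the result and cannot be left as ``precisely what appears''; until it is derived, the per-step constant you claim is not justified and the invocation of \eqref{kdljfhkdjfkasd} to conclude $\beta<1$ does not close.

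One minor point in your favor: \eqref{kdljfhkdjfkasd} as printed uses the exponent $\alpha$ (the fiber contraction rate from (G1)), which has nothing to do with the base map; your use of $\zeta$ (the H\"older exponent) is surely what is intended, so you have implicitly corrected a typo.
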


\begin{corollary}\label{irytrtrte}
	There exist constants $B_3>0$, $C_2>0$ and $0<\lambda<1$ such that for all $%
	g \in H_\zeta$, and all $n \geq 1$, it holds
	
	\begin{equation}
	|\func{P}_{f}^{n}g|_{\zeta} \leq B_3 \lambda ^n | g|_{\zeta} + C_2|g|_{\infty}.
	\label{lasotaiiii}
	\end{equation}
\end{corollary}

In the following, item (1) demonstrates the continuity and weak contraction of the transfer operator, $\func {F}_*$, with respect to the norm $||\cdot||_\infty$. Items (2) and (3) provide Lasota-Yorke inequalities for the norms $||\cdot||_\infty$ and $||\cdot||_{S^\infty}$, showing a regularizing property of the transfer operator for these norms. These inequalities are also commonly referred to as Doeblin-Fortet inequalities. The proofs of equations (\ref{weakcontral11234}), (\ref{lasotaoscilation2}), (\ref{nicecoro}), and (\ref{nicecoroo}) can be found in Proposition 3, Proposition 4, Corollary 3, and Lemma 5.2 of \cite{RRR}, respectively.

\begin{enumerate}
	\item (Weak Contraction for $||\cdot||_\infty$) If $\mu \in \mathcal{L}^{\infty}$, then 
	\begin{equation}\label{weakcontral11234}
	||\func{F}_{\ast }\mu ||_{\infty}\leq ||\mu ||_{\infty};
	\end{equation}%

	\item (Lasota-Yorke inequality for $S^{\infty}$) 
	There exist $A$, $B_{2}>0$ and $\lambda <1$ ($\lambda$ of Corollary \ref{irytrtrte}) such that, for all $\mu
	\in S^{\infty}$, it holds
	\begin{equation}
	||\func{F}_{\ast }^{n}\mu ||_{S^{\infty}}\leq A\lambda ^{n}||\mu
	||_{S^{\infty}}+B_{2}||\mu ||_{\infty},\ \ \forall n\geq 1;  \label{lasotaoscilation2}
	\end{equation}%
	
	\item 	For every signed measure $\mu \in \mathcal{L}^{\infty}$, it holds 
	\begin{equation}\label{nicecoro}
	||\func{F}_{\ast }^{n}\mu ||_{\infty}\leq (\alpha ^\zeta) ^{n}||\mu ||_{\infty}+\overline{%
		\alpha }|\phi _{1}|_{\infty},
	\end{equation}%
	where $\overline{\alpha }=\frac{1 }{1-\alpha^\zeta }$;
	
		\item 	For every signed measure $\mu$ on $K$, such that $\mu(K)=0$ it holds 
	\begin{equation}\label{nicecoroo}
	||\func{F}_{\gamma *} \mu ||_{W}\leq \alpha ^\zeta ||\mu ||_{W},
	\end{equation}where $F_\gamma$ is defined in equation (\ref{ritiruwt}).
	
\end{enumerate}

\subsection{Convergence to equilibrium}\label{invt}

Let $X$ be a compact metric space. Consider the space $\mathcal{SB}(X)$ of
Boreleans signed measures on $X$ and two normed vectors subspaces, $(B_{s},||~||_{s})\subseteq (B_{w},||~||_{w})\subseteq \mathcal{SB}(X)$ with
norms satisfying
\begin{equation*}
||~||_{w}\leq ||~||_{s}.
\end{equation*}%
We say that a Markov operator
\begin{equation*}
	\text{L}:B_{w}\longrightarrow B_{w}
\end{equation*}
has convergence to the equilibrium with a speed of at least $\Phi $ for
the norms $||\cdot ||_{s}$ and $||\cdot ||_{w}$, if for each $\mu \in 
\mathcal{V}_{s}$, where 
\begin{equation}
\mathcal{V}_{s}=\{\mu \in B_{s},\mu (X)=0\}  \label{vs}
\end{equation}%
is the space of zero-average measures, it holds 
\begin{equation*}
||\text{L}^{n}(\mu )||_{w}\leq \Phi (n)||\mu ||_{s},  \label{wwe}
\end{equation*}%
where $\Phi (n)\longrightarrow 0$ as $n\longrightarrow \infty $.

Let us consider the set of zero average measures in $S^{\infty}$ defined by 
\begin{equation}
\mathcal{V}_{s}=\{\mu \in S^{\infty}:\mu (\Sigma )=0\}.  \label{mathV}
\end{equation}%

The proof of the next proposition can be found in \cite[Proposition 6]{RRR}.
\begin{theorem}[Exponential convergence to equilibrium]
	\label{5.8} There exist $D_{2}\in \mathbb{R}$ and $0<\beta _{1}<1$ such that
	for every signed measure $\mu \in \mathcal{V}_{s}$, it holds 
	\begin{equation*}
	||\func{F}_{\ast }^{n}\mu ||_{\infty}\leq D_{2}\beta _{1}^{n}||\mu ||_{S^{\infty}},
	\end{equation*}%
	for all $n\geq 1$, where $\beta _{1}=\max \{\sqrt{r},%
	\sqrt{\alpha^\zeta }\}$ and $D_{2}=(\sqrt{\alpha^\zeta }^{-1}+\overline{\alpha }D \sqrt{r}^{-1})$.\label{quasiquasiquasi}
\end{theorem}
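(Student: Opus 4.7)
The strategy is to split the $n$ iterations into two halves of roughly equal size. The first half exploits the expanding direction through Theorem \ref{loiub}: since $\mu \in \mathcal{V}_{s}$ forces the marginal density $\phi_x$ to have zero $m_1$-mean, iteration of the Perron--Frobenius operator $\func{P}_f$ contracts $\phi_x$ exponentially at rate $r$. The second half exploits the contracting fiber direction through the regularization inequality (\ref{nicecoro}), which converts a small marginal density into a small value of $||\cdot||_\infty$.

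Concretely, fix $n \geq 1$ and set $m_1 = \lceil n/2 \rceil$, $m_2 = \lfloor n/2 \rfloor$, so $m_1 + m_2 = n$ and $m_1, m_2 \geq n/2 - 1/2$. Let $\nu := \func{F}_{\ast}^{m_1}\mu$. By Lemma \ref{transformula}, the marginal density of $\nu$ is $\func{P}_f^{m_1}\phi_x$, and since $\mu(\Sigma)=0$ forces $\int \phi_x\,dm_1 = 0$, Theorem \ref{loiub} yields
$$|(\nu)_x|_\infty \;\leq\; |\func{P}_f^{m_1}\phi_x|_\zeta \;\leq\; Dr^{m_1}|\phi_x|_\zeta \;\leq\; Dr^{m_1}\,||\mu||_{S^{\infty}}.$$
The weak contraction (\ref{weakcontral11234}) simultaneously gives $||\nu||_\infty \leq ||\mu||_\infty \leq ||\mu||_{S^{\infty}}$. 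Applying (\ref{nicecoro}) to $\nu$ for $m_2$ iterations,
$$||\func{F}_{\ast}^{n}\mu||_\infty \;=\; ||\func{F}_{\ast}^{m_2}\nu||_\infty \;\leq\; (\alpha^\zeta)^{m_2}\,||\nu||_\infty + \overline{\alpha}\,|(\nu)_x|_\infty \;\leq\; \bigl((\alpha^\zeta)^{m_2} + \overline{\alpha}\,Dr^{m_1}\bigr)\,||\mu||_{S^{\infty}}.$$

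Setting $\beta_1 := \max\{\sqrt{r},\sqrt{\alpha^\zeta}\}$ and using $m_1,m_2 \geq n/2 - 1/2$, we obtain $(\alpha^\zeta)^{m_2} \leq (\sqrt{\alpha^\zeta})^{-1}\beta_1^n$ and $r^{m_1} \leq (\sqrt{r})^{-1}\beta_1^n$; summing gives the stated bound with $D_2 = (\sqrt{\alpha^\zeta})^{-1} + \overline{\alpha}D(\sqrt{r})^{-1}$. The only subtle points are verifying that (\ref{nicecoro}) and Lemma \ref{transformula} extend to signed measures in $S^{\infty}$ (which follows via the Jordan decomposition together with Remark \ref{ghtyhh}, since both the $||\cdot||_\infty$ norm and the disintegration formula behave linearly in $\mu$) and that $\phi_x$ is indeed $\zeta$-H\"older so that Theorem \ref{loiub} applies, which is exactly the definition of $S^{\infty}$ in (\ref{sinfi}). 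Beyond these routine checks no serious obstacle appears; the heart of the matter is the clean separation between contraction in the base (controlled by $r$) and contraction in the fibers (controlled by $\alpha^\zeta$).
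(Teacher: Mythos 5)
Your proof is correct and follows essentially the same route as the paper: split $n$ into two halves, use Theorem \ref{loiub} on the first half to shrink the marginal density via $\func{P}_f$, and use inequality (\ref{nicecoro}) on the second half to pass that smallness to $||\cdot||_\infty$, with the weak contraction (\ref{weakcontral11234}) controlling the leading term. The only difference is cosmetic: you parametrize the split by $\lceil n/2\rceil$ and $\lfloor n/2\rfloor$ whereas the paper writes $n=2l+d$, and in fact your bookkeeping of the exponent on $r$ is slightly more precise than the paper's, which harmlessly drops a $+d$ in the exponent at one step.
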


\subsection{H\"older-Measures}

In this section, we introduce in Definition \ref{Lips3} the concept of Holder's constant of a signed measure on $\Sigma$. We also make use of the hypotheses (G2) for the first time. Moreover, apart from satisfying equation (\ref{kdljfhkdjfkasd}), the constant $L_1$ mentioned in (f1) and (f3) is also required to be sufficiently close to $1$ such that $(\alpha \cdot L_1)^\zeta<1$ (or $\alpha$ is close enough to $0$). This condition is satisfied by the examples of Section \ref{ooisidrosr}.

We have observed that a positive measure on $M \times K$ can be disintegrated along the stable leaves $\mathcal{F}^s$ in such a way that we can regard it as a family of positive measures on $M$, denoted by $\{\mu |_\gamma\}_{\gamma \in \mathcal{F}^s}$. Since there exists a one-to-one correspondence between $\mathcal{F}^s$ and $M$, this defines a path in the metric space of positive measures ($\mathcal{SB}(K)$) defined on $K$, represented by $M \longmapsto \mathcal{SB}(K)$, where $\mathcal{SB}(K)$ is equipped with the Wasserstein-Kantorovich-like metric (see Definition \ref{wasserstein}).

It will be convenient to use functional notation and denote such a path by $\Gamma_{\mu}: M \longrightarrow \mathcal{SB}(K)$, defined almost everywhere by $\Gamma_{\mu}(\gamma) = \mu|_\gamma$, where $(\{\mu_{\gamma}\}_{\gamma \in M}, \phi_{1})$ is some disintegration of $\mu$.
However, since this disintegration is defined $\widehat{\mu}$-a.e. $\gamma \in M$, the path $\Gamma_\mu$ is not unique. For this reason, we define $\Gamma_{\mu}$ as the class of almost everywhere equivalent paths corresponding to $\mu$.

\begin{definition}
	Consider a positive Borelean measure $\mu$ on $M \times K$, and a disintegration  $\omega=(\{\mu _{\gamma }\}_{\gamma \in M},\phi
	_1)$, where $\{\mu _{\gamma }\}_{\gamma \in M }$ is a family of
	probabilities on $M \times K$ defined $\widehat{\mu}$-a.e. $\gamma \in M$ (where $\widehat{\mu} := \pi_1{_*}\mu=\phi _1 m_1$) and $\phi
	_1:M\longrightarrow \mathbb{R}$ is a non-negative marginal density. Denote by $\Gamma_{\mu }$ the class of equivalent paths associated to $\mu$ 
	\begin{equation*}
	\Gamma_{\mu }=\{ \Gamma^\omega_{\mu }\}_\omega,
	\end{equation*}
	where $\omega$ ranges on all the possible disintegrations of $\mu$ and $\Gamma^\omega_{\mu }: M\longrightarrow \mathcal{SB}(K)$ is the map associated to a given disintegration, $\omega$:
	$$\Gamma^\omega_{\mu }(\gamma )=\mu |_{\gamma } = \pi _{\gamma, 2} ^\ast \phi _1
	(\gamma)\mu _\gamma .$$
\end{definition}Let us call the set on which $\Gamma_{\mu }^\omega $ is defined by $I_{\Gamma_{\mu }^\omega } \left( \subset M\right)$.

\begin{definition}For a given $0<\zeta <1$, a disintegration $\omega$ of $\mu$, and its functional representation $\Gamma_{\mu }^\omega $, we define the \textbf{$\zeta$-H\"older constant of $\mu$ associated to $\omega$} by

	\begin{equation}\label{Lips1}
	|\mu|_\zeta ^\omega := \esssup _{\gamma_1, \gamma_2 \in I_{\Gamma_{\mu }^\omega}} \left\{ \dfrac{||\mu|_{\gamma _1}- \mu|_{\gamma _2}||_W}{d_1 (\gamma _1, \gamma _2)^\zeta}\right\}.
	\end{equation}Finally, we define the \textbf{$\zeta$-H\"older constant} of the positive measure $\mu$ by

	\begin{equation}
	\label{Lips2}
	|\mu|_\zeta :=\displaystyle{\inf_{ \Gamma_{\mu }^\omega \in \Gamma_{\mu } }\{|\mu|_\zeta ^\omega\}}.
	\end{equation}
	
	\label{Lips3}
\end{definition}

\begin{remark}
	When no confusion is possible, to simplify the notation, we denote $\Gamma_{\mu }^\omega (\gamma )$ just by $\mu |_{\gamma } $.
\end{remark}

\begin{definition}\label{sdfsdjhfjhsgjdfgjsd}
	From the Definition \ref{Lips3} we define the set of the $\zeta$-H\"older`s positive measures $\mathcal{H} _\zeta^{+}$ as
	\begin{equation}
	\mathcal{H} _\zeta^{+}=\{\mu \in \mathcal{AB}:\mu \geq 0,|\mu |_\zeta <\infty \}.
	\end{equation}
\end{definition}

For the next lemma, for a given path $\Gamma _\mu$ which represents the measure $\mu$, we define for each $\gamma \in I_{\Gamma_{\mu }^\omega }\subset M$, the map

\begin{equation}
\mu _F(\gamma) := \func{F_\gamma }_*\mu|_\gamma,
\end{equation}where $F_\gamma :K \longrightarrow K$ is defined as

\begin{equation}\label{poier}
F_\gamma (y) = \pi_2 \circ F \circ {(\pi _2|_\gamma)} ^{-1}(y)
\end{equation}and $\pi_2 : M\times K \longrightarrow  K$ is the second coordinate projection $\pi_2(x,y)=y$.

The proofs of Lemma \ref{apppoas}, Proposition \ref{iuaswdas} and Corollary \ref{kjdfhkkhfdjfh} can be found in Lemma 7.4, Proposition 9 and Corollary 4 of \cite{RRR}.

\begin{lemma}\label{apppoas}
	Suppose that $F:\Sigma \longrightarrow \Sigma$ satisfies (G1) and (G2). Then, for all $\mu \in \mathcal{H} _\zeta^{+} $ which satisfy $\phi _1 = 1$ $m_1$-a.e., it holds $$||\func{F}%
	_{x  \ast }\mu |_{x  } - \func{F}%
	_{y \ast }\mu |_{y  }||_W \leq \alpha^\zeta |\mu|_\zeta  d_1(x, y)^\zeta  + |G|_\zeta d_1(x, y)^\zeta ||\mu ||_\infty,$$ for all $x,y \in P_i$ and all $i=1, \cdots, \deg(f)$.
\end{lemma}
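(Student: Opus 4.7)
The plan is to split the quantity via the triangle inequality
\begin{equation*}
\lVert \func{F}_{x\ast}\mu|_x - \func{F}_{y\ast}\mu|_y \rVert_W \;\leq\; \underbrace{\lVert \func{F}_{x\ast}\mu|_x - \func{F}_{x\ast}\mu|_y \rVert_W}_{(\mathrm{I})} \;+\; \underbrace{\lVert \func{F}_{x\ast}\mu|_y - \func{F}_{y\ast}\mu|_y \rVert_W}_{(\mathrm{II})},
\end{equation*}
and then attack the two terms with the two distinct ingredients available: the contraction of a single fibre map $F_\gamma$ (hypothesis (G1)), and the base $\zeta$-H\"older regularity of $G$ on each $P_i$ (hypothesis (G2)). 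The hypothesis $\phi_x\equiv 1$ is what makes the first term tractable: it forces each $\mu|_\gamma$ to be a probability measure on $K$, so the signed measure $\mu|_x-\mu|_y$ has total mass zero.

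For term (I), I would apply property (4) of the basic norm-properties subsection (the inequality $\lVert \func{F}_{\gamma\ast}\nu\rVert_W \leq \alpha^\zeta\lVert \nu\rVert_W$ for $\nu$ of zero total mass) to the zero-mean signed measure $\mu|_x-\mu|_y$ with $\gamma=x$, then use the very definition of $|\mu|_\zeta$ (Definition \ref{Lips3}) to bound the remaining Wasserstein distance between $\mu|_x$ and $\mu|_y$ by $|\mu|_\zeta\, d_1(x,y)^\zeta$. This yields
\begin{equation*}
(\mathrm{I}) \;\leq\; \alpha^\zeta \lVert \mu|_x - \mu|_y\rVert_W \;\leq\; \alpha^\zeta |\mu|_\zeta\, d_1(x,y)^\zeta.
\end{equation*}

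For term (II), I would unfold the dual definition of $\lVert\cdot\rVert_W$. For any test function $g:K\to\mathbb{R}$ with $|g|_\infty\leq 1$ and $H_\zeta(g)\leq 1$,
\begin{equation*}
\int_K g\, d(\func{F}_{x\ast}\mu|_y - \func{F}_{y\ast}\mu|_y) \;=\; \int_K \bigl(g\circ F_x - g\circ F_y\bigr)\, d\mu|_y.
\end{equation*}
Since $x,y$ lie in the same atom $P_i$, hypothesis (G2) gives
\begin{equation*}
d_2(F_x(z),F_y(z)) \;=\; d_2(G(x,z),G(y,z)) \;\leq\; |G|_\zeta\, d_1(x,y)^\zeta
\end{equation*}
for every $z$, and bounding the integrand by this displacement (combined with the H\"older / boundedness bounds on $g$) together with the observation that $\mu|_y$ is a probability (because $\phi_x\equiv 1$, so its total mass is controlled by $\lVert\mu\rVert_\infty$ in the sense used throughout the paper) yields $(\mathrm{II}) \leq |G|_\zeta\, d_1(x,y)^\zeta\, \lVert\mu\rVert_\infty$. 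Adding (I) and (II) gives the claimed bound.

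The step I expect to require the most care is term (II): the careful choice of which H\"older/boundedness bound on $g$ to pair with the displacement estimate so that the exponent comes out to $\zeta$ rather than $\zeta^2$, and the precise reading of $\lVert\mu\rVert_\infty$ as the correct envelope for the mass of $\mu|_y$. The first term is essentially formal once one notices that $\phi_x\equiv 1$ converts the difference of fibre restrictions into a zero-mass measure, making property (4) directly applicable.
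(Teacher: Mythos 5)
Your decomposition, and the way you pair the (G1)-contraction with term (I) and the (G2)-regularity with term (II), coincide with the paper's proof. Term (I) is handled exactly as you describe: $\phi_x\equiv 1$ turns $\mu|_x-\mu|_y$ into a zero-mass measure on $K$, so equation (\ref{nicecoroo}) and Definition~\ref{Lips3} give $\alpha^\zeta |\mu|_\zeta\, d_1(x,y)^\zeta$.

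The subtlety you flag in term (II) is genuine, and the paper's own proof sidesteps it rather than resolving it. For a test function $g$ with $H_\zeta(g)\le 1$ and $|g|_\infty\le 1$, the paper passes directly from $\left|\int g(G(x,z))\,d(\mu|_y)(z)-\int g(G(y,z))\,d(\mu|_y)(z)\right|$ to $\int d_2(G(x,z),G(y,z))\,d(\mu|_y)(z)$, which is valid only if $g$ were $1$-Lipschitz. Using the actual hypothesis $H_\zeta(g)\le 1$ one obtains $|g(G(x,z))-g(G(y,z))|\le d_2(G(x,z),G(y,z))^\zeta\le |G|_\zeta^\zeta\, d_1(x,y)^{\zeta^2}$, so the honest bound for (II) is $|G|_\zeta^\zeta\, d_1(x,y)^{\zeta^2}\,||\mu||_\infty$, i.e.\ exponent $\zeta^2$ and constant $|G|_\zeta^\zeta$ rather than $\zeta$ and $|G|_\zeta$. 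Your instinct that the exponent is the delicate point was correct; the approach is identical to the paper's, and the rough spot you anticipated is inherited from the paper itself, not introduced by your write-up.
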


\begin{corollary}\label{nslfdflsdjlf}
	Let $\{F_{\delta }\}_{\delta \in [0,1)}$ an admissible $R(\delta)$-perturbation and $\gamma_{\delta, i}$ the $i$-th pre-image of $\gamma \in M$ by $f_\delta$, $i=1, \cdots, \deg(f_\delta)$. Then, for all $\mu \in \mathcal{H} _\zeta^{+} $ which satisfy $\phi _1 = 1$ $m_1$-a.e., the following inequality holds: $$\left\vert \left\vert ({\func{F}_{0,\gamma _{0,i} }{_\ast }}- \func{F}_{0,\gamma _{\delta,i} }{_\ast })\mu |_{\gamma _{0,i}}\right\vert \right\vert _{W} \leq R(\delta)^\zeta( 2 \alpha^\zeta |\mu|_\zeta + |G|_\zeta ||\mu ||_\infty ) , \forall i=1, \cdots, \deg(f),$$where $F_{\delta,\gamma _{\delta,i}}$ is defined by equation (\ref{poier}), for all $\delta \in [0,1)$.
\end{corollary}
\begin{proof}
	To simplify the notation, we denote $F:=F_0$ and $\gamma:=\gamma_{0,i}$. Thus, we have
	
	\begin{eqnarray*}
		\left\vert \left\vert ({\func{F}_{0,\gamma _{0,i} }{_\ast }}- \func{F}_{0,\gamma _{\delta,i} }{_\ast })\mu |_{\gamma _{0,i}}\right\vert \right\vert _{W} &=& \left\vert \left\vert ({\func{F}_{\gamma }{_\ast }}- \func{F}_{\gamma _{\delta,i} }{_\ast })\mu |_{\gamma}\right\vert \right\vert _{W} \\&=& \left\vert \left\vert {\func{F}_{\gamma }{_\ast }\mu |_{\gamma}}- \func{F}_{\gamma _{\delta,i} }{_\ast }\mu |_{\gamma}\right\vert \right\vert _{W} \\&\leq& \left\vert \left\vert  {\func{F}_{\gamma }{_\ast }\mu |_{\gamma}}- \func{F}_{\gamma _{\delta,i} }{_\ast }\mu |_{\gamma_{\delta,i}}\right\vert\right\vert _{W} +  \left\vert \left\vert \func{F}_{\gamma _{\delta,i} }{_\ast }(\mu |_{\gamma_{\delta,i}} - \mu |_{\gamma} ) \right\vert\right\vert _{W}
	\end{eqnarray*}Since $\phi _1 = 1$ $m_1$-a.e., $\mu |_{\gamma_{\delta,i}} - \mu |_{\gamma}$ has zero average. Therefore, by Lemma \ref{apppoas}, equation (\ref{nicecoroo}), (U2.2) and definition (\ref{Lips3}) applied on $\mu$, we get
	\begin{eqnarray*}
		\left\vert \left\vert ({\func{F}_{0,\gamma _{0,i} }{_\ast }}- \func{F}_{0,\gamma _{\delta,i} }{_\ast })\mu |_{\gamma _{0,i}}\right\vert \right\vert _{W} &\leq& \left\vert \left\vert  {\func{F}_{\gamma }{_\ast }\mu |_{\gamma}}- \func{F}_{\gamma _{\delta,i} }{_\ast }\mu |_{\gamma_{\delta,i}}\right\vert\right\vert _{W} +  \alpha ^\zeta \left\vert \left\vert \mu |_{\gamma_{\delta,i}} - \mu |_{\gamma}  \right\vert\right\vert _{W} \\&\leq& 	\alpha^\zeta |\mu|_\zeta  d_1(\gamma_{\delta,i}, \gamma)^\zeta  + |G|_\zeta d_1(\gamma_{\delta,i}, \gamma)^\zeta ||\mu ||_\infty \\&+&  \alpha ^\zeta | \mu |_\zeta d_1(\gamma_{\delta,i}, \gamma)^\zeta
		\\&\leq& R(\delta)^\zeta( 2 \alpha^\zeta |\mu|_\zeta + |G|_\zeta ||\mu ||_\infty ).
\end{eqnarray*}

\end{proof}
\begin{lemma}\label{çhjghljk}
	Let $\{F_{\delta }\}_{\delta \in [0,1)}$ an admissible $R(\delta)$-perturbation and $\gamma_{\delta, i}$ the $i$-th pre-image of $\gamma \in M$ by $f_\delta$, $i=1, \cdots, \deg(f_\delta)$. Then, the following inequality holds: $$\left\vert \left\vert ({\func{F}_{0,\gamma _{\delta,i} }{_\ast }}- \func{F}_{\delta,\gamma _{\delta,i} }{_\ast })\mu |_{\gamma _{0,i}}\right\vert \right\vert _{W} \leq ||\mu|_{\gamma_{0,i}}|| R(\delta)^\zeta, \forall i=1, \cdots, \deg(f),$$ where $F_{\delta,\gamma _{\delta,i}}$ is defined by equation (\ref{poier}), for all $\delta \in [0,1)$.
\end{lemma}

\begin{proof}
To simplify the notation, we denote $\gamma:=\gamma_{\delta,i}$. Thus, by definition (\ref{wasserstein}) and (U2.3), we have

\begin{eqnarray*}
	\left\vert \left\vert ({\func{F}_{0,\gamma }{_\ast }}- \func{F}_{\delta,\gamma }{_\ast })\mu |_{\gamma _{0,i}}\right\vert \right\vert _{W} &=& \left\vert \left\vert ({\func{F}_{0, \gamma }{_\ast }}- \func{F}_{\delta, \gamma}{_\ast })\mu |_{\gamma _{0,i}}\right\vert \right\vert _{W} \\&=& \sup _{H_\zeta(g)\leq 1,|g|_{\infty }\leq 1} \left\vert \int {g}  d ({\func{F}_{0, \gamma }{_\ast }}\mu |_{\gamma _{0,i}} - \func{F}_{\delta, \gamma}{_\ast }\mu |_{\gamma _{0,i}}) \right\vert  \\&=& \sup _{H_\zeta(g)\leq 1,|g|_{\infty }\leq 1} \left\vert \int {g (G_0(\gamma,y))  - g (G_\delta(\gamma,y))}  d \mu |_{\gamma _{0,i}} \right\vert \\&\leq & \sup _{H_\zeta(g)\leq 1,|g|_{\infty }\leq 1}  \int { \left\vert g (G_0(\gamma,y))  - g (G_\delta(\gamma,y) ) \right\vert}  d \mu |_{\gamma _{0,i}}  \\&\leq&   \int {d_2(G_0(\gamma,y),G_\delta(\gamma,y))^\zeta}  d \mu |_{\gamma _{0,i}}  \\&\leq& R(\delta)^\zeta \left\vert \int {1}  d \mu |_{\gamma _{0,i}} \right\vert \\&\leq& R(\delta)^\zeta||\mu |_{\gamma _{0,i}}||_W.
\end{eqnarray*}

\end{proof}
For the next, proposition and henceforth, for a given path $\Gamma _\mu ^\omega \in \Gamma_{ \mu }$ (associated with the disintegration $\omega = (\{\mu _\gamma\}_\gamma, \phi _1)$, of $\mu$), unless written otherwise, we consider the particular path $\Gamma_{\func{F_*}\mu} ^\omega \in \Gamma_{\func{F_*}\mu}$ defined by  Corollary \ref{oierew} by the expression

\begin{equation}
\Gamma_{\func{F_*}\mu} ^\omega (\gamma)=\sum_{i=1}^{\deg(f)}{\func{F}%
	_{\gamma _i \ast }\Gamma _\mu ^\omega (\gamma_i)\rho _i(\gamma _i)}\ \ m_{1}%
\mathnormal{-a.e.}\ \ \gamma \in M.  \label{niceformulaaareer}
\end{equation}Recall that $\Gamma_{\mu} ^\omega (\gamma) = \mu|_\gamma:= \pi_{2*}(\phi_{1}(\gamma)\mu _\gamma)$ and in particular $\Gamma_{\func{F_*}\mu} ^\omega (\gamma) = (\func{F_*}\mu)|_\gamma = \pi_{2*}(\func{P}_f\phi_1(\gamma)\mu _\gamma)$, where $\phi_1 = \dfrac{d \pi _{1*} \mu}{dm_1}$ and $\func{P}_f$ is the Perron-Frobenius operator of $f$.
\begin{proposition}\label{iuaswdas}
	If $F:\Sigma \longrightarrow \Sigma$ satisfies (f1), (f2), (f3), (G1), (G2) and $(\alpha \cdot L_1)^\zeta<1$, then there exist $0<\beta<1$ and $D >0$, such that for all $\mu \in \mathcal{H} _\zeta^{+} $ which satisfy $\phi _1 = 1$ $m_1$-a.e. and for all $\Gamma ^\omega _\mu \in \Gamma _\mu$, it holds $$|\Gamma_{\func{F_*} } ^\omega\mu|_{\zeta}  \leq \beta |\Gamma_{\mu}^\omega|_\zeta + D_2||\mu||_\infty,$$ for $\beta:= (\alpha L_1)^\zeta$ and $D_2:=\{\epsilon _\rho L_1^\zeta + |G|_ \zeta L_1^\zeta\}$.
\end{proposition}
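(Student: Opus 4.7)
The plan is to start from the explicit formula for the pushforward disintegration given in Corollary \ref{oierew}: for the path $\Gamma_{F_*\mu}^\omega$ we have
\[
(F_*\mu)|_{\gamma} \;=\; \sum_{i=1}^{\deg(f)} \rho_i(\gamma_i)\,F_{\gamma_i*}\mu|_{\gamma_i}, \qquad \gamma_i=f_i^{-1}(\gamma),
\]
and to estimate the numerator in \eqref{Lips1} for $F_*\mu$ directly. Pick two leaves $\gamma^1,\gamma^2\in M$ (more precisely points of $I_{\Gamma_\mu^\omega}$ whose pre-images also lie in that set, which happens a.e.) and write $\gamma^k_i = f_i^{-1}(\gamma^k)$, $k=1,2$. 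For each $i$, use the identity
\[
\rho_i(\gamma^1_i)F_{\gamma^1_i*}\mu|_{\gamma^1_i}-\rho_i(\gamma^2_i)F_{\gamma^2_i*}\mu|_{\gamma^2_i}
\;=\;\rho_i(\gamma^1_i)\Bigl[F_{\gamma^1_i*}\mu|_{\gamma^1_i}-F_{\gamma^2_i*}\mu|_{\gamma^2_i}\Bigr]
\;+\;\bigl[\rho_i(\gamma^1_i)-\rho_i(\gamma^2_i)\bigr]F_{\gamma^2_i*}\mu|_{\gamma^2_i},
\]
take $\|\cdot\|_W$ and the triangle inequality.

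For the first bracket, since $\gamma^1_i,\gamma^2_i\in P_i$, Lemma \ref{apppoas} gives
\[
\bigl\|F_{\gamma^1_i*}\mu|_{\gamma^1_i}-F_{\gamma^2_i*}\mu|_{\gamma^2_i}\bigr\|_W
\le \bigl(\alpha^\zeta|\Gamma_\mu^\omega|_\zeta+|G|_\zeta\|\mu\|_\infty\bigr)\,d_1(\gamma^1_i,\gamma^2_i)^\zeta.
\]
Hypothesis (f1) controls the local inverse: for every $i$, $d_1(\gamma^1_i,\gamma^2_i)\le L\,d_1(\gamma^1,\gamma^2)$ (the $\mathcal A^c$ branches even give a factor $\sigma^{-1}\le L$), so the displacement exponent $\zeta$ produces the universal factor $L^\zeta$. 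For the second bracket, condition (f3), specifically $H_\zeta(\rho)<\epsilon_\rho\inf\rho$, yields
\[
|\rho_i(\gamma^1_i)-\rho_i(\gamma^2_i)|\le\epsilon_\rho\,\rho_i(\gamma^1_i)\,d_1(\gamma^1_i,\gamma^2_i)^\zeta\le \epsilon_\rho L^\zeta\rho_i(\gamma^1_i)\,d_1(\gamma^1,\gamma^2)^\zeta,
\]
and the factor $\|F_{\gamma^2_i*}\mu|_{\gamma^2_i}\|_W$ is bounded by $\|\mu|_{\gamma^2_i}\|_W$ (the push-forward by the $\alpha$-contraction $F_{\gamma^2_i}$ cannot increase $\|\cdot\|_W$), which in turn is $\le\|\mu\|_\infty$ a.e.\ since $\mu\ge 0$.

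Combining the two bracket estimates, each summand is bounded by
\[
\rho_i(\gamma^1_i)\,L^\zeta\bigl(\alpha^\zeta|\Gamma_\mu^\omega|_\zeta+|G|_\zeta\|\mu\|_\infty+\epsilon_\rho\|\mu\|_\infty\bigr)\,d_1(\gamma^1,\gamma^2)^\zeta.
\]
Summing over $i$ and using $\sum_i\rho_i(\gamma^1_i)=\func{P}_f(1)(\gamma^1)=1$ (the density $1$ is $\func{P}_f$-invariant because $m_1$ is $f$-invariant) gives
\[
\bigl\|(F_*\mu)|_{\gamma^1}-(F_*\mu)|_{\gamma^2}\bigr\|_W\le\Bigl((\alpha L)^\zeta|\Gamma_\mu^\omega|_\zeta+L^\zeta(\epsilon_\rho+|G|_\zeta)\|\mu\|_\infty\Bigr)d_1(\gamma^1,\gamma^2)^\zeta.
\]
Dividing by $d_1(\gamma^1,\gamma^2)^\zeta$ and taking the essential supremum in $\gamma^1,\gamma^2$ finishes the proof with $\beta=(\alpha L)^\zeta$ and $D_2=L^\zeta(\epsilon_\rho+|G|_\zeta)$.

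The main obstacle is essentially bookkeeping: one must apply \emph{the} specific disintegration used to define $\Gamma^\omega_{F_*\mu}$ in Corollary \ref{oierew}, be careful that Lemma \ref{apppoas} applies only when the two base points lie in the same injectivity domain $P_i$ (which is precisely why we take pre-images branch by branch), and verify that the essential suprema combine correctly. Everything else is an algebraic manipulation once the $(\alpha L)^\zeta<1$ assumption is used to ensure $\beta<1$.
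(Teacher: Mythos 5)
Your argument is correct, and it uses exactly the machinery (Corollary \ref{oierew}, Lemma \ref{apppoas}, the bound $\sum_i\rho_i(\gamma^1_i)=\func{P}_f(1)(\gamma^1)=1$, and (f1), (f3)) that the paper assembles immediately before stating this proposition — the paper itself defers the proof to \cite{RRR}, but the add-and-subtract decomposition, the resulting constants $\beta=(\alpha L)^\zeta$ and $D_2=L^\zeta(\epsilon_\rho+|G|_\zeta)$, and the final essential supremum over pairs all come out as intended. One small imprecision in your phrasing: (f1) bounds the local Lipschitz constant $L(x)$ of the inverse at each \emph{point}, not per branch (a single $P_i$ can meet both $\mathcal A$ and $\mathcal A^c$); the estimate $d_1(\gamma^1_i,\gamma^2_i)\le L\,d_1(\gamma^1,\gamma^2)$ then follows because $L(x)\le L$ everywhere and $M$ is a connected Riemannian manifold, so the local Lipschitz bound integrates along geodesics to a global one.
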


\begin{corollary}\label{kjdfhkkhfdjfh}
	Suppose that $F:\Sigma \longrightarrow \Sigma$ satisfies (f1), (f2), (f3), (G1), (G2) and $(\alpha \cdot L_1)^\zeta<1$. Then, for all $\mu \in \mathcal{H} _\zeta^{+} $ which satisfy $\phi _1 = 1$ $m_1$-a.e. and $||\func{F_*}\mu||_\infty \leq ||\mu||_\infty$, it holds 
	\begin{equation}\label{erkjwr}
	|\Gamma_{\func{F_*}^n\mu}^\omega|_{\zeta}  \leq \beta^n |\Gamma _\mu^\omega|_\zeta + \dfrac{D_2}{1-\beta}||\mu||_\infty,
	\end{equation}
	for all $n\geq 1$, where $\beta$ and $D_2$ are from Proposition \ref{iuaswdas}.
\end{corollary}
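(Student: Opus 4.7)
The plan is a standard iteration of the one-step Lasota--Yorke estimate from Proposition \ref{iuaswdas}. Write $a_k := |\Gamma^\omega_{\func{F_*}^k \mu}|_\zeta$ and $b_k := \|\func{F_*}^k \mu\|_\infty$. Proposition \ref{iuaswdas} gives, whenever its hypotheses are satisfied by $\func{F_*}^k\mu$,
\[
a_{k+1}\;\leq\;\beta\,a_k + D_2\,b_k.
\]
So the whole argument reduces to verifying that the hypotheses of Proposition \ref{iuaswdas} are preserved under $\func{F_*}$, and then solving a linear recursion.

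First I would check preservation of the marginal density. By Lemma \ref{transformula}, $(\func{F_*}\mu)_x = \func{P}_f(\phi_x)\,m_1$. Since $m_1$ is $f$-invariant (it is the equilibrium state of $f$), $\func{P}_f 1 = 1$ in $L^1_{m_1}$; hence if $\phi_x \equiv 1$ $m_1$-a.e.\ for $\mu$, the same is true for $\func{F_*}\mu$. By induction, every iterate $\func{F_*}^k \mu$ has marginal density identically $1$. Next, the weak-contraction bound (\ref{weakcontral11234}) gives $b_{k+1}\leq b_k$ for every $k\geq 0$, and the standing hypothesis $\|\func{F_*}\mu\|_\infty\leq \|\mu\|_\infty$ matches (and is in fact implied by) this. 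So $b_k\leq \|\mu\|_\infty$ for all $k\geq 0$.

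With both hypotheses verified at every step, Proposition \ref{iuaswdas} applies to $\func{F_*}^k\mu$ for each $k\geq 0$, yielding the recursion
\[
a_{k+1}\;\leq\;\beta\,a_k + D_2\,\|\mu\|_\infty.
\]
Iterating this from $k=0$ to $k=n-1$ and summing the geometric series gives
\[
a_n\;\leq\;\beta^n a_0 \;+\; D_2\,\|\mu\|_\infty\sum_{j=0}^{n-1}\beta^j\;\leq\;\beta^n |\Gamma^\omega_\mu|_\zeta \;+\; \frac{D_2}{1-\beta}\,\|\mu\|_\infty,
\]
which is exactly (\ref{erkjwr}).

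There is no real obstacle here; the only subtle point is making sure the one-step inequality is truly applicable at each iterate, i.e.\ that the canonical choice of disintegration (the one defined by the Perron--Frobenius-type formula in Corollary \ref{oierew}, which is how $\Gamma^\omega_{\func{F_*}\mu}$ is defined) still represents a measure with constant marginal density $1$, so that Proposition \ref{iuaswdas} can be invoked for $\func{F_*}^k\mu$ with the same $\omega$-indexed path. This is ensured by the $f$-invariance of $m_1$ as noted above, and the rest is the geometric-series bookkeeping.
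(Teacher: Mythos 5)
Your proof is correct and is the natural route the paper leaves implicit: iterate the one-step Lasota--Yorke estimate of Proposition \ref{iuaswdas}, after observing that the hypotheses propagate along the orbit of $\mu$ under $\func{F}_*$. You correctly identify the two things that must be preserved --- the marginal density remains $\equiv 1$ because $\func{P}_f 1 = 1$ (by $f$-invariance of $m_1$, via Lemma \ref{transformula}), and $\|\func{F}_*^k\mu\|_\infty$ stays bounded by $\|\mu\|_\infty$ by (\ref{weakcontral11234}) --- and the geometric-series bookkeeping is right. The one small step worth stating explicitly is that each iterate $\func{F}_*^k\mu$ remains in $\mathcal{H}_\zeta^+$: positivity is preserved by the transfer operator, and the one-step bound itself shows $|\Gamma^\omega_{\func{F}_*^{k+1}\mu}|_\zeta < \infty$, so $|\func{F}_*^{k+1}\mu|_\zeta < \infty$ by taking the infimum over paths; this closes the induction so that Proposition \ref{iuaswdas} can legitimately be invoked at the next step with the canonical path from Corollary \ref{oierew}.
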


\begin{remark}\label{kjedhkfjhksjdf}
	Taking the infimum over all paths $\Gamma_{ \mu } ^\omega  \in \Gamma_{ \mu }$ and all $\Gamma_{\func{F_*}^n\mu}^\omega  \in \Gamma_{\func{F_*}^n\mu}$ on both sides of inequality (\ref{erkjwr}), we get 
	
	\begin{equation}\label{fljghlfjdgkdg}
	|\func{F_*}^n\mu|_{\zeta}  \leq \beta^n |\mu|_\zeta + \dfrac{D_2}{1-\beta}||\mu||_\infty. 
	\end{equation}The above Equation (\ref{fljghlfjdgkdg}) will provide a uniform bound (see the proof of Theorem \ref{riirorpdf}) for the H\"older's constant of the measure $\func {F_*}^{n} m$, for all $n$ where $m$ is defined, as the product $m=m_1 \times \nu$, for a fixed probability measure $\nu$ on $K$. The uniform bound will be useful later on.
	
\end{remark}

\begin{remark}\label{riirorpdf}
	Consider the probability measure $m$ defined in Remark \ref{kjedhkfjhksjdf}, i.e., $m=m_1 \times \nu$, where $\nu$ is a given probability measure on $K$ and $m_1$ is the $f$-invariant measure fixed in Section \ref{intro}. Besides that, consider its trivial disintegration $\omega_0 =(\{m_{\gamma}  \}_{\gamma}, \phi_1)$, given by $m_\gamma = \func{\pi _{2,\gamma}^{-1}{_*}}\nu$, for all $\gamma$ and $\phi _1 \equiv 1$. According to this definition, it holds that 
	\begin{equation*}
	m|_\gamma = \nu, \ \ \forall \ \gamma.
	\end{equation*}In other words, the path $\Gamma ^{\omega _0}_m$ is constant: $\Gamma ^{\omega _0}_m (\gamma)= \nu$ for all $\gamma$.  Moreover, for each $n \in \mathbb{N}$, let $\omega_n$ be the particular disintegration of the measure $\func{F{_\ast }}^nm$ defined from $\omega_0$ as an application of Lemma \ref{transformula}, and consider the path $\Gamma^{\omega_{n}}_{\func{F{_\ast }}^n m}$ associated with this disintegration. By Proposition \ref{niceformulaab}, we have

	\begin{equation}
	\Gamma^{\omega_{n}}_{\func{F{_\ast }} ^n m} (\gamma)  =\sum_{i=1}^{s}{\dfrac{\func{F^n%
				_{f_{i}^{-n}(\gamma )}}_{\ast  }\nu}{|\det Df^n\circ f_{i}^{-n}(\gamma ))|}\chi _{f^n_i(P _{i})}(\gamma )}\ \ m_1-\hbox{a.e} \ \ \gamma \in M,  \label{niceformulaaw}
	\end{equation}where $P_i$, $i=1, \cdots, s=s(n)$, ranges over the partition $\mathcal{P}^{(n)}$ defined in the following way: for all $n \geq 1$, let $\mathcal{P}^{(n)}$ be the partition of $I$ s.t. $\mathcal{P}^{(n)}(x) = \mathcal{P}^{(n)}(y)$ if and only if $\mathcal{P}^{(1)}(f^j (x)) = \mathcal{P}^{(1)}(f^j(y))$ for all $j = 0, \cdots , n-1$, where $\mathcal{P}^{(1)} = \mathcal{P}$ (see remark \ref{chkjg}). This path will be used in the next section \ref{kjrthkje}.

The following result is an estimate for the regularity of the invariant measure of $F$ and its proof can be found in Theorem 7.5 of \cite{RRR}. This sort of result has many applications and can also be found in \cite{GLU} and \cite{LiLu}, wherein \cite{LiLu} the authors reach an analogous result for random dynamical systems.

\end{remark}

\begin{theorem}
	Suppose that $F:\Sigma \longrightarrow \Sigma$ satisfies (f1), (f2), (f3), (G1), (G2) and $(\alpha \cdot L_1)^\zeta<1$ and consider the unique $F$-invariant probability $\mu _{0}\in S^\infty$. Then $\mu _{0}\in \mathcal{H} _\zeta^{+}$ and 
	\begin{equation*}
	|\mu _{0}|_\zeta \leq \dfrac{D_2}{1-\beta},
	\end{equation*}where $D_2$ and $\beta$ are from Proposition \ref{iuaswdas}.
	\label{regg}
\end{theorem}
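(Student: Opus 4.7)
The plan is to bootstrap the uniform Hölder bound from Corollary \ref{kjdfhkkhfdjfh} applied to a concrete reference measure, and then transfer that bound to $\mu_0$ via the convergence to equilibrium provided by Theorem \ref{5.8}.

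First, fix any probability $\nu$ on $K$ and set $m:=m_1\times\nu\in S^{\infty}$. With the trivial disintegration $\omega_0$ described in Remark \ref{riirorpdf}, the path $\Gamma_m^{\omega_0}$ is constant equal to $\nu$, so $|m|_\zeta\le |\Gamma_m^{\omega_0}|_\zeta=0$. Moreover $\phi_x\equiv 1$ $m_1$-a.e., and $\|m\|_\infty\le 1$ since $\|\nu\|_W\le \nu(K)=1$. By the weak contraction (item~(1) after Corollary \ref{oierew}), $\|\func{F}_{\ast}^n m\|_\infty\le \|m\|_\infty\le 1$ for every $n$, so all hypotheses of Corollary \ref{kjdfhkkhfdjfh} are met. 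Combining it with the infimum-over-paths comment in Remark \ref{kjedhkfjhksjdf} gives the uniform bound
\begin{equation*}
|\func{F}_{\ast}^n m|_\zeta \;\le\; \beta^n\cdot 0+\frac{D_2}{1-\beta}\|m\|_\infty \;\le\; \frac{D_2}{1-\beta}\qquad\forall\,n\ge 1.
\end{equation*}

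Next, since $m-\mu_0\in\mathcal V_s\subset S^{\infty}$, Theorem \ref{5.8} yields $\|\func{F}_{\ast}^n m-\mu_0\|_\infty\to 0$ exponentially. To transfer the Hölder bound to $\mu_0$, I would pick for each $n$ a disintegration $\omega_n$ of $\func{F}_{\ast}^n m$ realising $|\Gamma_{\func{F}_{\ast}^n m}^{\omega_n}|_\zeta$ within $2^{-n}$ of the infimum. The resulting family of paths $\gamma\mapsto(\func{F}_{\ast}^n m)|_\gamma$ is equi-$\zeta$-Hölder on the compact base $M$ and takes values in the set of probability measures on $K$, which is compact under $\|\cdot\|_W$ (the metric $W_1^\zeta$ metrizes weak-$*$ convergence on a compact space). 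After replacing each path by its (unique) Hölder-continuous extension off the null set where it is undefined, an Arzelà–Ascoli argument extracts a uniformly convergent subsequence with limit a $\zeta$-Hölder path $\Gamma_\infty$ satisfying $|\Gamma_\infty|_\zeta\le D_2/(1-\beta)$. The convergence $\func{F}_{\ast}^n m\to\mu_0$ in $\|\cdot\|_\infty$ forces $\Gamma_\infty$ to represent a disintegration of $\mu_0$, hence $\mu_0\in\mathcal H_\zeta^{+}$ with $|\mu_0|_\zeta\le D_2/(1-\beta)$.

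As an alternative (and sanity check) for the last estimate, once finiteness $|\mu_0|_\zeta<\infty$ is established one can bypass the limiting step altogether: the $M$-marginal of $\mu_0$ is $f$-invariant and so by Theorem \ref{belongss} equals $m_1$, giving $\phi_x\equiv 1$; then Proposition \ref{iuaswdas} applied to the identity $\mu_0=\func{F}_\ast\mu_0$ (and infimising over paths on the right-hand side) yields $|\mu_0|_\zeta\le \beta|\mu_0|_\zeta+D_2\|\mu_0\|_\infty\le \beta|\mu_0|_\zeta+D_2$, from which the claimed bound follows at once. The main technical obstacle is precisely the limit step: making the Arzelà–Ascoli argument rigorous, i.e.\ passing from the a.e.-defined measurable disintegrations to honest continuous Hölder paths on $M$ and verifying that uniform convergence of paths encodes the $\|\cdot\|_\infty$-limit of the measures. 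Everything else reduces to a careful bookkeeping of the constants already produced by Corollary \ref{kjdfhkkhfdjfh} and Theorem \ref{5.8}.
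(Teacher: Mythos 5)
Your proposal hits the same three ingredients the paper uses for its own proof of the more general Theorem \ref{thshgf} in Section \ref{ieutiet}: the reference measure $m=m_1\times\nu$ with $|m|_\zeta=0$, the uniform bound $|\func{F}_*^n m|_\zeta\le D_2/(1-\beta)$ from Corollary \ref{kjdfhkkhfdjfh} and Remark \ref{kjedhkfjhksjdf}, and exponential convergence $\func{F}_*^n m\to\mu_0$ in $||\cdot||_\infty$ from Theorem \ref{5.8}. The only place you route differently is the limit step. The paper works with the explicit disintegrations $\omega_n$ of $\func{F}_*^n m$ built in Remark \ref{riirorpdf} (which already satisfy the uniform bound, so no near-optimal selection is needed), and simply observes that $||\cdot||_\infty$-convergence gives pointwise $W_1^\zeta$-convergence of the restriction paths on a full-measure set; for any pair $(x,y)$ in a full-measure set of pairs the H\"older ratio then passes to the limit, which bounds the essential supremum defining $|\mu_0|_\zeta$. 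Your Arzel\`a-Ascoli route reaches the same conclusion but with extra overhead: near-optimal disintegrations, genuine H\"older-continuous extensions off a null set (which tacitly requires $m_1$ to have full support so that the a.e.-defined domain is dense and the extension is well-defined), and an identification of the uniform limit $\Gamma_\infty$ with a disintegration of $\mu_0$. Nothing is wrong, and you do gain a genuinely continuous H\"older representative rather than only an essential-sup estimate, but it is heavier machinery than the statement requires. The fixed-point alternative $|\mu_0|_\zeta\le\beta|\mu_0|_\zeta+D_2||\mu_0||_\infty$ (with $||\mu_0||_\infty\le 1$ since $\mu_0$ is a probability with $\phi_x\equiv 1$) is a clean way to recover the quantitative bound, but as you note it presupposes finiteness of $|\mu_0|_\zeta$, which is precisely what the limit step must establish.
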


\section{Properties of \textbf{Admissible $R(\delta)$-Perturbations} }\label{kjrthkje}

In this section, we will prove some properties about \textbf{admissible $R(\delta)$-perturbations} perturbations. These properties will be used in the following sections, specifically to prove Theorem \ref{htyttigu}.

\begin{lemma}\label{UF2ass}
	Let $\{F_\delta \}_{\delta \in [0,1)}$ be an admissible $R(\delta)$-perturbation. Denote by $\func{F_\delta{_\ast}}$ their transfer operators, and by $%
	\mu_{\delta }$ their fixed points (probabilities) in $S^\infty$. Suppose that the family $\{\mu_{\delta }\}_{\delta \in [0,1)}$ satisfies 
	$$|\mu_{\delta }|_\zeta \leq B_u,$$for all $\delta \in [0,\delta_1)$.
	Then, there is a constant $C_{1}$ such that, it holds
	$$
	||(\func{F_0{_\ast }}-\func{F_\delta{_\ast }})\mu_{\delta }||_{{\infty}}\leq
	C_{1}R(\delta)^\zeta,$$for all $\delta \in [0,\delta_1)$, where $C_1:=|G_0|_\zeta + 3B_u +2$.
\end{lemma}

\begin{proof}

	Let us estimate
	
	\begin{equation}\label{12112}
		||(\func{F_0{_\ast }}-\func{F_\delta{_\ast }})\mu_{\delta }||_{\infty}= \esssup_{M}{||(\func{F_0{_\ast }}\mu_{\delta })|}_{\gamma }-(\func{F_\delta{_\ast }}\mu_{\delta })|_{\gamma}||_{W}.
	\end{equation}

	Denote by $f_{\delta,i}$, with $1\leq i\leq \deg(f)$, the branches of $f_{\delta}$ defined in the sets $P_{i} \in \mathcal{P}$(where  $\mathcal{P}$ depends on $\delta$), $f_{\delta,i}=f_{\delta }|_{P_{i}}$. Moreover,  remember that we denote $\gamma_{\delta, i}:= f^{-1}_{\delta, i} (\gamma)$ for all $\gamma \in M$, and by (U2.2) there exists $R(\delta)$ such that  
	
	\begin{equation}
		d_1(\gamma _{0,i},\gamma _{\delta,i}) \leq R(\delta) \ \ \forall i=1 \cdots \deg(f).
	\end{equation}We also recall that by (U1) $\deg (f_\delta) = \deg (f)$ for all $\delta \in [0,\delta_1)$.

	Thus, denoting $\func{F}_{\delta,\gamma_{\delta,i}}:=\func{F}_{\delta ,f_{\delta ,i}^{-1}(\gamma )}$ and $\mu := \mu_\delta$, we get

	\[
	({\func{F_0{_\ast }}}\mu -{\func{F_\delta{_\ast }}}\mu )|_{\gamma }=\sum_{i=1}^{\deg(f)}%
	\frac{{\func{F}_{0,\gamma _{0,i} }{_\ast }}\mu |_{\gamma _{0,i}}}{\det Df_{0}(\gamma _{0,i})}%
	-\sum_{i=1}^{\deg(f)}%
	\frac{{\func{F}_{\delta,\gamma _{\delta,i} }{_\ast }}\mu |_{\gamma _{\delta,i}}}{\det Df_{\delta}(\gamma _{\delta,i})},\ \ \mu _{x}-a.e.\ \gamma \in M. 
	\]%
	Then, we have
	
		\begin{equation*}
		||(\func{F_0{_\ast }}-\func{F_\delta{_\ast }})\mu||_{\infty} \leq \func{I}    +   \func{II},
	\end{equation*}where 
	
	\begin{equation}\label{I}
		\func{I} := \esssup_{M}\left\vert \left\vert \sum_{i=1}^{\deg(f)}%
		\frac{{\func{F}_{0,\gamma _{0,i} }{_\ast }}\mu |_{\gamma _{0,i}}}{\det Df_{0}(\gamma _{0,i})}%
		-\sum_{i=1}^{\deg(f)}%
		\frac{{\func{F}_{\delta,\gamma _{\delta,i} }{_\ast }}\mu |_{\gamma _{0,i}}}{\det Df_{\delta}(\gamma _{\delta,i})}\right\vert \right\vert _{W} 
	\end{equation}

	and 
	
	\begin{equation}	\label{II}
		\func{II} := \esssup_{M}\left\vert \left\vert \sum_{i=1}^{\deg(f)}%
		\frac{{\func{F}_{\delta,\gamma _{\delta,i} }{_\ast }}\mu |_{\gamma _{0,i}}}{\det Df_{\delta}(\gamma _{\delta,i})}%
		-\sum_{i=1}^{\deg(f)}%
		\frac{{\func{F}_{\delta,\gamma _{\delta,i} }{_\ast }}\mu |_{\gamma _{\delta,i}}}{\det Df_{\delta}(\gamma _{\delta,i})|}\right\vert \right\vert _{W}.
	\end{equation}
	
	Let us estimate $\func{I}$ of equation (\ref{I}). An analogous application of the triangular inequality, we have
	
	$$ \func{I} \leq \esssup_{M}\func{I}_a(\gamma) + \esssup_{M}\func{I}_b(\gamma),$$ where

	\begin{equation}
		\func{I}_a(\gamma) := \left\vert \left\vert \sum_{i=1}^{\deg(f)}%
		\frac{{\func{F}_{0,\gamma _{0,i} }{_\ast }}\mu |_{\gamma _{0,i}}}{\det Df_{0}(\gamma _{0,i})}
		-\sum_{i=1}^{\deg(f)}\frac{{\func{F}_{\delta,\gamma _{\delta,i} }{_\ast }}\mu |_{\gamma _{0,i}}}{\det Df_{0}(\gamma _{0,i})}\right\vert \right\vert _{W}
	\end{equation}and

	\begin{equation}
		\func{I}_b(\gamma) := \left\vert \left\vert \sum_{i=1}^{\deg(f)}%
		\frac{{\func{F}_{\delta,\gamma _{\delta,i} }{_\ast }}\mu |_{\gamma _{0,i}}}{\det Df_{0}(\gamma _{0,i})}
		-\sum_{i=1}^{\deg(f)}\frac{{\func{F}_{\delta,\gamma _{\delta,i} }{_\ast }}\mu |_{\gamma _{0,i}}}{\det Df_{\delta}(\gamma _{\delta,i})}\right\vert \right\vert _{W}.
	\end{equation}The summands will be treated separately. 
	
	For $\func{I}_a $, we note that

	\begin{eqnarray*}
		\func{I}_a(\gamma) &\leq &  \sum_{i=1}^{\deg(f)}%
		\left\vert \left\vert \frac{{\func{F}_{0,\gamma _{0,i} }{_\ast }}\mu |_{\gamma _{0,i}}}{\det Df_{0}(\gamma _{0,i})}
		-\sum_{i=1}^{\deg(f)}\frac{{\func{F}_{\delta,\gamma _{\delta,i} }{_\ast }}\mu |_{\gamma _{0,i}}}{\det Df_{0}(\gamma _{0,i})}\right\vert \right\vert _{W}
		\\&\leq &  \sum_{i=1}^{\deg(f)}%
		\frac{\left\vert \left\vert ({\func{F}_{0,\gamma _{0,i} }{_\ast }}- \func{F}_{\delta,\gamma _{\delta,i} }{_\ast })\mu |_{\gamma _{0,i}}\right\vert \right\vert _{W}}{\det Df_{0}(\gamma _{0,i})}
		\\&\leq &  \sum_{i=1}^{\deg(f)}%
		\frac{\left\vert \left\vert ({\func{F}_{0,\gamma _{0,i} }{_\ast }}- \func{F}_{0,\gamma _{\delta,i} }{_\ast })\mu |_{\gamma _{0,i}}\right\vert \right\vert _{W}}{\det Df_{0}(\gamma _{0,i})} +  \sum_{i=1}^{\deg(f)}%
		\frac{\left\vert \left\vert ({\func{F}_{0,\gamma _{\delta,i} }{_\ast }}- \func{F}_{\delta,\gamma _{\delta,i} }{_\ast })\mu |_{\gamma _{0,i}}\right\vert \right\vert _{W}}{\det Df_{0}(\gamma _{0,i})}.
	\end{eqnarray*}Now we note that $\mu$, satisfy $\phi_1 \equiv 1$. By Remark \ref{toyiout}, Corollary \ref{nslfdflsdjlf} and Lemma \ref{çhjghljk} applied on the last inequality above, we have
	
	\begin{eqnarray*}
		\func{I}_a(\gamma) &\leq &  \left(\sum_{i=1}^{\deg(f)}%
		\frac{1}{\det Df_{0}(\gamma _{0,i})}\right) R(\delta)^\zeta( 2 \alpha^\zeta |\mu|_\zeta + |G|_\zeta ||\mu ||_\infty ) \\&+& \left(\sum_{i=1}^{\deg(f)}%
		\frac{1}{\det Df_{0}(\gamma _{0,i})}\right)  R(\delta)^\zeta ||\mu |_{\gamma _{0,i}}||_W
		\\&\leq &    R(\delta)^\zeta (2B_u + |G_0|_\zeta +1). 
	\end{eqnarray*}
	
	For $\func{I}_b(\gamma)$, by (U2.1) we have
	\begin{eqnarray*}
		\func{I}_b(\gamma) &\leq&  \sum_{i=1}^{\deg(f)}%
		\left\vert \left\vert \frac{{\func{F}_{\delta,\gamma _{\delta,i} }{_\ast }}\mu |_{\gamma _{0,i}}}{\det Df_{0}(\gamma _{0,i})}
		-\frac{{\func{F}_{\delta,\gamma _{\delta,i} }{_\ast }}\mu |_{\gamma _{0,i}}}{\det Df_{\delta}(\gamma _{\delta,i})}\right\vert \right\vert _{W}
		\\&\leq& \sum_{i=1}^{\deg(f)}%
		\left\vert \frac{1}{\det Df_{0}(\gamma _{0,i})}
		-\frac{1}{\det Df_{\delta}(\gamma _{\delta,i})}\right\vert  \left\vert \left\vert {\func{F}_{\delta,\gamma _{\delta,i} }{_\ast }}\mu |_{\gamma _{0,i}}\right\vert \right \vert _{W}
		\\&\leq& \sum_{i=1}^{\deg(f)}%
		\left\vert \frac{1}{\det Df_{0}(\gamma _{0,i})}
		-\frac{1}{\det Df_{\delta}(\gamma _{\delta,i})}\right\vert
		\\&\leq&  R(\delta)^\zeta.
	\end{eqnarray*}Let us estimate $\func{II}$. By (Remark \ref{toyiout}), note that $\sum_{i=1}^{\deg(f)} \left\vert \frac{1}{\det Df_{\delta}(\gamma _{\delta,i})}\right\vert =1$ $m_1$-a.e.. Thus, we have
	\begin{eqnarray*}
		\func{II} &\leq&  \esssup_{M} \sum_{i=1}^{\deg(f)}%
		\left\vert \left\vert \frac{{\func{F}_{\delta,\gamma _{\delta,i} }{_\ast }}\mu |_{\gamma _{0,i}}}{\det Df_{\delta}(\gamma _{\delta,i})}
		-\frac{{\func{F}_{\delta,\gamma _{\delta,i} }{_\ast }}\mu |_{\gamma _{\delta,i}}}{\det Df_{\delta}(\gamma _{\delta,i})}\right\vert \right\vert _{W}
		\\&\leq&  \esssup_{M} \sum_{i=1}^{\deg(f)} \left\vert \frac{1}{\det Df_{\delta}(\gamma _{\delta,i})}\right\vert
		\left\vert \left\vert {\func{F}_{\delta,\gamma _{\delta,i} }{_\ast }}(\mu |_{\gamma _{0,i}}-\mu |_{\gamma _{\delta,i}})\right\vert \right\vert _{W}
		\\&\leq&  \esssup_{M} \sum_{i=1}^{\deg(f)} \left\vert \frac{1}{\det Df_{\delta}(\gamma _{\delta,i})}\right\vert
		\left\vert \left\vert \mu |_{\gamma _{0,i}}-\mu |_{\gamma _{\delta,i}}\right\vert \right\vert _{W}
		\\&\leq&  \esssup_{M} \sum_{i=1}^{\deg(f)} \left\vert \frac{1}{\det Df_{\delta}(\gamma _{\delta,i})}\right\vert
		d_1(\gamma _{\delta,i},\gamma _{0,i})^\zeta|\mu|_\zeta
		\\&\leq& \esssup_{M} \sum_{i=1}^{} \left\vert \frac{1}{\det Df_{\delta}(\gamma _{\delta,i})}\right\vert
		R(\delta) ^\zeta |\mu|_\zeta
		\\&\leq&  R(\delta) ^\zeta  B_u.
	\end{eqnarray*}Since $\zeta <1$, then $\delta \leq \delta ^\zeta$. Thus, all these facts yield
	\begin{eqnarray*}
		||(\func{F_0{_\ast }}-\func{F_\delta{_\ast }})\mu_{\delta }||_{\infty} & \leq & \func{I}  +   \func{II}
		\\& \leq & \func{I}_a    + \func{I}_b +   \func{II}
		\\& \leq & R(\delta)^\zeta (2B_u + |G_0|_\zeta +1) +  R(\delta)^\zeta + R(\delta) ^\zeta  B_u
		\\& \leq &C_1 R(\delta) ^\zeta,
	\end{eqnarray*}where $C_1:=|G_0|_\zeta + 3B_u +2.$
\end{proof}

The following result is an important tool to reach Theorem \ref{htyttigu}. It states that the function 
$$
\delta \longmapsto |\mu_{\delta }|_\zeta
$$ (see Definition \ref{Lips3}) is uniformly bounded, where $\{\mu_\delta\}_{\delta \in [0,1)}$ is the family of $F_{\delta }$-invariant probabilities of an admissible perturbation $\{F_{\delta }\}_{\delta \in [0,1)}$ of $F(=F_0)$.  

\begin{lemma}
	\label{thshgf}
	Let $\{F_{\delta }\}_{\delta \in [0,1)}$ be an admissible $R(\delta)$-perturbation and $\mu_{\delta }$ be the unique $F_\delta$-invariant probability in $S^\infty$, for all $\delta \in[0,1)$. Then, there exists $B_u>0$ such that 
	\begin{equation*}
		|\mu_{\delta }|_\zeta\leq B_u,
	\end{equation*}for all $\delta \in[0,1)$. 
\end{lemma}

First, we need a preliminary sublemma.

\begin{sublemma} If $\{F_\delta\}_{\delta \in [0,1)}$ is an admissible $R(\delta)$-perturbation. Then, there exist uniform constants $0<\beta_u<1$ and $D_{2,u}>0$ such that for every $\mu \in \mathcal{H} _\zeta^{+}$ which satisfies $\phi _1 = 1$ $m_1$-a.e., it holds
	\begin{equation}\label{er}
		|\Gamma_{\func{F_\delta {_*}}^n\mu}^\omega|_{\zeta}  \leq \beta_u ^n |\Gamma _\mu^\omega|_\zeta + \dfrac{D_{2,u}}{1-\beta_\delta}||\mu||_\infty,
	\end{equation}for all $\delta \in [0,1)$ and all $n \geq 0$.
	\label{las123rtryrdfd2}
\end{sublemma}

\begin{proof}
	We apply Corollary \ref{kjdfhkkhfdjfh} to each $F_\delta$ and obtain, 
	\begin{equation*}
		|\func{F_\delta{_\ast} \mu}|_\zeta \leq  \beta_\delta |\mu |_\zeta + D_{2,\delta} ||\mu||_{\infty}, \ \forall \delta \in [0,1),  \label{lasotaingt234dffggdgh2}
	\end{equation*}where $\beta_\delta:= (\alpha_\delta L_{1,\delta})^\zeta$ and $D_{2,\delta}:=\{\epsilon _{\rho,\delta}  L_{1,\delta}^\zeta + |G_\delta|_ \zeta L_{1,\delta}^\zeta\}$.

	By A2, we define $\beta_u:= \displaystyle{\sup _ \delta \beta_\delta}$ and $D_{2,u}:= \displaystyle{\sup_\delta D_{2,\delta}}$, and the result is established.
	
\end{proof}

\begin{proof}(of Lemma \ref{thshgf})

	Consider path $\Gamma^{\omega_n}_{\func{F_\delta{_\ast }^n}}m$, defined in Remark \ref{riirorpdf},  which represents the measure $\func{F_\delta {_\ast }}^nm$.

	According to Theorem \ref{belongss}, let $\mu_\delta \in S^\infty$ be the unique $F_\delta$-invariant probability measure in $S^\infty$. Consider the measure $m$, defined in Remark \ref{riirorpdf}, and its iterates $\func{F_{\delta \ast }}^n(m)$. By Theorem \ref{5.8}, these iterates converge to $\mu_\delta$ in $\mathcal{L}^{\infty}$. It implies that the sequence $\{\Gamma_{\func{F_{\delta \ast }}^n(m)} ^{\omega_n}\}_{n}$ converges $m_1$-a.e. to $\Gamma_{\mu_\delta}^\omega\in \Gamma_{\mu_\delta}$ (in $\mathcal{SB}(K)$ with respect to the metric defined in Definition \ref{wasserstein}), where $\Gamma_{\mu_\delta}^\omega$ is a path given by the Rokhlin Disintegration Theorem, and $\{\Gamma_{\func{F_\delta{\ast }}^n(m)} ^{\omega_n}\}_{n}$ is given by equation (\ref{niceformulaaw}). It implies that $\{\Gamma_{\func{F_\delta{\ast }}^n(m)} ^{\omega_n}\}_{n}$ converges pointwise to $\Gamma_{\mu_\delta}^\omega$ on a full measure set $\widehat{M_\delta}\subset M$.

	 Let us denote $%
	\Gamma_{n,\delta}:=\Gamma^{\omega_n}_{\func{F_\delta{_\ast }}^n(m)}|_{%
		\widehat{M_\delta}}$ and $\Gamma_\delta:=\Gamma^\omega _{\mu _{\delta}}|_{\widehat{M_\delta}}$. Since $\{\Gamma_{n,\delta} \}_n $ converges pointwise to $\Gamma _\delta$, it holds $|\Gamma_{n,\delta}|_\zeta \longrightarrow |\Gamma _\delta|_\zeta$ as $n \rightarrow \infty$. Indeed, let $x,y \in \widehat{M_\delta}$. Then,
	
	\begin{eqnarray*}
		\lim _{n \longrightarrow \infty} {\dfrac{||\Gamma_{n,\delta} (x) - \Gamma _{n, \delta}(y)||_W}{d_1(x,y)^\zeta}} &= & \dfrac{||\Gamma _\delta (x) - \Gamma _\delta (y)||_W}{d_1(x,y)^\zeta}.
	\end{eqnarray*} On the other hand, by Lemma \ref{thshgf}, the argument of the left-hand side is bounded by $|\Gamma_{n, \delta}|_\zeta \leq  \dfrac{D_u}{1-\beta_u}$  for all $n\geq 1$. Then, 
	\begin{eqnarray*}
		\dfrac{||\Gamma _\delta (x) - \Gamma _\delta (y)||_W}{d_1(x,y)^\zeta}&\leq & \dfrac{D_{u}}{1-\beta _u}.
	\end{eqnarray*} 
	Thus $|\Gamma^\omega_{\mu _{\delta}}|_\zeta \leq\dfrac{D_{u}}{1-\beta _u}$, and taking the infimum we get $|\mu _{\delta}|_\zeta \leq \dfrac{D_{u}}{1-\beta _u}$. We finish the proof defining $B_u:=\dfrac{D_{u}}{1-\beta _u}$.
\end{proof}

\section{Perturbation of Operators}\label{perturbationoperators}

The main results of this article (Theorems \ref{htyttigu} and Corollary \ref{htyttigui}) are proven by demonstrating that an \textbf{admissible} \textbf{$R(\delta)$-perturbation} induces a family of transfer operators, ${\func{T}_{\delta}}_{\delta \in [0,1)}$, referred to as the \textbf{$(R(\delta), \zeta)$-family of operators}, which is defined in the following paragraph. The main tool used to establish this is Lemma \ref{dlogd}, which is stated and proved in this section.

\begin{definition}
Suppose there are vector spaces $(B_{w}, ||\cdot||_{w} )$ and  $(B_{s}, ||\cdot||_{s} )$, satisfying $B_{s} \subset B_{w}$ and $||\cdot||_{s}\geq ||\cdot||_{w}$, where the actions $\func{T}  _{\delta }: B_{w} \longrightarrow B_{w}$, $\func{T}  _{\delta }: B_{s} \longrightarrow B_{s}$ are well defined and, for each $\delta \in [0,1)$, $\mu_{\delta }\in B_{s}$ is a fixed point for $\func{T}  _{\delta }$. Moreover, suppose that: 
\begin{enumerate}
	\item [(P1)] There are $C\in \mathbb{R}^+$ and a real-valued function $\delta \longmapsto R(\delta) \in \mathbb{R}^+$, defined on $[0,1)$,  such that $$\lim_{\delta \rightarrow 0^+} {R(\delta)\log (\delta)}=0$$ and%
	\begin{equation*}
		||(\func{T}  _{0}-\func{T}  _{\delta })\mu_{\delta }||_{w}\leq R(\delta) ^\zeta C \ \forall \delta \in [0,1); 
	\end{equation*}
	
	\item [(P2)] Suppose there is $M>0$ such that for all $\delta \in [0,1)$, it holds $$||\mu_{\delta}||_{s}\leq M;$$
	\item[(P3)] $\func{T} _{0}$ has exponential convergence to equilibrium with
	respect to the norms $||\cdot||_{s}$ and $||\cdot||_{w}$: there exists $0<\rho_2 <1$ and $C_2>0$ such that  $$\forall \ \mu \in \mathcal{V}_s: =\{\mu \in B_{s}: \mu(\Sigma)=0 \}$$ it holds $$||\func{T}^{n}_0 \mu||_{w} \leq \rho _2 ^n C_2 ||\mu||_{s};$$
	\item[(P4)] The iterates of the operators are uniformly bounded for the weak norm: there exists $M_2 >0$ such that for all $\delta \in [0,1)$,  all $n \in \mathbb{N}$, and all $\nu \in B_{s}$, it holds $||\func{T} _{\delta
	} ^{ n}\nu||_{w}\leq M_{2}||\nu||_{w}.$
\end{enumerate}A family of operators that satisfies (P1), (P2), (P3) and (P4) is called a \textbf{$(R(\delta), \zeta)$-family of operators}.
\end{definition}

The following Lemma \ref{dlogd} establishes a general and quantitative relation between the variation of the fixed points, $\{\mu_ \delta\}_{\delta \in [0,1)}$, of a $(R(\delta), \zeta)$-family of operators concerning the parameter $\delta$. It states that the function $\delta \mapsto \mu_{\delta }$, given by $$\delta \longmapsto \func{T}_{\delta } \longmapsto \mu_{\delta }, \ \ \delta \in [0,1)$$varies continuously at $0$, with respect to the norm $||\cdot||_w$, and provides an explicit bound for its modulus of continuity: $D_1 R(\delta)^\zeta \log \delta $, where $D_1\geq 0$.

\begin{lemma} [Quantitative stability for fixed points of operators]	\label{dlogd}
	Suppose $\{\func{T}_{\delta }\}_{\delta \in \left[0, 1 \right)}$ is a $(R(\delta),\zeta)$-family of operators, where $\mu_{0}$ is the unique fixed point of $\func{T}_{0}$ in $B_{w}$ and $%
	\mu_{\delta }$ is a fixed point of $\func{T} _{\delta }$. Then, there exist constants $D_1 < 0$ and $\delta _0 \in (0,1)$ such that for all $\delta \in [0,\delta _0)$, it holds
	
	\begin{equation*}
		||\mu_{\delta }-\mu_{0}||_{w}\leq D_1 R(\delta)^\zeta \log \delta.
	\end{equation*}
\end{lemma}

To prove Lemma \ref{dlogd}, we state a general result on the stability of fixed points. We will omit its proof, but the reader can find it for instance in \cite{GLU}, Lemma 12.1. 

Consider two operators $\func{T} _{0}$ and $\func{T}  _{\delta }$ preserving a normed space of signed measures $\mathcal{B\subseteq }\mathcal{SB}(X)$ with norm $||\cdot ||_{\mathcal{B}%
}$. Suppose that $f_{0},$ $f_{\delta }\in \mathcal{B}$ are fixed points of $\func{T}  _{0}$ and $\func{T}  _{\delta }$, respectively.

\begin{sublemma}
	\label{gen}Suppose that:
	
	\begin{enumerate}
		\item[a)] $||\func{T}  _{\delta }f_{\delta }-\func{T} _{0}f_{\delta }||_{\mathcal{B}}<\infty $;
		
		\item[b)] For all $i\geq 1$, $\func{T} _{0}^{ i}$ is continuous on $\mathcal{B}$: for each $ i \geq 1$, $\exists
		\,C_{i}~s.t.~\forall g\in \mathcal{B},~||\func{T}  _{0}^{ i}g||_{\mathcal{B}}\leq
		C_{i}||g||_{\mathcal{B}}.$
	\end{enumerate}
	
	Then, for each $N \geq 1$, it holds%
	\begin{equation*}
		||f_{\delta }-f_{0}||_{\mathcal{B}}\leq ||\func{T} _{0}^{ N}(f_{\delta }-f_{0})||_{%
			\mathcal{B}}+||\func{T}  _{\delta } f_{\delta }-\func{T}  _{0}f_{\delta }||_{\mathcal{B}%
		}\sum_{i\in \lbrack 0,N-1]}C_{i}.  
	\end{equation*}
\end{sublemma}

\begin{proof} (of Lemma \ref{dlogd})

	First, note that if $\delta \geq 0$ is small enough, then $\delta \leq -\delta \log {\delta} $. Moreover, $x -1 \leq \lfloor  x \rfloor$, for all $x \in \mathbb{R}$.

	By P1,%
	\begin{equation*}
		||\func{T}_{\delta }\mu_{\delta }-\func{T}_{0}\mu_{\delta }||_{w}\leq R(\delta) ^\zeta C
	\end{equation*}%
	(see Lemma \ref{gen}, item a) ) and P4 yields $C_{i}\leq M_{2}.$ 
	
	Hence, by Lemma \ref{gen} we have 
	\begin{equation*}
		||\mu_{\delta }-\mu_{0 }||_{w}\leq R(\delta) ^\zeta CM_{2}N+||\func{T}_{0}^{ N}(\mu_{0}-\mu_{\delta})||_{w}.
	\end{equation*}%
	By the exponential convergence to equilibrium of $\func{T}_{0}$ (P3), there exists $0<\rho _2 <1$ and $C_2 >0$ such that (recalling that
	by P2 $||(\mu_{\delta}-\mu_{0})||_{s}\leq 2M$)
	\begin{eqnarray*}
		||\func{T} _{0}^{ N}(\mu_{\delta }-\mu_{0})||_{w} &\leq &C_{2}\rho _{2}^{N}||(\mu_{\delta }-\mu_{0 })||_{s} \\
		&\leq &2C_{2}\rho _{2}^{N}M
	\end{eqnarray*}%
	hence%
	\begin{equation*}
		||\mu_{\delta }-\mu_{0}||_{w}\leq R(\delta) ^\zeta CM_{2}N+2C_{2}\rho _{2}^{N}M.
	\end{equation*}%
	Choosing $N=\left\lfloor \frac{\log \delta }{\log \rho _{2}}\right\rfloor $, we have%
	\begin{eqnarray*}
		||\mu_{\delta }-\mu_{0} ||_{w} &\leq &R(\delta) ^\zeta CM_{2}\left\lfloor \frac{%
			\log \delta }{\log \rho _{2}}\right\rfloor +2C_{2}\rho _{2}^{\left\lfloor 
			\frac{\log \delta }{\log \rho _{2}}\right\rfloor }M \\
		&\leq &R(\delta) ^\zeta \log \delta CM_{2}\frac{1}{\log \rho _{2}}+2C_{2}\rho _2 ^ {  \frac{\log \delta }{\log \rho _{2}} -1} M \\
		&\leq & R(\delta) ^\zeta \log \delta CM_{2}\frac{1}{\log \rho _{2}}+\frac{2C_{2}\rho _2 ^ { \frac{\log \delta }{\log \rho _{2}}} M}{\rho _2} \\
		&\leq & R(\delta) ^\zeta \log \delta CM_{2}\frac{1}{\log \rho _{2}}+\frac{2C_{2}\delta M}{\rho _2} \\
		&\leq & R(\delta) ^\zeta \log \delta CM_{2}\frac{1}{\log \rho _{2}}-\frac{2C_{2}\delta \log \delta M}{\rho _2}\\
		&\leq & R(\delta) ^\zeta \log \delta \left( \frac{CM_{2}}{\log \rho _{2}}-\frac{2C_{2} M}{\rho _2} \right).
		\notag
	\end{eqnarray*}
	We finish the proof by setting, $D_1= \frac{CM_{2}}{\log \rho _{2}}-\frac{2C_{2} M}{\rho _2}$.
\end{proof}

\section{Proof of Theorem \ref{belongss}}\label{sofkjsdkgfhksjfgd}

First, let us prove the existence and uniqueness of an $F$-invariant measure in $S^\infty$.

The following lemma \ref{kjdhkskjfkjskdjf} ensures the existence and uniqueness of an $F$-invariant measure that projects onto $m_1$. Since its proof is based on standard arguments (see \cite{AP}, for instance), we will omit it here.

\begin{lemma}\label{kjdhkskjfkjskdjf}
	There exists a unique measure $\mu_0$ on $M \times K$ such that for every continuous function $\psi \in C^0 (M \times K)$, it holds

	\begin{equation}
	\psi^+= \psi^-=\int {\psi}d\mu_0, 
	\end{equation}where $$\psi^- := \lim _{n \rightarrow \infty} {\int{\inf_{(\gamma, y) \in \gamma \times K} \psi \circ F^n (\gamma, y) }dm_1(\gamma)}$$ and  $$\psi^+ := \lim _{n \rightarrow \infty} {\int{\sup_{(\gamma, y) \in \gamma \times K} \psi \circ F^n (\gamma, y)}dm_1 (\gamma)}.$$  Moreover, the measure $\mu_0$ is $F$-invariant and $\pi_1{_\ast}\mu_0 = m_1$.
\end{lemma}

Let $\mu_{0}$ be the $F$-invariant measure such that $\pi_{1\ast }\mu_{0}=m_1$ (which exists by Lemma \ref{kjdhkskjfkjskdjf}), where $1$ is the unique $f$-invariant density in $H_\zeta$. Suppose that $g:K\longrightarrow \mathbb{R}$ is a $\zeta$-H\"older function such that $|g|_{\infty }\leq 1$ and $H_\zeta(g)\leq 1$. Then, it holds $\left\vert \int {g} d(\mu_{0}|_{\gamma })\right\vert \leq |g|_{\infty }\leq 1$. Hence, $\mu_{0}\in \mathcal{L}^{\infty }$. Since $\dfrac{\pi_{1*}\mu_0}{dm_1} \equiv 1$, we have $\mu_0 \in S^\infty$.

The uniqueness follows directly from Theorem \ref{5.8} since the difference between two probabilities ($\mu _1 - \mu_0$) is a zero-average signed measure.

\begin{definition}
	Let $F:\Sigma \longrightarrow \Sigma$ be a continuous map,  with $\Sigma=M\times K$ and $F(x,y)=(f(x), G(x,y))$, where $f:M\longrightarrow M$ and $G(x, \cdot ): K\longrightarrow K$ for all $x\in M$. We say that $E\subset \Sigma$ is an $(n,\varepsilon)-$spanning set if for every $(x_0, y_0)\in \Sigma$, there exists $(x_1,y_1)\in E$ such that, for all $j\in \{0,1,...,n-1\}$
	
	\begin{align*}
	d(F^{j}(x_0,y_0),F^{j}(x_1, y_1))&= d((f^{j}(x_0), G_{x_0}^{j}(y_0)),(f^{j}(x_1), G_{x_1}^{j}(y_1))\\
	& =d_1(f^{j}(x_0), f^{j}(x_1)) + d_2(G_{x_0}^{j}(y_0), G_{x_1}^{j}(y_1))\\
	& < \varepsilon,
	\end{align*}where $d_1$ and $d_2$ are the metrics on $M$ and $K$, respectively. For $\varphi \in C^{0}(M\times K, \mathbb{R})$(space of continuous functions), define the {\bf topological pressure} of $\varphi$ by
	$$
	P_{t}(F,\varphi) := \lim_{\varepsilon \to 0} \limsup_{n \to \infty}\dfrac{1}{n} \log \inf_{E \subset \Sigma} \biggl( \sum_{(x,y)\in E}\e ^{S_{n}\varphi(x,y)}\biggr)
	$$
	where $S_{n}(\varphi)(x,y):=\sum_{j=0}^{n-1} \varphi(F^{j}(x,y))=\sum^{n-1}_{j=0}\varphi(f^{j}(x), G^{j}_{x}(y))$, and the infinium is taken over all $(n,\varepsilon)-$ spanning subsets $E$ of $\Sigma$. 
\end{definition}
It is known that the variational principle holds.  That is,
\begin{equation}
\label{variaprinciple}
P_{t}(F,\varphi) = \sup_{\mu \in \mathcal{M}^{1}_{F}(M\times K)} \biggl\{h_{\mu}(F) + \int \varphi d\mu \biggr \}
\end{equation}
where $\mathcal{M}^{1}_{F}(M\times K)$ is the set of measures $\mu$ that are invariant by $F$ ($\mu \circ F^{-1}=\mu$). 
On the other hand, for a given $\varphi^{\ast}\in C^{0}(M,\mathbb{R})$ define the function
\[
\begin{array}{cccc}
\varphi\ : & \! M\times K & \! \longrightarrow
& \! \mathbb{R} \\
& \! (x,y) & \! \longmapsto
& \! \varphi(x,y):=\varphi^{\ast}(x).
\end{array}
\]
We have that $\varphi\in C^{0}(M\times K, \mathbb{R})$. Now, let $\mathcal{M}^{1}_{m_1}(M\times K)$ be the set of all probability measures $\mu$ on $M\times K$ such that
\[
\pi_{1\ast}\mu = \mu\circ \pi^{-1}_{1}=m_1. 
\]
Where $\pi_1:M \times K \rightarrow M$ stands for the first projection ($\pi_1(x,y)=x$). Theorem \ref{rok} (Rokhli's disintegration theorem) describes a disintegration $\left( \{\mu _{\gamma}\}_{\gamma }, m_1\right) $ of $\mu$. So that

\begin{align*}
\int_{M\times K} \varphi d\mu & = \int_M \int_K \varphi(\gamma,y)d\mu_\gamma(y) dm_1(\gamma)\\
& = \int_M \int_K \varphi^\ast(\gamma) d\mu_\gamma(y)dm_1(\gamma)\\
& = \int_M \varphi^\ast(\gamma) dm_1(\gamma) < \infty.
\end{align*}
If we consider an $(n,\varepsilon)$-spanning set $E\subset M\times K$, then by the metric $d$, $E^\ast =\{x\in M: (x,y)\in E\}$ is an $(n,\varepsilon)$- spanning set for the system $f:M\longrightarrow M$. Hence, by definition of topological pressure, we get
\begin{equation}
\label{equaPfPF1}
P_t(f, \varphi^{\ast})\leq P_{t}(F, \varphi).
\end{equation}

For the other inequality, we will use the following result (see \cite{PL}).
\begin{theorem}[Ledrappier-Walters Formula]
	\label{thLed-Walters}
	Let $\Hat{X}, X$ be compact metric spaces and let $\Hat{T}:\Hat{X}\longrightarrow \Hat{X}$, $T:X\longrightarrow X$ and $\Hat{\pi}:\Hat{X}\longrightarrow X$ be continuous maps such that $\Hat{\pi}$ is surjective and $\Hat{\pi}\circ \Hat{T} = T\circ \Hat{\pi}$. Then
	$$
	\sup_{\Hat{\nu}; \Hat{\pi}_{\ast}\Hat{\nu}=\nu}h_{\Hat{\nu}}(\Hat{T})= h_{\nu}(T) + \int h_{top}(\Hat{T}, \Hat{\pi}^{-1}(y)) d\nu(y).
	$$
\end{theorem}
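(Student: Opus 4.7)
The Ledrappier--Walters formula decomposes the entropy of an extension into base entropy plus integrated fiber topological entropy, so the plan is to prove the two matching inequalities in
\[
\sup_{\hat{\pi}_\ast\hat{\nu}=\nu} h_{\hat{\nu}}(\hat{T}) \;=\; h_{\nu}(T)+\int h_{top}(\hat{T},\hat{\pi}^{-1}(y))\,d\nu(y).
\]
The workhorse is the Abramov--Rokhlin relative entropy identity
\[
h_{\hat{\nu}}(\hat{T}) \;=\; h_{\nu}(T) \;+\; h_{\hat{\nu}}(\hat{T}\mid\hat{\pi}^{-1}\mathcal{B}_X),
\]
valid for every $\hat{T}$-invariant $\hat{\nu}$ with $\hat{\pi}_\ast\hat{\nu}=\nu$, where $\mathcal{B}_X$ is the Borel $\sigma$-algebra on $X$. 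Subtracting $h_\nu(T)$ reduces the task to
\[
\sup_{\hat{\pi}_\ast\hat{\nu}=\nu} h_{\hat{\nu}}\bigl(\hat{T}\mid\hat{\pi}^{-1}\mathcal{B}_X\bigr) \;=\; \int h_{top}(\hat{T},\hat{\pi}^{-1}(y))\,d\nu(y).
\]

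For the \textbf{upper bound}, fix such a $\hat{\nu}$ and disintegrate $\hat{\nu}=\int\hat{\nu}_y\,d\nu(y)$ along fibers. Given a finite Borel partition $\hat{\xi}$ of $\hat{X}$ whose atoms have diameter less than $\varepsilon$, the atoms of $\bigvee_{i=0}^{n-1}\hat{T}^{-i}\hat{\xi}$ meeting $\hat{\pi}^{-1}(y)$ are comparable in number to an $(n,\varepsilon)$-spanning count $r_n(y,\varepsilon)$ of the fiber, yielding
\[
H_{\hat{\nu}}\Bigl(\textstyle\bigvee_{i=0}^{n-1}\hat{T}^{-i}\hat{\xi}\,\Bigm|\,\hat{\pi}^{-1}\mathcal{B}_X\Bigr) \;\leq\; \int \log r_n(y,\varepsilon)\,d\nu(y).
\]
Dividing by $n$, taking $\limsup$, applying dominated convergence, and letting $\varepsilon\downarrow 0$ produces $h_{\hat{\nu}}(\hat{T}\mid\hat{\pi}^{-1}\mathcal{B}_X)\leq\int h_{top}(\hat{T},\hat{\pi}^{-1}(y))\,d\nu(y)$.

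For the \textbf{lower bound}, fix $\varepsilon>0$. For each $y\in X$ the fiber-wise variational principle produces a probability $\mu_y$ on $\hat{\pi}^{-1}(y)$ with $h_{\mu_y}(\hat{T}|_y)\geq h_{top}(\hat{T},\hat{\pi}^{-1}(y))-\varepsilon$, and Kuratowski--Ryll-Nardzewski furnishes a Borel measurable selection $y\mapsto\mu_y$. Set $\hat{\nu}_0(A):=\int\mu_y(A\cap\hat{\pi}^{-1}(y))\,d\nu(y)$ and average Ces\`aro-wise along the $\hat{T}$-orbit to extract a weak-$\ast$ limit $\hat{\nu}$; by $T$-invariance of $\nu$ this limit is $\hat{T}$-invariant and still satisfies $\hat{\pi}_\ast\hat{\nu}=\nu$. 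Applying Abramov--Rokhlin to $\hat{\nu}$ and identifying its conditional entropy with the integrated fiber entropy gives $h_{\hat{\nu}}(\hat{T})\geq h_\nu(T)+\int h_{top}(\hat{T},\hat{\pi}^{-1}(y))\,d\nu(y)-\varepsilon$, and letting $\varepsilon\downarrow 0$ concludes.

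\textbf{Main obstacle.} The technical heart is the lower bound: constructing Borel measurable near-equilibrium fiber measures and, crucially, preserving the fiber entropies through the Ces\`aro averaging, since $\hat{\mu}\mapsto h_{\hat{\mu}}(\hat{T})$ need not be upper semicontinuous. The standard workaround (following the original Ledrappier--Walters argument) replaces $\mu_y$ by empirical measures concentrated on $(n,\varepsilon)$-separated sets along fibers and extracts a limit via a diagonal argument in $n$ and $\varepsilon$, which requires careful bookkeeping to verify that the Abramov--Rokhlin decomposition of the resulting limit measure distributes its entropy back as the integrated fiber topological entropy.
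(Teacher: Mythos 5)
The paper does not prove this theorem; it is stated and then used as a black box (cited implicitly as the classical Ledrappier--Walters relativized variational principle, \emph{A relativised variational principle for continuous transformations}, J.\ London Math.\ Soc., 1977). There is therefore no proof in the paper against which to compare your attempt, and what you have written is best read as an independent sketch of a deep classical result.

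On its own terms your sketch follows the right global architecture (Abramov--Rokhlin reduction to a conditional entropy $h_{\hat{\nu}}(\hat{T}\mid\hat{\pi}^{-1}\mathcal{B}_X)$, upper bound by counting atoms of Bowen refinements along fibers, lower bound by constructing measures from well-chosen data on fibers), but the lower bound paragraph contains a real conceptual misstep. You invoke ``the fiber-wise variational principle'' to produce measures $\mu_y$ on $\hat{\pi}^{-1}(y)$ with $h_{\mu_y}(\hat{T}|_y)\geq h_{top}(\hat{T},\hat{\pi}^{-1}(y))-\varepsilon$. But the fibers are \emph{not} $\hat{T}$-invariant: the commutation relation $\hat{\pi}\circ\hat{T}=T\circ\hat{\pi}$ sends $\hat{\pi}^{-1}(y)$ onto $\hat{\pi}^{-1}(Ty)$, so there is no dynamical system $\hat{T}|_y$ on a single fiber, no fiber-wise invariant measures, and hence no fiber-wise variational principle in this sense; the quantity $h_{\mu_y}(\hat{T}|_y)$ is undefined. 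You half-acknowledge this in your ``main obstacle'' remark by saying that the actual argument replaces the $\mu_y$ by empirical measures on $(n,\varepsilon)$-separated subsets of fibers, and that is indeed the core of the Ledrappier--Walters construction; but then that should be presented as the argument rather than a ``workaround,'' because the first-pass version you propose does not make sense even before one worries about upper semicontinuity of entropy. Also note that the Abramov--Rokhlin identity you rely on is itself a nontrivial ingredient (it requires the factor $\sigma$-algebra to be invariant and some standard but careful conditional-entropy bookkeeping), so citing it explicitly rather than treating it as folklore would be appropriate. For the purposes of the paper under discussion, none of this matters: citing Ledrappier--Walters (1977) is the intended treatment.
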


Since $G(x, \cdot):K\longrightarrow K$ is a uniform contraction, for every $x\in M$, we have  $h_{top}(F, \pi_1^{-1}(x))=0$ for every $x\in M$. Then, by Theorem \ref{thLed-Walters}, we obtain
\begin{equation}
\label{equahfHF}
h_{\mu}(F)=h_{m_1}(f)
\end{equation}
for every $m_1\in \mathcal{M}_{f}(M)$ and $\mu\in \mathcal{M}_{F}(M\times K)$ such that $\pi_{1\ast}\mu=m_1$. Therefore, by (\ref{variaprinciple}) and (\ref{equahfHF}) we get

\begin{equation}
\label{equaPFpf2}
P_{t}(F,\varphi)\leq P_{t}(f,\varphi^\ast).
\end{equation}Combining (\ref{equaPfPF1}) and (\ref{equaPFpf2}) we get
\begin{equation}
\label{equaPFpf3}
P_{t}(F,\varphi)= P_{t}(f,\varphi^\ast).
\end{equation}

\begin{proposition}
	\label{propestadoequil}
	The measure $m_1\in \mathcal{M}_{f}(M)$ is an equilibrium state for $(f, \varphi^\ast)$, if and only if, $\mu\in \mathcal{M}_{F}(M\times K)$ such that $m_1=\pi_{1\ast} \mu$, is an equilibrium state for $(F,\varphi)$. Moreover, if $m_1$ is the unique equilibrium state, then $\mu$ is unique.

\end{proposition}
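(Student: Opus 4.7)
The plan is to leverage the three ingredients that have already been assembled in the excerpt immediately preceding the proposition: the pressure identity $P_t(F,\varphi) = P_t(f,\varphi^*)$ of equation (\ref{equaPFpf3}), the Ledrappier--Walters entropy identity $h_\mu(F) = h_{m_1}(f)$ of equation (\ref{equahfHF}) (valid whenever $\pi_{x*}\mu = m_1$), and the elementary disintegration computation $\int \varphi\, d\mu = \int \varphi^*\, dm_1$ that was carried out right before (\ref{equaPfPF1}), which holds for any $\mu$ projecting onto $m_1$ because $\varphi(x,y)$ depends only on $x$.

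For the forward direction, I would assume $m_1$ is an equilibrium state for $(f,\varphi^*)$ and take any $\mu \in \mathcal{M}_F^1(M\times K)$ with $\pi_{x*}\mu = m_1$. Combining the three identities above yields
\begin{equation*}
h_\mu(F) + \int \varphi\, d\mu \;=\; h_{m_1}(f) + \int \varphi^*\, dm_1 \;=\; P_t(f,\varphi^*) \;=\; P_t(F,\varphi),
\end{equation*}
so $\mu$ attains the supremum in the variational principle (\ref{variaprinciple}), i.e.\ it is an equilibrium state for $(F,\varphi)$. For the converse direction, I would start with an equilibrium state $\mu$ for $(F,\varphi)$, observe that $\pi_{x*}\mu$ is automatically $f$-invariant since $f \circ \pi_x = \pi_x \circ F$, set $m_1 := \pi_{x*}\mu$, and run the same chain of equalities in reverse to deduce $h_{m_1}(f) + \int \varphi^*\, dm_1 = P_t(f,\varphi^*)$.

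For the uniqueness assertion, suppose $m_1$ is the unique equilibrium state for $(f,\varphi^*)$ and let $\mu'$ be any equilibrium state for $(F,\varphi)$. By the converse direction just proved, $\pi_{x*}\mu'$ is an equilibrium state for $(f,\varphi^*)$, so $\pi_{x*}\mu' = m_1$ by hypothesis. I would then invoke Lemma \ref{kjdhkskjfkjskdjf}, which provides a \emph{unique} $F$-invariant probability measure whose $x$-marginal is $m_1$; hence $\mu' = \mu$.

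The argument is essentially a bookkeeping exercise that threads the variational principle through the already-established pressure equality, entropy lift, and marginal/disintegration identity. The only delicate point I anticipate is the uniqueness half: one must verify that the $\mu$ produced in the statement is indeed the same measure as the one singled out by Lemma \ref{kjdhkskjfkjskdjf}, which is immediate once one notes that both are $F$-invariant and project to $m_1$, so no separate argument (e.g.\ via Theorem \ref{belongss} or the convergence-to-equilibrium machinery) is needed here.
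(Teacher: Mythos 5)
Your proposal is correct and follows essentially the same route as the paper's own (extremely terse) proof: the paper simply cites (\ref{equahfHF}), (\ref{equaPFpf3}), and Lemma \ref{kjdhkskjfkjskdjf}, and your argument is precisely the bookkeeping that threads those three facts through the variational principle (\ref{variaprinciple}) together with the marginal identity $\int\varphi\,d\mu=\int\varphi^*\,dm_1$. The uniqueness step via the converse direction and Lemma \ref{kjdhkskjfkjskdjf} also matches the paper's intent.
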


\begin{proof} (of Theorem \ref{belongss})
The proof of the theorem follows from (\ref{equahfHF}) and (\ref{equaPFpf3}). For the second part, it is a consequence of Lemma \ref{kjdhkskjfkjskdjf}.
\end{proof}

\section{Proof of Theorem  \ref{htyttigu} and Corollary \ref{htyttigui}}\label{kkdjfkshfdsdfsttr}
Before to establish Theorem \ref{htyttigu}, we need to prove the following Lemma \ref{rrr}.

\begin{lemma}\label{rrr}
	Let $\{F_\delta\}_{\delta \in [0,1)}$ be an admissible $R(\delta)$-perturbation and let $\{\func{F_\delta{_*}}\}_{\delta \in [0,1)}$ be the induced family of transfer operators. Then, $\{\func {F_\delta{_*}}\}_{\delta \in [0,1)}$ is an $(R(\delta),\zeta)$-family of operators with weak space $(\mathcal{L}^{\infty}, || \cdot ||_\infty)$ and strong space $(S^\infty,||\cdot||_{S^\infty})$.
\end{lemma}

\begin{proof}
		We need to prove that $\{F_\delta\}_{\delta \in [0,1)}$ satisfies P1, P2, P3 and P4. To prove P2, note that, by (A1) and equation (\ref{weakcontral11234}) we have 
	\begin{eqnarray*}
		||\func{F {_\delta} {_*} ^n}  \mu_{\delta }||_{S^\infty} &=& |\func {P}_{f_{\delta }} ^n \phi _1 |_{\zeta} + ||\func{F_\delta{_\ast }} ^n \mu||_{\infty}  \\&\leq & D \lambda ^n |\phi _1|_{\zeta} + D|\phi _1|_\infty + || \mu||_{\infty} \\&\leq & D \lambda ^n ||\mu||_{S^\infty} + (D +1) || \mu||_{\infty}.
	\end{eqnarray*}
	Therefore, if $\mu_{\delta }$ is a fixed probability measure for the operator $\func{F_\delta {_*}}$, by the above inequality, we get P2 with $M=D+1$.
	
	A direct application of Theorem \ref{thshgf} and Lemma \ref{UF2ass} gives P1. The items P3 and P4 follow, respectively, from proposition \ref{5.8}, equation (\ref{weakcontral11234}) applied to each $F_\delta$.
\end{proof}	
\begin{proof}(of Theorem \ref{htyttigu} and Corollary \ref{htyttigui})

We directly apply the above results together with Theorem \ref{dlogd}, and the proof of Theorem \ref{htyttigu} is completed. The proof of Corollary \ref{htyttigui} is straightforward.
\end{proof}

\end{document}